\def\disp{\displaystyle}
\def\hat{\widehat}\def\Hat{\widehat}
\def\Bar{\overline}
\def\la{\langle}
\def\ra{\rangle}
\def\R{\mathbb{R}}
\newtheorem{theorem}{Theorem}[section]
\newtheorem{lemma}[theorem]{Lemma}
\newtheorem{corollary}[theorem]{Corollary}
\newtheorem{proposition}[theorem]{Proposition}
\newtheorem{definition}[theorem]{Definition}
\newtheorem{assumption}[theorem]{Assumption}
\theoremstyle{plain}
\begin{document}

\title{Extended SQP Methods in Nonsmooth Difference Programming  Applied to Problems with Variational Inequality Constraints}

\author{
  Boris S. Mordukhovich\thanks{Department of Mathematics \& Center for Artificial Intelligence and Data Science, Wayne State University, Detroit, MI 48202, USA. Email: \texttt{aa1086@wayne.edu}. Research of this author was partly supported by the US National Science Foundation under grant DMS-2204519 and by the Australian Research Council under Discovery Project DP250101112.} 
  \and
  Yixia Song\thanks{Department of Mathematics, Southern University of Science and Technology, Shenzhen, China. Email: \texttt{12131232@mail.sustech.edu.cn}.}
  \and
  Shangzhi Zeng\thanks{National Center for Applied Mathematics Shenzhen \& Department of Mathematics, Southern University of Science and Technology, Shenzhen, China. Email: \texttt{zengsz@sustech.edu.cn}. Research of this author was supported by the National Natural Science Foundation of China (12501429) and by the Shenzhen Fundamental Research Program (20250530150024003).}
  \and
  Jin Zhang\thanks{Corresponding author. \quad Department of Mathematics \& National Center for Applied Mathematics Shenzhen, Southern University of Science and Technology, Shenzhen, China. Email: \texttt{zhangj9@sustech.edu.cn}. Research of this author was supported by the National Key R\&D Program of China (2023YFA1011400).}
}

\date{}

\maketitle

\begin{abstract}
This paper explores a new class of constrained difference programming problems, where the objective and constraints are formulated as differences of functions, without requiring their convexity. To investigate such problems, novel variants of the extended sequential quadratic method are introduced. These algorithms iteratively solve strongly convex quadratic subproblems constructed via linear approximations of the given data by using their gradients and subgradients. The convergence of the proposed methods is rigorously analyzed by employing, in particular, the Polyak-\L ojasiewicz-Kurdyka property that ensures global convergence for various classes of functions in the problem formulation, e.g., semialgebraic ones. The original framework is further extended to address difference programming problems with variational inequality (VI) constraints. By reformulating VI constraints via regularized gap functions, such problems are naturally embedded into constrained difference programming that leads us to direct applications of the proposed algorithms. Numerical experiments for the class of continuous network design problems demonstrate the efficiency of the new methods.

\vspace{5pt}

\noindent\textbf{Keywords}\quad  Difference programming, Variational inequalities, 
Extended sequential quadratic methods, Polyak-\L ojasiewicz-Kurdyka conditions, Regularized gap functions, Continuous network design problems
\end{abstract}

\section{Introduction}\label{intro}

{\em Difference-of-convex} (DC) programming has emerged as a powerful framework for nonconvex and nonsmooth optimization owing to its ability of modeling complex structures while maintaining computational tractability; see, e.g., \cite{le2005,tao1997} and the references therein. In this work, we extend this paradigm by investigating a class of constrained {\em difference programming} problems formulated as follows:
\begin{equation}\label{difference-programming}
 \begin{aligned}
 &\min_{x \in X} \quad \varphi_0(x) := g_0(x) - h_0(x) \\
 &\text{subject to} \quad {\varphi_i(x) := g_i(x) - h_i(x) \leq 0, \quad i\in\mathcal{I},}
 \end{aligned}
 \end{equation}
where $\mathcal{I}$ is a finite set of indices, \( X \) is a nonempty, closed, and convex subset of \( \mathbb{R}^n \). The functions \( g_i: \mathbb{R}^n \to \mathbb{R} \) for \( i\in\{0\} \cup\mathcal{I}\)  are assumed to be continuously differentiable with locally Lipschitzian gradients (i.e., of class ${\cal C}^{1,1}$) on \( X \), while the functions \( h_i:\mathbb{R}^n \to\Bar\R:= \left(-\infty, \infty\right] \) for \( i \in \{0\} \cup \mathcal{I} \) are locally Lipschitz continuous and prox-regular over \( X \); see Section~\ref{sec:preliminaries}. Additionally, we assume that the objective function $\varphi_0$ is bounded from  below by a constant $m_{\varphi_0}$ over $X$. The primary objective of this study is to develop efficient numerical methods for solving \eqref{difference-programming}, establish a rigorous convergence analysis for the proposed algorithms, and provide valuable applications.

Recall that several classical algorithms exist to solve general constrained nonlinear programming (NLP). Among them, we mention sequential quadratic programming (SQP) methods, interior point methods, and augmented Lagrangian methods; see, e.g.,  \cite{bertsekas2016nonlinear,nocedal2006numerical} for more details.
The difference programming model \eqref{difference-programming} can be viewed as a generalization of DC programming, which has been a well-established topic in optimization with a variety of practical applications; see, in particular, \cite{thi2014dc,av20,ferreira,pang2017computing,ye2021difference} and the references therein among various developments on DC-type algorithms.

The unconstrained case of \eqref{difference-programming}, where $\varphi_i(x)$ for $i \in \mathcal{I}$ are absent, has been extensively investigated in \cite{aragon2023coderivative}, where the term ``difference programming" has been coined. In that work, the authors propose a novel Newton-type algorithm, RCSN, which effectively exploits second-order information via generalized differentiation techniques to solve such problems. In our work, we address the general setting of \eqref{difference-programming}, where the constraint functions $\varphi_i$ for $i\in\mathcal{I}$ are also expressed as differences of functions. The presence of such a structure in both objective and the constraint functions constitute additional analytical and computational challenges, which require the rigorous treatment of feasibility and stationarity conditions.

To develop an efficient numerical approach for solving \eqref{difference-programming}, we begin with considering SQP-type algorithms in the setting of smooth NLP problems. Methods of this type include solving a sequence of strongly convex quadratic subproblems to approximate the original one \cite{boggs1995sequential,lawrence2001}.
In particular, the {\em extended sequential quadratic method} (ESQM), introduced in \cite{auslender2013extended} for NLPs, serves as an inspiration for one of our algorithmic designs,  called CDP-ESQM, to solve \eqref{difference-programming}. Here we do not directly apply ESQM, but rather incorporate certain key ideas  from it to develop an iterative framework tailored for constrained difference programming \eqref{difference-programming}. Specifically, at each iteration we construct a strongly convex quadratic programming subproblem, where the problem's structural properties are incorporated through linear approximations derived from the gradients of \( g_i \) and limiting/Mordukhovich subgradients of \( h_i \) for \( i \in \{0\} \cup \mathcal{I} \). Under rather mild assumptions, we establish convergence properties of the proposed CDP-ESQM. {Moreover, under the assumption that a limiting subdifferential version of the Polyak-\L ojasiewicz-Kurdyka property holds for a suitably designed merit function, we establish global convergence results for CDP-ESQM. Notably, while CDP-ESQM utilizes a backtracking line search, our global convergence results do not rely on the assumption of a unit step size.}

Subsequently, we consider a class of {\em difference programs with variational inequality $(VI)$ constraints}.
A variational inequality problem seeks a vector $x^*$ such that a given mapping $F(x)$ satisfies
\begin{equation}\label{vi}
\langle F(x^*),x-x^* \rangle \geq 0\;\mbox{ for all }\;x\in \Omega,
\end{equation}
where $\Omega$ is a feasible set, which often models equilibrium problems in diverse fields of science and technology including machine learning, operations research, economics, optimization, etc.; see \cite{FacchineiPang2003,gidel2019,kinderlehrer2000variational,nagurney1993network}. VIs serve as a unifying tool to describe equilibrium conditions in applications to noncooperative games, market equilibrium models, contact mechanics, and generative adversarial networks. In various real-world models, decision making involves not only satisfying a VI constraint but also optimizing an objective function that leads us to problems of VI-constrained programming. Theoretical research on VI-constrained programming mostly focuses on the development of constraint qualifications and necessary optimality conditions as in \cite{ye2000constraint} and on the analysis of value function stability in parametric settings; see, e.g., \cite{DuttaLafhimZemkohoZhou2025,lucet2001sensitivity}. 
These studies, along with other early contributions, provide fundamental insights into the theoretical properties of VI-constrained programming.  In real-world applications, VI-constrained programming has been widely employed in various fields such as optimal shape design, Stackelberg-Cournot-Nash equilibria traffic equilibrium, etc.; see \cite{kocvara1992nondifferentiable,samadi2025improved}. Despite the goal of solving VI-constrained programming problems, the existing research has predominantly focused on a special case where the feasible set $\Omega$ in VI \eqref{vi} is a cone. Under this condition, problems of VI-constrained programming reduce to {\em mathematical programs with complementarity constraints} (MPCCs); see, e.g., \cite[Proposition~1.1.3]{FacchineiPang2003}. The reader may consult with \cite{guo2015solving,outrata,scholtes2001convergence,steffensen2010new} for various numerical algorithms developed to solve MPCCs.

Our aim in this part of the research is developing efficient algorithms to solve problems of difference programming with variational inequality constraints of type \eqref{vi}. To accomplish the goal, 
we build upon the single-level reformulation approach introduced in \cite{marcotte1996exact} and employ employ the {\em regularized gap function} for VI proposed in \cite{fukushima1992equivalent}.
This allows us to reformulate the VI constraint as an inequality constraint given by the difference of two (nonconvex) functions with reducing therefore the VI-constrained problem to the difference programming form \eqref{difference-programming}. Thus the proposed ESQM methods are directly applicable to the VI-constrained difference programming problems under consideration. The results established for such problems by using CDP-ESQM are applied to {\em continuous network design problems}. For this class of practical models, we conduct numerical experiments and compare the obtained numerical solutions with those computed by other well-recognized methods.\vspace*{0.03in}

The rest of the paper is organized as follows. 
Section~\ref{sec:preliminaries} presents the basic notation and needed preliminaries from variational analysis and nonsmooth optimization. In Section~\ref{sec:alg}, we design and provide convergence analysis of the proposed ESQM version for solving constrained difference programming problems (CDP-ESQM). Section~\ref{sec:plk} is devoted to proving the global convergence of CDP-ECQM, assuming a limiting subdifferential version of the Polyak-\L ojasiewicz-Kurdyka property holds for a specifically designed merit function, $E_{p}$ (defined subsequently in \eqref{eq: E definition}).
In Section~\ref{sec:VI-constrained}, we consider VI-constrained models of difference programming and provide adaptations of the CDP-ESQM algorithm to solve such problems by reducing them to the constrained difference programs \eqref{difference-programming} via the gap function regularization. Applying the developed algorithm to solving continuous network design problems  and numerical experiments are given in Section~\ref{sec:num}. The final Section~\ref{conc} summarizes the main achievements of the paper and discusses some topics of our future research.

\section{Preliminaries from Variational Analysis and Nonsmooth Optimization} \label{sec:preliminaries}
\setcounter{equation}{0}

Throughout this paper, we deal with finite-dimensional Euclidean spaces $\mathbb R^n$ and use the standard notation and terminology of variational analysis and generalized differentiation;  see, e.g., \cite{mordukhovich2018variational,rockafellar1998variational}. 
Let $\mathbb{B}_r(x)$ represent the closed ball centered at $x \in \mathbb{R}^n$
with radius $r > 0$, and let $\mathbb{B}_r$ denote the closed ball centered at the origin with the same radius, while the unit closed ball centered at the origin is denoted by $\mathbb{B}$.
The set of natural numbers is labeled as  $\mathbb{N} := \{1, 2, \dots\}$. 
The nonnegative and positive orthants in $\mathbb{R}^n$ are denoted by $\mathbb{R}^n_+$ and $\mathbb{R}^n_{++}$, respectively. We use the symbol $\langle \cdot, \cdot \rangle$ for the inner product and the symbol $\|\cdot\|$ for the Euclidean norm in $\mathbb{R}^n$. For a vector \( x \in \mathbb{R}^n \) and a closed set \( D \subset \mathbb{R}^n \), the distance from \( x \) to \( D \) is defined as \(\mathrm{dist}(x, D): = \min_{x' \in D} \|x - x'\|\). The indicator function of \( D \) is denoted by \(\delta_D\), and the normal cone associated with a convex set \( D \) at \( x\in D \) is given by 
\begin{equation}\label{nc-conv}
\mathcal{N}_D(x) := \big\{ v \in \mathbb{R}^n ~\big|~ \langle v, x' - x\rangle \le 0\;\mbox{  for all }\;x' \in D\big\}
\end{equation}
with the convention $\mathcal{N}_D(x) :=\varnothing$ used when $x \notin D$. The Euclidean projection onto \( D \) is signified by \(\mathrm{Proj}_D\) and the convex hull of $D$ by $\mathrm{co}\, D$. The Cartesian product of two sets \( X \) and \( Y \) is denoted by \( X \times Y \).

For any proper lower semicontinuous (l.s.c.) function  \( f: \mathbb{R}^n \to\Bar{\mathbb{R}}\), the associated domain and epigraph are given by $\text{dom}(f):=\left\{x\in\mathbb{R}^n ~|~ f(x)<\infty\right\}$ and $\text{epi}(f):=\left\{(x,\alpha)\in\mathbb{R}^n\times\mathbb{R} ~|~f(x)\leq\alpha\right\}$, respectively. The main subdifferential construction used in this paper is the (Mordukhovich) {\em limiting subdifferential} of $f$ at 
$\bar x\in{\rm dom}(f)$ defined as follows
\begin{equation}\label{sub}
\partial f(\bar{x}): = \left\{ v \in \mathbb{R}^n \;\middle|\; \exists x_k \xrightarrow{f} \bar{x}, \; \exists v_k \in \hat{\partial} f(x_k), \; v_k \to v \right\},
\end{equation}
where \( x_k \xrightarrow{f} \bar{x} \) indicates that \( x_k \to \bar{x} \) and \( f(x_k) \to f(\bar{x}) \) as \( k \to \infty \), and where $\hat{\partial} f(x)$ stands for the (Fr\'echet) {\em regular subdifferential} of $f$ at $x\in{\rm dom}\, f$ given by
\begin{equation}\label{rsub}
\hat\partial f(x):=\Big\{v\in\mathbb{R}^n\;\Big|\;\mathop{\lim\inf}\limits_{u\to x}\frac{f(u)-f(x)-\la v,u-x\ra}{\|u-x\|}\ge 0\Big\}.
\end{equation}
If \( \bar{x} \notin \mathrm{dom}(f)\), we put \(\partial f(\bar{x}): = \emptyset\). When $f$ is a convex function, both subdifferentials \eqref{sub} and \eqref{rsub} reduce to the classical subdifferential of convex analysis, but they can be very different even for simple nonconvex functions; see, e,g., $f(x):=-|x|$, $x\in\mathbb R$, where $\partial f(0)=\{-1,1\}$ while $\hat\partial f(0)=\emptyset$.  Note that, despite the nonconvex values of the limiting subdifferential, it enjoys a {\em full calculus} based on variational and extremal principles of variational analysis; see the books \cite{mordukhovich2006variational,mordukhovich2018variational,rockafellar1998variational} and the references therein.\vspace*{0.05in}

We say that a function $f\colon\mathbb{R}^n\to\Bar{\mathbb{R}}$ is {\em lower/subdifferentially regular} at $\bar x\in\rm{dom}\,f$ if we have $\partial f(\bar x)=\Hat\partial f(\bar x)$. As shown in the aforementioned books, various important classes of extended-real-valued functions possess this property. Many of such functions are prox-regular in the sense defined below.

\begin{definition}\label{definition prox-regular}
A function $f:\mathbb{R}^n\to\Bar{\mathbb{R}}$ is {\em prox-regular} at $\bar x \in \text{dom}(f)$ for $\bar v \in \partial f(\bar x)$ if it is l.s.c.\ around $\bar x$ and there exist $\varepsilon > 0$ and $r \geq 0$ such that
\begin{equation*}
\begin{aligned}
f(x') \geq f(x) + \langle v, x' - x \rangle - \frac{r}{2} \|x' - x\|^2
\end{aligned}
\end{equation*}
whenever $x, x' \in \mathbb{B}_\varepsilon(\bar x)$ with $f(x) \leq f(\bar x) + \varepsilon$ and $v \in \partial f(x) \cap \mathbb{B}_\varepsilon(\bar v)$. If the latter condition holds for all $\bar v \in \partial f(\bar x)$, $f$ is said to be prox-regular at $\bar x$. Given a set $X \subset \mathbb{R}^n$, the function $f$ is called  prox-regular on $X$ if it is prox-regular at every point of $X$.   
\end{definition}

The class of prox-regular functions has been extensively studied in modern variational analysis with a variety of applications. This class encompasses, in particular, convex and ${\cal C}^{1,1}$ functions, strongly amenable functions, lower-$\mathcal{C}^2$ functions, etc.; see \cite{rockafellar1998variational} for more details. It follows from \cite[Theorem 5.2]{Clarke1995} that any prox-regular and locally Lipschitzian function around $\bar x$, is subdifferentially regular at this point.\vspace*{0.05in}

Next we recall the notion of upper directional derivative for extended-real-valued functions.

\begin{definition}\label{definition upper directional derivative}
For a function $f:\mathbb{R}^n\to (-\infty,\infty ]$ and a point $\bar x\in\text{dom}(f)$, the {\em upper directional derivative} of $f$ at $\bar x$ in direction $\bar d\in\mathbb{R}^n$ is defined by
\begin{equation*}
\begin{aligned}
f'(\bar x;\bar d):=\limsup\limits_{\substack{t\to0^+\\d\to\bar d}}\frac{f(\bar x+td)-f(\bar x)}{t}.
\end{aligned}
\end{equation*}
\end{definition}
It is easy to see that if $f$ is locally Lipschitzian around $\bar x$, the expression for $f'(\bar x;\bar d)$ simplifies to
\begin{equation}
\begin{aligned}\label{dirder}
f'(\bar x;\bar d)=\limsup\limits_{t\to0^+}\frac{f(\bar x+t\bar d)-f(\bar x)}{t}.
\end{aligned}
\end{equation}

The following useful result provides a calculus rule for the upper directional derivative of maxima over finitely many locally Lipschitzian functions.

\begin{lemma}\label{max rule}
Let \( f_i: \mathbb{R}^n \to (-\infty,\infty ] \), \( i \in \mathcal{I} \), be a finite collection of proper functions, and let \( \bar{x} \in \bigcap_{i \in \mathcal{I}} \text{\rm dom}(f_i) \). Suppose that \( f_i \) is locally Lipschitzian around this point for all \( i \in \mathcal{I} \). Then the maximum function  
\( f = \max_{i \in \mathcal{I}} f_i \) is also locally Lipschitzian around \( \bar{x} \), and for any \( d \in \mathbb{R}^n \) we have 
\begin{equation}\label{max rule formula}
\begin{aligned}
f'(\bar{x}; d) = \max_{i \in \mathcal{I}_{\max}(\bar{x})} \left\{ (f_i)'(\bar{x}; d) \right\} \leq 
\max_{i \in \mathcal{I}} \left\{ f_i(\bar{x}) + (f_i)'(\bar{x}; d) \right\} - f(\bar{x}),
\end{aligned}
\end{equation}  
where \( \mathcal{I}_{\max}(\bar{x}) := \left\{ i \in \mathcal{I} \mid f_i(\bar{x}) = f(\bar{x}) \right\} \).  
In particular, it holds for \( \mathcal{I} = \{1, 2\} \) and \( f_2 \equiv 0 \) that
\begin{equation*}
\begin{aligned}
\left( \max\{f_1, 0\} \right)'(\bar{x}; d) \leq \max\{ f_1(\bar x) + (f_1)'(\bar{x}; d), 0 \} - \max\{ f_1(\bar x), 0 \}.
\end{aligned}
\end{equation*}
\end{lemma}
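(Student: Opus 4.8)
The plan is to work throughout with the Lipschitzian simplification \eqref{dirder}, so that every directional derivative is read off the single‑variable quotient along the fixed direction $d$. First I would record that $f=\max_{i\in\mathcal{I}}f_i$ is locally Lipschitzian: on a common neighborhood of $\bar x$ where each $f_i$ is Lipschitz with constant $L_i$, the elementary bound $|\max_i a_i-\max_i b_i|\le\max_i|a_i-b_i|$ shows $f$ is Lipschitz there with constant $\max_{i\in\mathcal{I}}L_i$. This legitimizes using \eqref{dirder} simultaneously for $f$ and for each $f_i$.

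The heart of the argument is the equality $f'(\bar x;d)=\max_{i\in\mathcal{I}_{\max}(\bar x)}(f_i)'(\bar x;d)$, which I would split into two inequalities. For ``$\ge$'', fix $i\in\mathcal{I}_{\max}(\bar x)$; since $f_i(\bar x)=f(\bar x)$ and $f(\bar x+td)\ge f_i(\bar x+td)$, the difference quotient for $f$ dominates that for $f_i$, and passing to the $\limsup$ yields $f'(\bar x;d)\ge(f_i)'(\bar x;d)$, hence the bound after maximizing over active indices. For ``$\le$'', the key localization is that each inactive index satisfies $f_i(\bar x)<f(\bar x)$; as there are finitely many $f_i$, all continuous, there is $\delta>0$ with $f_i(\bar x+td)<f(\bar x+td)$ for every inactive $i$ and all $0<t<\delta$, so that $f(\bar x+td)=\max_{i\in\mathcal{I}_{\max}(\bar x)}f_i(\bar x+td)$ on that range. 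Substituting $f_i(\bar x)=f(\bar x)$ converts the quotient for $f$ into $\max_{i\in\mathcal{I}_{\max}(\bar x)}$ of the quotients for the $f_i$, and I would conclude via the interchange $\limsup_{t\to0^+}\max_i=\max_i\limsup_{t\to0^+}$, which is valid precisely because $\mathcal{I}_{\max}(\bar x)$ is finite.

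Granting the equality, the displayed inequality \eqref{max rule formula} is immediate: for $i\in\mathcal{I}_{\max}(\bar x)$ the identity $f_i(\bar x)=f(\bar x)$ lets me rewrite $(f_i)'(\bar x;d)=f_i(\bar x)+(f_i)'(\bar x;d)-f(\bar x)$, whence the maximum over $\mathcal{I}_{\max}(\bar x)$ is bounded above by the same expression maximized over the larger set $\mathcal{I}$. The particular case follows by taking $\mathcal{I}=\{1,2\}$ with $f_2\equiv0$, so that $f_2(\bar x)=(f_2)'(\bar x;d)=0$ and $f(\bar x)=\max\{f_1(\bar x),0\}$.

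The step I expect to be the main obstacle is the ``$\le$'' direction, and within it two points demand care: the continuity localization that isolates the active indices for small $t>0$, and the $\limsup$/$\max$ interchange over the finite active set. The latter is where finiteness of $\mathcal{I}$ is genuinely used—along a sequence $t_k\to0^+$ realizing the left‑hand $\limsup$, some active index attains the maximum infinitely often, and restricting to that subsequence delivers the reverse inequality.
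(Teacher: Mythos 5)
Your proof is correct and follows essentially the same route as the paper's: both hinge on the localization $f(\bar x+td)=\max_{i\in\mathcal I_{\max}(\bar x)}f_i(\bar x+td)$ for small $t>0$ followed by the interchange of $\limsup_{t\to0^+}$ with the maximum over the finite active set. The only cosmetic differences are that the paper obtains the localization by citing the argument of Beck's Theorem~3.24 whereas you prove it directly, and you split the equality into two inequalities where the paper derives it in one step from the same identity.
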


\begin{proof} \, It follows directly from the definition that \( f \) is locally Lipschitzian around \( \bar{x} \). Using the continuity of \( f_i \) at $\bar{x}$ and applying the same argument as in the proof of \cite[Theorem~3.24]{beck2017first} yield the existence of \( \varepsilon > 0 \) such that for any \( t \in (0, \varepsilon) \) we get the equality 
\begin{equation}\label{max1}
\begin{aligned}
\frac{f(\bar{x} + t d) - f(\bar{x})}{t} 
= \max_{i \in \mathcal{I}_{\max}(\bar{x})} \left\{\frac{f_i(\bar{x} + t d) - f_i(\bar{x})}{t} \right\}.
\end{aligned}
\end{equation}
Taking the limit as \( t \to 0^+ \) in \eqref{max1} brings us to the relationships
\begin{equation*}
\begin{aligned}
f'(\bar{x}; d) = & \limsup_{t \to 0^+} \frac{f(\bar{x} + t d) - f(\bar{x})}{t} \\
= & \limsup_{t \to 0^+} \max_{i \in \mathcal{I}_{\max}(\bar{x})} \left\{ \frac{f_i(\bar{x} + t d) - f_i(\bar{x})}{t} \right\} \\
= & \max_{i \in \mathcal{I}_{\max}(\bar{x})} \left\{ \limsup_{t \to 0^+} \frac{f_i(\bar{x} + t d) - f_i(\bar{x})}{t} \right\}\\
= &  \max_{i \in \mathcal{I}_{\max}(\bar{x})}  \left\{ (f_i)'(\bar{x}; d) \right\}.
\end{aligned}
\end{equation*}
Remembering that $f(\bar{x}) = f_i(\bar{x})$ for any $i \in \mathcal{I}_{\max}(\bar{x})$ tells us that
\begin{equation*}
\max_{i \in \mathcal{I}_{\max}(\bar{x})}  \left\{ (f_i)'(\bar{x}; d) \right\} =  \max_{i \in \mathcal{I}_{\max}(\bar{x})} \left\{ f_i(\bar{x}) + (f_i)'(\bar{x}; d) \right\} - f(\bar{x}) 
\leq  \max_{i \in \mathcal{I}} \left\{ f_i(\bar{x}) + (f_i)'(\bar{x}; d) \right\} - f(\bar{x}),
\end{equation*}
which verifies \eqref{max rule formula} and thus completes the proof of the lemma.
\end{proof}\vspace*{0.05in}

The next lemma, taken from \cite[Proposition 2.4]{aragon2023coderivative}, provides a relationship between the subdifferential \eqref{sub} of a locally Lipschitzian and prox-regular function $f$ and the upper directional derivative \eqref{dirder} of $-f$.

\begin{lemma}\label{minus prox regular}
Let $f:\mathbb{R}^n\to(-\infty,\infty]$ be locally Lipschitzian around $\bar x$ and prox-regular at this point. Then $f$ is lower regular at $\bar x$ with the property $\mathrm{co}\,\partial(-f)(\bar x) = -\partial f(\bar x)$ and the representation
\begin{equation*}
\begin{aligned}
(-f)'(\bar x,d) = \inf\left\{\langle -v,d \rangle \mid v \in \partial f(\bar x)\right\}\;\mbox{ for any }\;d\in\mathbb{R}^n.
\end{aligned}
\end{equation*}
\end{lemma}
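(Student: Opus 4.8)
The plan is to derive all three assertions from the single structural fact already recorded above: since $f$ is locally Lipschitzian around $\bar x$ and prox-regular there, the cited result from \cite{Clarke1995} makes $f$ lower (subdifferentially) regular at $\bar x$, i.e.\ $\partial f(\bar x)=\hat\partial f(\bar x)$. This settles the first assertion immediately. I would then record two consequences that I will use repeatedly. Because $f$ is Lipschitzian, $\partial f(\bar x)$ is nonempty and compact; and because it now coincides with the regular subdifferential $\hat\partial f(\bar x)$, which is always closed and convex, the set $\partial f(\bar x)$ is a nonempty, compact, convex subset of $\R^n$. In particular the infimum in the claimed formula is attained over a nonempty set, so the statement is well posed.

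For the convex-hull identity I would pass through the Clarke subdifferential $\partial_C$. For locally Lipschitzian functions one has the plus-hull representation $\partial_C g(\bar x)=\mathrm{co}\,\partial g(\bar x)$ together with the negation rule $\partial_C(-f)(\bar x)=-\partial_C f(\bar x)$. Combining these with the convexity of $\partial f(\bar x)$ established above gives
\[
\mathrm{co}\,\partial(-f)(\bar x)=\partial_C(-f)(\bar x)=-\partial_C f(\bar x)=-\mathrm{co}\,\partial f(\bar x)=-\partial f(\bar x),
\]
where the last equality uses that $\partial f(\bar x)$ is already convex. This is exactly the asserted equality $\mathrm{co}\,\partial(-f)(\bar x)=-\partial f(\bar x)$.

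For the directional-derivative formula, since $-f$ is Lipschitzian, formula \eqref{dirder} gives $(-f)'(\bar x;d)=-\liminf_{t\to0^+}t^{-1}\big(f(\bar x+td)-f(\bar x)\big)$, so it suffices to identify the lower Dini derivative of $f$ with the support-type quantity $\max\{\langle v,d\rangle\mid v\in\partial f(\bar x)\}$. One inequality is immediate: for each $v\in\partial f(\bar x)=\hat\partial f(\bar x)$, inserting $u=\bar x+td$ into the defining inequality \eqref{rsub} yields $\liminf_{t\to0^+}t^{-1}\big(f(\bar x+td)-f(\bar x)\big)\ge\langle v,d\rangle$, and taking the maximum over $v$ gives the lower bound. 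Passing to $-f$ this already produces $(-f)'(\bar x;d)\le\inf\{\langle -v,d\rangle\mid v\in\partial f(\bar x)\}$.

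The hard part will be the reverse (upper) estimate $\liminf_{t\to0^+}t^{-1}\big(f(\bar x+td)-f(\bar x)\big)\le\max\{\langle v,d\rangle\mid v\in\partial f(\bar x)\}$, since the regular subdifferential on its own delivers only the lower bound. I would close this gap using regularity: for a Lipschitzian function lower regularity is equivalent to Clarke regularity, which guarantees that the one-sided derivative $\lim_{t\to0^+}t^{-1}\big(f(\bar x+td)-f(\bar x)\big)$ exists and equals the Clarke directional derivative $f^\circ(\bar x;d)=\max\{\langle v,d\rangle\mid v\in\partial_C f(\bar x)\}$, which is $\max\{\langle v,d\rangle\mid v\in\partial f(\bar x)\}$ by the convexity noted earlier. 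Equivalently, staying entirely within the limiting-subdifferential framework, one may invoke that for a regular function the subderivative coincides with the lower Dini derivative and is precisely the support function of $\partial f(\bar x)$. Substituting this common value then gives $(-f)'(\bar x;d)=-\max\{\langle v,d\rangle\mid v\in\partial f(\bar x)\}=\inf\{\langle -v,d\rangle\mid v\in\partial f(\bar x)\}$, completing the proof.
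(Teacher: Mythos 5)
Your argument is correct, but it cannot be compared with a proof in the paper because there is none: the paper imports this lemma verbatim from \cite[Proposition~2.4]{aragon2023coderivative} and offers only the citation. What you have written is a legitimate self-contained derivation. You correctly isolate the one fact that uses prox-regularity --- lower regularity of $f$ at $\bar x$, via \cite[Theorem~5.2]{Clarke1995}, which the paper itself records just after Definition~\ref{definition prox-regular} --- and then obtain the remaining two assertions from Lipschitz continuity and lower regularity alone. The detour through the Clarke subdifferential is sound: in finite dimensions $\partial_C g(\bar x)=\mathrm{co}\,\partial g(\bar x)$ for locally Lipschitzian $g$ (with the hull already closed by compactness of $\partial g(\bar x)$ and Carath\'eodory), $\partial_C(-f)(\bar x)=-\partial_C f(\bar x)$, and convexity of $\hat\partial f(\bar x)=\partial f(\bar x)$ collapses the hulls, giving the identity $\mathrm{co}\,\partial(-f)(\bar x)=-\partial f(\bar x)$. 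For the directional-derivative formula your appeal to the equivalence of lower and Clarke regularity for Lipschitz functions is a correct (and standard) citation, though slightly heavier than necessary: once you have the lower estimate $\liminf_{t\to0^+}t^{-1}\bigl(f(\bar x+td)-f(\bar x)\bigr)\ge\sigma_{\hat\partial f(\bar x)}(d)$ from \eqref{rsub}, the elementary bound $\limsup_{t\to0^+}t^{-1}\bigl(f(\bar x+td)-f(\bar x)\bigr)\le f^\circ(\bar x;d)=\sigma_{\mathrm{co}\,\partial f(\bar x)}(d)$ sandwiches both Dini derivatives between two support functions that coincide under lower regularity, so the ordinary limit exists and equals $\max\{\langle v,d\rangle\mid v\in\partial f(\bar x)\}$ without invoking the full regularity-equivalence theorem. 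Negating then yields exactly the stated $\inf$-formula. No gaps.
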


We conclude this section with presenting the constraint qualification condition and stationarity notion for the difference programs \eqref{difference-programming} formulated via the limiting subdifferential and broadly used in the paper. The following constraint qualifications for \eqref{difference-programming}  and their terminology are inspired by \cite{xu2015smoothing, ye2000constraint}, where the reader can find more details and relationships with other constraint qualification conditions.

\begin{definition}\label{def:NNAMCQandENNAMCQ}
Let $\bar{x}$ be a feasible solution to problem (\ref{difference-programming}).
We say that the {\em no nonzero abnormal multiplier constraint qualification} (NNAMCQ) holds at $\bar{x}$ if for any $\lambda_i \geq 0$, $i \in \mathcal{I}$, not all equal to zero, that satisfy $\lambda_i \varphi_i(\bar{x}) = 0$ for all $i \in \mathcal{I}$, one has
\begin{equation}\label{NNAMCQ}
0\notin \sum_{i\in\mathcal{I}} \lambda_i \big(\nabla g_i(\bar x)-\partial h_i(\bar{x})\big)+\mathcal N_X(\bar x).
\end{equation}
The {\em extended no nonzero abnormal multiplier constraint qualification} (ENNAMCQ) holds at $\bar{x}\in X$ if for any $\lambda_i\geq0$, $i\in\mathcal{I}$, not all equal to zero, that satisfy $\lambda_i\varphi_i(\bar x) \geq 0$ for all $i\in\mathcal{I}$, one has condition (\ref{NNAMCQ}).
\end{definition}

% \textcolor{blue}{
% The ENNAMCQ was originally introduced in \cite[Definition 2.3]{lin2014onsolving} for optimization problems with a single constraint. To extend this concept to our multi-constraint problem (\ref{difference-programming}), we can equivalently reformulate the constraints as a single inequality $\max_{i\in\mathcal I}\varphi_i(x) \leq 0$. Consequently, applying the standard ENNAMCQ to this reformulated problem at $\bar{x}$ means that either $\max_{i\in\mathcal I}\varphi_i(\bar x) < 0$, or $\max_{i\in\mathcal I}\varphi_i(\bar x) \geq 0$ and the following condition is satisfied by the limiting subdifferential calculus rules for finite maxima taken from \cite[Theorem~4.10]{mordukhovich2018variational}, together \textcolor{blue}{with} the fact that the function $\varphi_i$ is lower regular
% \begin{equation*}
% \begin{aligned}
%     0\notin&\partial\left(\max\limits_{i\in\mathcal{I}}\varphi_i \right)(\bar x) + \mathcal{N}_X(\bar x)\\
%     &\subseteq  +\mathcal{N}_X(\bar x),
% \end{aligned}
% \end{equation*}
% which exactly recovers the specified conditions for the multipliers $\lambda_i$ in Definition \ref{def:NNAMCQandENNAMCQ}.
% }

\begin{definition}\label{def:critical-point}
Let $\bar{x}$ be a feasible solution to problem (\ref{difference-programming}). We say that $\bar{x}$ is a {\em stationary/KKT point} of (\ref{difference-programming}) if there exist multipliers $\lambda_i \geq 0$ for $i\in\mathcal{I}$ such that
\begin{equation}\label{kkt}
\begin{aligned}
&0 \in \nabla g_0(\bar{x}) - \partial h_0(\bar{x}) + \sum_{i\in\mathcal{I}} \lambda_i \big( \nabla g_i(\bar{x}) - \partial h_i(\bar{x}) \big) + \mathcal N_X(\bar{x}), \\
&\lambda_i \big( g_i(\bar{x}) - h_i(\bar{x}) \big) = 0, \quad  \forall i\in\mathcal{I}.
\end{aligned}
\end{equation}
\end{definition}

\section{ESQM for Constrained Difference Programming}\label{sec:alg}
\setcounter{equation}{0}
In this section, we propose a new algorithm of the ESQM type to solve constrained problems of difference programming  and conduct its convergence analysis. \vspace*{0.05in}

To design our version of ESQM for the difference program \eqref{difference-programming}, define the {\em penalized problem}
\begin{equation}\label{penalized_problem}
\min_{x \in X} \phi_p(x):=\frac{1}{p}\varphi_0(x)+ \sum_{i\in\mathcal{I}} \max\{\varphi_i(x),0\},
\end{equation}
where  \( p > 0 \) is the {\em penalty parameter}.  
At each iteration $k$, given the current iterate $x^k \in X$ and the penalty parameter $p_k$, we construct a linear approximation of $\phi_{p_k}(x)$ at $x^k$ by
\begin{equation}\label{eq:tilde_phi}
\tilde{ \phi}_{k}(x):= \frac{1}{p_k} \left( \varphi_0(x^k) + \langle \nabla g_0(x^k) - v_0^k, x - x^k\rangle \right) + \sum_{i\in\mathcal{I}} \max \left\{ \varphi_i(x^k) + \langle \nabla g_i(x^k) - v_i^k, x - x^k \rangle, 0 \right\}, 
\end{equation}
where $v_i^k$ is a limiting subgradient of $h_i$ at $x^k$, i.e., $v_i^k \in \partial h_i(x^k)$, for $i \in \mathcal{I}$. The update direction $d^k$ is obtained by solving the {\em strongly convex quadratic programming subproblem}:
\begin{equation}\label{direction_sub}
\begin{aligned}
\min_{d \in \mathbb{R}^n} \, \tilde{ \phi}_{k}(x^k +d) + {\frac{1}{2} \langle G_kd,d\rangle} + \delta_X(x^k + d),
\end{aligned}
\end{equation}
where $G_k \in \mathbb R^{n\times n}$ is a suitably chosen symmetric positive definite matrix designed to
flexibly incorporate second-order information of the smooth component \(g_0\) when available, thereby yielding a more accurate local quadratic model for the subproblem \eqref{direction_sub} and potentially improving the overall efficiency of the algorithm. We assume  $G_k$ satisfies the following uniform boundedness property throughout the paper.
\begin{assumption}\label{ass_G}
There exists $\alpha, M_G > 0 $ such that for all $k \ge 0$,
\[
\alpha\|d\|^2 \leq \langle G_k d, d \rangle \leq M_G \|d\|^2, \quad \forall d \in \mathbb{R}^n.   
\]
\end{assumption}
This assumption ensures the strong convexity of subproblem \eqref{direction_sub}, guaranteeing that it is computationally tractable and yields a unique optimal solution $d^k$. 

After determining the update direction $d^k$, we employ a {\em backtracking linesearch strategy} to compute an appropriate stepsize for updating the next iterate $x^{k+1}$. Specifically, set the stepsize $\tau_k: = \beta^{q_k}$, where $\beta \in (0,1)$ and $q_k$ is the smallest nonnegative integer ensuring the following condition
\begin{equation}\label{eq:linesearch}
{\phi_{p_k}(x^k + \tau_k d^k) \leq \tilde\phi_{k}(x^k+\tau_kd^k) + (1-\sigma) \alpha \tau_k \| d^k \|^2}
\end{equation}
where $\sigma \in (0,1)$ is a predefined parameter, and $\alpha$ is the positive definiteness modulus of $G_k$ introduced in Assumption \ref{ass_G}. {Condition \eqref{eq:linesearch} bounds the error between the penalized function and its local linearization. As we will establish later in Lemma \ref{lemma:sufficient_decrease}, this condition guarantees a sufficient decrease in $\phi_{p_k}(x)$
\begin{equation*}
    \begin{aligned}
        \phi_{p_k}(x^k + \tau_k d^k)\leq\phi_{p_k}(x^k)-\sigma\alpha\tau_k\|d^k\|^2.
    \end{aligned}
\end{equation*}
The existence of a finite step size $\tau_k$ satisfying this condition is guaranteed (see Proposition~\ref{lemma:stepsize_existence} below).} The new iterate $x^{k+1}$ is then updated by
\begin{equation*}
x^{k+1} = x^k + \tau_k d^k.
\end{equation*}
Note that the convexity of $X$ and the fact that $x^k,  x^k + d^k\in X$ ensure that $x^{k+1} \in X$.

At the end of each iteration $k$, we update the penalty parameter $p_k$.
The role of $p_k$ is to enforce the feasibility of the iterates (i.e., satisfying all constraints such that $\sum_{i\in\mathcal I} \max\{\varphi_i(x),0\} = 0$). We first compute an approximation of the constraint violation as
$$
t^k := \sum_{i\in\mathcal I} \max \{ \varphi_i(x^k) + \langle \nabla g_i(x^k) - v_i^k, d^k \rangle, 0 \}.
$$
Intuitively, a large violation $t^k$ suggests that $p_k$ is insufficient to enforce feasibility, necessitating an increase. However, for a given $p_k$, the current iterate $x^k$ is generally not an exact minimizer of the penalized problem \eqref{penalized_problem}. This inexactness complicates the assessment of whether $p_k$ is too small. We use the norm of the update direction, $\|d^k\|$, to measure this inexactness, since $d^k = 0$ corresponds to $x^k$ being a stationary point of \eqref{penalized_problem}. A smaller $\|d^k\|$ indicates that $x^k$ is close to a stationary point, meaning the constraint violation $t^k$ should be small. Conversely, a larger $\|d^k\|$ implies lower subproblem accuracy, and a larger $t^k$ can be temporarily tolerated.

Thus, we dynamically scale the threshold for $t^k$ relative to $\|d^k\|$. Specifically, if $t^{k}$ exceeds this threshold, i.e., $t^k \ge c_p \|d^k\|$ for a preset parameter $c_p > 0$, the penalty parameter $p_{k+1}$ is increased; otherwise, it remains unchanged. This relative thresholding is designed to increase $p_k$ when the penalization appears inadequate, helping to avoid excessively large penalty values in the early iterations.

With these ingredients in hand, we are ready to design the desired algorithm of the ESQM type to solve the class of constrained difference programming problems \eqref{difference-programming}. From now on, Algorithm~\ref{alg:ESQM} described below is referred to as {\em CDP-ESQM}.

\begin{algorithm}
\caption{ESQM for Solving Constrained Difference Programming (CDP-ESQM)}
\label{alg:ESQM}
\begin{algorithmic}[1]
\Require Initial iterate \( x^0 \in X \), linesearch parameters \( \beta \in (0,1) \) and \( \sigma \in (0,1) \),  penalty parameters  \( p_0, \varrho_p, c_p > 0 \).

\For{$k = 0, 1, \dots$}

\State Choose $v_i^k \in \partial h_i(x^k)$ for $i \in \{0\} \cup \mathcal{I}$.
    \State {Construct a symmetric positive definite matrix $G_k$ satisfying Assumption~\ref{ass_G}.}

\State Compute the update direction \( d^k \) by solving the strongly convex quadratic programming  subproblem \eqref{direction_sub}. 
\State \textbf{If} $d^k = 0$ and $\varphi_i(x^k) \le 0, \ \forall i \in \mathcal{I}$, \textbf{then} STOP and return $x^k$.

\State Set $\tau_k = 1$. 
\While{
{$\phi_{p_k}(x^k + \tau_k d^k) > \tilde\phi_k(x^k+\tau_kd^k) + (1-\sigma)\alpha\tau_k \| d^k \|^2$}
}
{$ \tau_k = \beta \tau_k$. }
\EndWhile

\State  Update the iterate: \( x^{k+1} := x^k + \tau_k d^k \).

\State Compute $t^k := \sum_{i\in\mathcal{I}} \max \{ \varphi_i(x^k) + \langle \nabla g_i(x^k) - v_i^k, d^k \rangle, 0 \}.$
\State Update the penalty parameter by
\[
p_{k+1} =
\begin{cases}
p_k + \varrho_p & \text{if } t^k \geq c_p \| d^k \|, \\
p_k & \text{otherwise.}
\end{cases}
\]

\EndFor
\end{algorithmic}
\end{algorithm}

In the rest of this section, we provide a detailed {\em convergence analysis} of CDP-ESQM in Algorithm~\ref{alg:ESQM}. Unless otherwise stated, our {\em standing assumptions} on the initial data of problem \eqref{difference-programming} are as formulated in Section~\ref{intro}, i.e., the set $X$ is closed and convex, the functions $g_i$ ($i \in \mathcal{I}$) are of class ${\cal C}^{1,1}$, and the functions $h_i$ ($i \in \mathcal{I}$) are prox-regular and Lipschitz continuous on $X$. The reader can observe that some of the results given below hold under more general assumptions. The main goal in what follows is to establish the {\em global subsequential convergence} of CDP-ESQM to a stationary point of \eqref{difference-programming} in the sense of Definition~\ref{def:critical-point}.

Remembering that $d^k$ is a solution to the convex subproblem \eqref{direction_sub} and applying the subdifferential calculus rules of convex analysis for finite sums and maxima (see, e,g., \cite[Theorems~3.18, 3.27]{mordukhovich2023easy}) and the first-order optimality conditions from \cite[Theorem~7.15]{mordukhovich2023easy}, we arrive at the following result.

\begin{lemma}\label{lem:dk_foc}
Let $d^k$ be the solution to subproblem \eqref{direction_sub}. Then there {exist $\lambda_{k,i} \in [0,1]$ for $i \in \mathcal{I}$} such that
\begin{equation}\label{eq:foc} 
0 \in \frac{1}{p_k}\left(\nabla g_0(x^k)-v_0^k\right) + {\sum_{i\in\mathcal{I}} \lambda_{k,i} \left(\nabla g_i(x^k)-v_i^k\right)} + {G_k} d^k+\mathcal N_X \left(x^k+d^k\right), 
\end{equation}
and, for each $i\in\mathcal{I}$, the following conditions hold
\begin{equation}\label{eq:multiplier}
\lambda_{k,i} \min\left\{ \varphi_i(x^k) + \langle \nabla g_i(x^k) - v_i^k, d^k \rangle, 0 \right\}=(1- \lambda_{k,i})\max\left\{ \varphi_i(x^k) +  \langle \nabla g_i(x^k) - v_i^k, d^k \rangle, 0 \right\} =0.
\end{equation} 
\end{lemma}\vspace*{0.05in}

The next useful lemma is derived by employing the equalities in \eqref{eq:multiplier}.

\begin{lemma}\label{lemma:multiplier_inequality}
Let $d^k$ solve subproblem \eqref{direction_sub}. Then for any {$\lambda_{k,i} \in \left[0, 1\right]$ ($i\in\mathcal{I}$)} satisfying \eqref{eq:multiplier}, we have, for each {$i \in \mathcal{I}$}
\begin{equation}\label{eq:multiplier_inequality}
\max\left\{\varphi_i(x^k) + \langle \nabla g_i(x^k) - v_i^k, d^k \rangle, 0\right\} - \max\left\{\varphi_i(x^k), 0\right\} \le  \lambda_{k,i} \langle \nabla g_i(x^k) - v_i^k, d^k \rangle.
\end{equation}
\end{lemma}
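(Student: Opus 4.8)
The plan is to reduce \eqref{eq:multiplier_inequality} to a purely algebraic statement about the two scalars $a := \varphi_1(x^k)$ and $b := \langle \nabla g_1(x^k) - v_1^k, d^k\rangle$, and then to use the two equalities in \eqref{eq:multiplier} to pin down the exact value of $\max\{a+b,0\}$. With this notation the target inequality reads $\max\{a+b,0\} - \max\{a,0\} \le \lambda_k b$, and all of the work lies in extracting information from \eqref{eq:multiplier}, which has the form of a pair of complementarity-type relations.

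The crucial step is to show that \eqref{eq:multiplier} forces the identity $\max\{a+b,0\} = \lambda_k(a+b)$. First, the second equality in \eqref{eq:multiplier}, namely $(1-\lambda_k)\max\{a+b,0\} = 0$, rewrites as $\max\{a+b,0\} = \lambda_k \max\{a+b,0\}$. Next, I apply the elementary decomposition $\max\{s,0\} = s - \min\{s,0\}$ with $s = a+b$, which gives $\lambda_k \max\{a+b,0\} = \lambda_k(a+b) - \lambda_k \min\{a+b,0\}$; invoking the first equality $\lambda_k \min\{a+b,0\} = 0$ then reduces this to $\lambda_k \max\{a+b,0\} = \lambda_k(a+b)$. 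Chaining the two displays yields $\max\{a+b,0\} = \lambda_k(a+b)$, as claimed.

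With the identity in hand, I would write $\max\{a+b,0\} - \max\{a,0\} = \lambda_k a + \lambda_k b - \max\{a,0\} = \lambda_k b + \big(\lambda_k a - \max\{a,0\}\big)$, so it remains only to verify $\lambda_k a \le \max\{a,0\}$. This is immediate from $\lambda_k \in [0,1]$: if $a \ge 0$ then $\lambda_k a \le a = \max\{a,0\}$, while if $a < 0$ then $\lambda_k a \le 0 = \max\{a,0\}$ because $\lambda_k \ge 0$. Substituting back gives $\max\{a+b,0\} - \max\{a,0\} \le \lambda_k b$, which is exactly \eqref{eq:multiplier_inequality}.

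I do not anticipate any genuine obstacle here: the entire content is the correct manipulation of the two equalities of \eqref{eq:multiplier}, and the only point requiring care is the sign bookkeeping in the decomposition $\max\{s,0\} = s - \min\{s,0\}$ together with ensuring that \emph{both} equalities are used — one to eliminate the $(1-\lambda_k)$ factor and the other to discard the $\min$ term. A case-by-case split on the sign of $a+b$ would also close the argument, but the identity-based route is cleaner since it avoids enumerating the possible values of $\lambda_k$.
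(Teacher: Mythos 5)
Your proof is correct and follows essentially the same route as the paper: both arguments rest on the identity $\max\{a+b,0\}=\lambda_k(a+b)$ extracted from \eqref{eq:multiplier} together with the elementary bound $\lambda_k a\le\max\{a,0\}$ for $\lambda_k\in[0,1]$. The only difference is cosmetic — you derive the identity explicitly via the decomposition $\max\{s,0\}=s-\min\{s,0\}$, whereas the paper asserts it directly from \eqref{eq:multiplier}.
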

\begin{proof}\, {For each $i \in \mathcal{I}$,} picking any {$\lambda_{k,i} \in \left[0, 1\right]$} tells us that
\begin{equation*}
\lambda_{k,i} \varphi_i(x^k) \le \lambda_{k,i} \max\left\{\varphi_i(x^k), 0\right\} \le \max\left\{\varphi_i(x^k), 0\right\}.
\end{equation*}
Since {$\lambda_{k,i}$} satisfies  \eqref{eq:multiplier}, it follows that
\begin{equation*}
\max\left\{\varphi_i(x^k) + \langle \nabla g_i(x^k) - v_i^k, d^k \rangle, 0\right\} = \lambda_{k,i} \left(\varphi_i(x^k) + \langle \nabla g_i(x^k) - v_i^k, d^k \rangle\right).
\end{equation*}
Combining the two inequalities above yields
\begin{equation*}
\begin{aligned}
&\max\left\{\varphi_i(x^k) + \langle \nabla g_i(x^k) - v_i^k, d^k \rangle, 0\right\} 
- \max\left\{\varphi_i(x^k), 0\right\} \\
\le \, & \lambda_{k,i} \left(
\varphi_i(x^k) + \langle \nabla g_i(x^k) - v_i^k, d^k \rangle
\right) - \lambda_{k,i} \varphi_i(x^k) \\
= \, &\lambda_{k,i} \langle \nabla g_i(x^k) - v_i^k, d^k \rangle,
\end{aligned}
\end{equation*}  
which verifies \eqref{eq:multiplier_inequality} and thus completes the proof of the lemma.
\end{proof}\vspace*{0.05in}

Building upon the preceding lemmas, we now demonstrate the \emph{well-definedness} of the \emph{backtracking linesearch} procedure in the CDP-ESQM algorithm, ensuring that the stepsize $\tau_k$ can be determined through a finite number of backtracking steps.
\begin{proposition}
\label{lemma:stepsize_existence}
The backtracking linesearch procedure in CDP-ESQM terminates in a finite number of steps, i.e., there exists $q_k \in \mathbb{N}$ for which the stepsize $\tau_k = \beta^{q_k}$ satisfies \eqref{eq:linesearch}.
\end{proposition}
\begin{proof}\, 
Fix any given $k\in\mathbb N$. For each $i \in \mathcal{I}$, since {$g_i$} is of class $\mathcal{C}^{1,1}$ on $X$, Lemma A.11 in \cite{IzmailovSolodov2014} ensures that there exist constants {$r_{g_i}, \varepsilon_{g_i} > 0$} such that the inequality
\begin{equation*}
    \begin{aligned}
       g_i(x^k+ \tau d^k) \leq g_i(x^k) + \tau\langle\nabla g_i(x^k),d^k\rangle + \frac{r_{g_i}}{2}\tau^2\|d^k\|^2 
    \end{aligned}
\end{equation*}
holds for any $0 \le \tau \le \varepsilon_{g_i}$. Furthermore, for each $i \in \mathcal{I}$, the prox-regularity of {$h_i$} on $X$ guarantees the existence of constants {$r_{h_i}, \varepsilon_{h_i} > 0$} such that for any $0 \le \tau \le \varepsilon_{h_i}$, the following estimate holds:
\begin{equation*}
    \begin{aligned}
        h_i(x^k+\tau d^k) \geq h_i(x^k) + \tau\langle v_i^k,d^k \rangle - \frac{r_{h_i}}{2}\tau^2\|d^k\|^2. 
    \end{aligned}
\end{equation*}
Combining the above inequalities yields that for any $0 \le \tau \le \varepsilon' := \min_{i\in\mathcal{I}}\{\varepsilon_{g_i}, \varepsilon_{h_i}\}$, we have
\begin{equation*}
    \begin{aligned}
        \varphi_0(x^k+td^k) \leq \varphi_0(x^k) + \tau\langle \nabla g_0(x^k) -v_0^k,d^k \rangle + \frac{r_{g_0}+r_{h_0}}{2}\tau^2\|d^k\|^2
    \end{aligned}
\end{equation*}
and for each $i \in \mathcal{I}$,
\begin{equation*}
   \begin{aligned}
  \max\left\{ \varphi_i(x^k+ \tau d^k), 0 \right\} 
   \leq \, & \max\left\{ \varphi_i(x^k) + \tau\langle \nabla g_i(x^k) -v_i^k,d^k \rangle + \frac{r_{g_i}+r_{h_i}}{2}\tau^2\|d^k\|^2, 0 \right\} \\
   \leq \, & \max\left\{ \varphi_i(x^k) + \tau\langle \nabla g_i(x^k) -v_i^k,d^k \rangle, 0 \right\} + \frac{r_{g_i}+r_{h_i}}{2}\tau^2\|d^k\|^2.
   \end{aligned} 
\end{equation*}
Consequently, combining the above inequalities, it follows that
\begin{equation*}
    \begin{aligned}
       &\frac{1}{p_k}{\varphi_0}(x^k+ \tau d^k) + {\sum_{i\in\mathcal{I}}} \max\left\{ {\varphi_i}(x^k+\tau d^k), 0\right\} \\
       \leq \, & \frac{1}{p_k}\left( {\varphi_0}(x^k) + \tau \langle \nabla g_0(x^k)-v_0^k,d^k \rangle\right) + {\sum_{i\in\mathcal{I}}} \max\left\{ {\varphi_i}(x^k) + \tau \langle \nabla g_i(x^k)-v_i^k,d^k \rangle, 0 \right\} \\
       &\quad + \left( \frac{r_{g_0} + r_{h_0}}{2p_k} + \sum_{i\in\mathcal{I}} \frac{r_{g_i} + r_{h_i}}{2} \right) \tau^2 \|d^k\|^2
    \end{aligned}
\end{equation*}
holds for all $0 \le \tau \le \varepsilon_k'$. According to the definition of $\tilde{\phi}_k$ and by defining $r := (r_{g_0}+r_{h_0})/p_k + \sum_{i\in\mathcal I} (r_{g_i}+r_{h_i})$, we obtain for any $0 \le \tau \le \varepsilon' $,
\begin{equation}\label{eq:quadratic_bound}
      \phi_{p_k}(x^k+\tau d^k) \leq \tilde\phi_k(x^k+ \tau d^k)+\frac{r}{2}\tau ^2\|d^k\|^2.
\end{equation}
This further implies that when $0 \le \tau \le \varepsilon := \min\left\{\varepsilon', 2(1-\sigma)\alpha/r \right\}$, it holds that
\[
\phi_{p_k}(x^k+\tau d^k) \leq \tilde\phi_k(x^k+ \tau d^k)+ (1-\sigma) \alpha \tau \|d^k\|^2.
\]
This readily implies that
\begin{equation*}
\left\{\tau \mid 
\phi_{p_k}(x^k + \tau d^k) 
\leq \tilde\phi_k(x^k+ \tau d^k)+ (1-\sigma) \alpha \tau \|d^k\|^2, \,  \tau = \beta^q, q \in \mathbb{N} 	\right\} \neq
\varnothing,
\end{equation*}
which ensures that the backtracking linesearch terminates in a finite number of steps.
\end{proof}\vspace*{0.05in}

We next demonstrate the following key inequalies for deriving the sufficient decrease of penalized functions.

\begin{lemma}\label{lemma:descent_direction}
Let the direction $d^k$ solve subproblem \eqref{direction_sub}. Then there exist {$\lambda_{k,i} \in [0,1]$ for $i\in\mathcal{I}$} satisfying  conditions \eqref{eq:foc}, \eqref{eq:multiplier} and such that
\begin{equation}\label{eq:descent_direction}
(\phi_{p_k})'(x^k; d^k) \le 
\frac{1}{p_k} \left\langle \nabla g_0(x^k) - v_0^k, d^k \right\rangle 
+ \sum_{i\in\mathcal I} \lambda_{k,i} \left\langle\nabla g_i(x^k)-v_i^k, d^k\right\rangle \le -\langle G_kd^k,d^k \rangle.
\end{equation}
\end{lemma}

\begin{proof}\,
Since $d^k$ solves the convex subproblem \eqref{direction_sub}, Lemma~\ref{lem:dk_foc} provides the existence of $\lambda_{k,i} \in [0,1]$ ($i \in \mathcal{I}$) satisfying \eqref{eq:foc} and \eqref{eq:multiplier}. 
By definition of the upper directional derivative, we have
\begin{equation}\label{prop1_eq1}
  (\phi_{p_k})'(x^k; d^k) 
  \leq \frac{1}{p_k} \left( \langle \nabla g_0(x^k), d^k \rangle + (-h_0)'(x^k; d^k)\right)
  + \sum_{i\in\mathcal I} \left(\max\{\varphi_i, 0\}\right)'(x^k; d^k).
\end{equation}
Remembering that $h_0$ is locally Lipschitzian around $x^k$ and prox-regular at $x^k$, and that $v_0^k \in \partial h_0(x^k)$, we get by Lemma~\ref{minus prox regular} that
\begin{equation}\label{prop1_eq2}
 (-h_0)'(x^k; d^k) = \inf\left\{ \langle -v, d^k \rangle \mid v \in \partial h_0(x^k) \right\} \leq \langle -v_0^k, d^k \rangle.
\end{equation}
Applying now Lemma~\ref{max rule} to the terms $\left(\max\{\varphi_i, 0\}\right)'(x^k; d^k)$ tells us that for each $i \in \mathcal I$,
\begin{equation}\label{prop1_eq3}
 \left(\max\{{\varphi_i}, 0\}\right)'\left(x ^k; d^k\right) \le \max\left\{ {\varphi_i}(x^k) +{\varphi_i}'(x^k; d^k), 0 \right\} - \max\left\{ {\varphi_i}(x^k), 0 \right\}.
\end{equation}
Since $h_i$ is also locally Lipschitzian around $x^k$ and prox-regular at this point with $v_i^k \in \partial h_i(x^k)$, it follows from Lemma~\ref{minus prox regular} that
\begin{equation*}
 (-h_i)'(x^k; d^k)  \leq \langle -v_i^k, d^k \rangle.
\end{equation*}
Combining the latter with the smoothness of {$g_i$} yields for each $i \in \mathcal I$
 \begin{equation*}
   \varphi_i'(x^k; d^k) = \langle \nabla {g_i}(x^k), d^k \rangle + (-h_i)'(x^k; d^k) \le \langle \nabla g_i(x^k) - v_i^k, d^k \rangle,
 \end{equation*}
and therefore, summing over $i\in\mathcal I$, we deduce from \eqref{prop1_eq3} that
\begin{equation}\label{prop1_eq4}
 \begin{aligned}
   \sum_{i\in\mathcal I} \left(\max\{\varphi_i, 0\}\right)'\left(x^k; d^k\right) \leq\, &\sum_{i\in\mathcal I} \left( \max\left\{ \varphi_i(x^k) + \langle \nabla g_i(x^k) - v_i^k, d^k \rangle, 0 \right\} 
   - \max\left\{ \varphi_i(x^k), 0 \right\} \right) \\
   \le \, & \sum_{i\in\mathcal I} \lambda_{k,i} \left\langle \nabla g_i(x^k) - v_i^k, d^k \right\rangle,
 \end{aligned}
\end{equation}
while the second inequality therein follows from Lemma~\ref{lemma:multiplier_inequality}. Combining \eqref{prop1_eq1}, \eqref{prop1_eq2} and \eqref{prop1_eq4} yields the first estimate in \eqref{eq:descent_direction}. Next we use \eqref{eq:foc}, 
the fact that $x^k, x^k + d^k \in X$, and definition \eqref{nc-conv} of the normal cone $\mathcal{N}_X$ to obtain
\begin{equation*}
\left\langle \frac{1}{p_k}\left(\nabla g_0(x^k)-v_0^k\right) + \sum_{i\in\mathcal I} \lambda_{k,i} \left(\nabla g_i(x^k)-v_i^k\right) + G_kd^k, x^k + d^k - x^k \right \rangle \le 0,
\end{equation*}
which provides in turn the estimate
\begin{equation}\label{prop1_eq5}
\frac{1}{p_k}\left\langle\nabla g_0(x^k)-v_0^k, d^k\right\rangle + \sum_{i\in\mathcal I} \lambda_{k,i} \left\langle\nabla g_i(x^k)-v_i^k, d^k\right\rangle \le -\langle G_kd^k,d^k \rangle.
\end{equation}
%Combining \eqref{prop1_eq1}, \eqref{prop1_eq2}, \eqref{prop1_eq4}, and
Then \eqref{prop1_eq5} verifies the claimed inequalities in \eqref{eq:descent_direction}.
\end{proof}\vspace*{0.05in}

{
Now we show that the condition \eqref{eq:linesearch} in backtracking linesearch procedure for selecting an appropriate stepsize guarantees the updated iterate satisfy the sufficient decrease condition.
\begin{lemma}\label{lemma:sufficient_decrease}
Let $\{x^k\}$, $\{d^k\}$, and $\{\tau_k\}$ be the sequences generated by CDP-ESQM. Then we have
\begin{equation}\label{lem36_eq}
    \begin{aligned}
        \phi_{p_k}(x^{k+1})\leq\phi_{p_k}(x^k)-\sigma\alpha\tau_k\|d^k\|^2.
    \end{aligned}
\end{equation}
\end{lemma}}

\begin{proof}\,
By Lemma~\ref{lemma:descent_direction}, there exist multipliers $\lambda_{k,i}$ ($i \in \mathcal{I}$) satisfying conditions \eqref{eq:foc} and \eqref{eq:multiplier} such that
\begin{equation}\label{lem36_eq1}
    \frac{1}{p_k} \langle \nabla g_0(x^k) - v_0^k, d^k \rangle + \sum_{i \in \mathcal{I}} \lambda_{k,i} \langle\nabla g_i(x^k)-v_i^k, d^k\rangle \leq -\langle G_k d^k, d^k \rangle.
\end{equation}
Since these specific multipliers $\lambda_{k,i}$ satisfy \eqref{eq:multiplier} for each $i \in \mathcal{I}$, they fulfill the assumptions of Lemma~\ref{lemma:multiplier_inequality}. Furthermore, since $\tau_k \in (0, 1]$, we have the following convex combination:
\begin{equation*}
    \begin{aligned}
        \varphi_i(x^{k}) + \tau_k\langle\nabla g_i(x^k) - v_i^k, d^k\rangle = \tau_k\left(\varphi_i(x^{k}) + \langle\nabla g_i(x^k) - v_i^k, d^k\rangle\right) + (1-\tau_k)\varphi_i(x^{k}).
    \end{aligned}
\end{equation*}
Using the convexity of the function {$t \mapsto \max\{t,0\}$} yields, for each $i \in \mathcal{I}$,
\begin{equation*}
    \begin{aligned}
      &\max\left\{\varphi_i(x^{k}) + \tau_k\langle\nabla g_i(x^k) - v_i^k, d^k\rangle, 0\right\} \\ \leq \, & \tau_k\max\left\{\varphi_i(x^{k}) + \langle\nabla g_i(x^k) - v_i^k, d^k\rangle, 0\right\} + (1-\tau_k)\max\left\{\varphi_i(x^{k}), 0\right\}.  
    \end{aligned}
\end{equation*}
Next, applying Lemma~\ref{lemma:multiplier_inequality} to bound the first term on the right-hand side gives
\begin{equation*}
    \begin{aligned}&\tau_k\max\left\{\varphi_i(x^{k}) + \langle\nabla g_i(x^k) - v_i^k, d^k\rangle, 0\right\} + (1-\tau_k)\max\left\{\varphi_i(x^{k}), 0\right\} \\ \leq \,& \tau_k\left(\max\left\{\varphi_i(x^k), 0\right\} + \lambda_{k,i} \langle\nabla g_i(x^k) - v_i^k, d^k \rangle\right) + (1-\tau_k)\max\left\{\varphi_i(x^{k}), 0\right\} \\ = \,& \max\left\{\varphi_i(x^{k}), 0\right\} + \tau_k \lambda_{k,i} \left\langle \nabla g_i(x^k) - v_i^k, d^k \right\rangle.\end{aligned}
\end{equation*}

Consequently, the linear approximation $\tilde{\phi}_k$ evaluated at $x^k + \tau_k d^k$ satisfies
\begin{equation}\label{lem36_eq2}
    \begin{aligned}
        &\tilde{\phi}_k(x^k+\tau_kd^k)\\
        %= \, &\frac{1}{p_k}\left(\varphi_0(x^k) + \tau_k\langle\nabla g_0(x^k) - v_0^k, d^k\rangle\right) + \sum_{i \in \mathcal{I}} \max\{\varphi_i(x^{k}) + \tau_k\langle\nabla g_i(x^k) - v_i^k, d^k\rangle, 0\} \\
        \leq \, & \frac{1}{p_k}\varphi_0(x^k) + \sum_{i \in \mathcal{I}} \max\{\varphi_i(x^k), 0\} + \frac{\tau_k}{p_k} \langle \nabla g_0(x^k) - v_0^k, d^k \rangle + \tau_k\sum_{i \in \mathcal{I}} \lambda_{k,i} \langle \nabla g_i(x^k) - v_i^k, d^k \rangle \\
        = \, & \phi_{p_k}(x^k) + \tau_k \left( \frac{1}{p_k} \langle \nabla g_0(x^k) - v_0^k, d^k \rangle + \sum_{i \in \mathcal{I}} \lambda_{k,i} \langle \nabla g_i(x^k) - v_i^k, d^k \rangle \right) \\
        \leq \, & \phi_{p_k}(x^k) - \tau_k\langle G_kd^k, d^k\rangle,
    \end{aligned}
\end{equation}
where the last inequality follows from \eqref{lem36_eq1}. Combining this result with the linesearch condition \eqref{eq:linesearch} and the Assumption \ref{ass_G} on $G_k$ that $\langle G_kd^k, d^k\rangle \geq \alpha\|d^k\|^2$ implies
\begin{equation*}
    \begin{aligned}
        \phi_{p_k}(x^{k+1}) = \phi_{p_k}(x^k+\tau_kd^k) &\leq \tilde{\phi}_k(x^k+\tau_kd^k) + (1-\sigma)\alpha\tau_k\|d^k\|^2 \\
        &\leq \phi_{p_k}(x^k) - \tau_k\langle G_kd^k, d^k\rangle + (1-\sigma)\alpha\tau_k\|d^k\|^2 \\
        &\leq \phi_{p_k}(x^k) - \sigma\alpha\tau_k\|d^k\|^2.
    \end{aligned}
\end{equation*}
This completes the proof.
\end{proof}

The result above justifies the well-definiteness of the backtracking linesearch procedure for selecting an appropriate stepsize and it guarantee that the sufficient decrease condition \eqref{lem36_eq} holds. This condition enables us in turn to verify a decrease property of the sequence
$$
A_k:=\frac{1}{p_k} \left( \varphi_0(x^k) - m_{\varphi_0} \right) +\sum_{i\in\mathcal I} \max \left\{ \varphi_i(x^k), 0 \right\},\quad k\in\mathbb{N}.
$$

\begin{proposition} \label{prop:bounded-subsequence}
Let $\{x^k\}$, $\{d^k\}$, and $\{\tau_k\}$ be the sequences generated by CDP-ESQM. Then we have
\begin{equation*}
\begin{aligned}
\frac{1}{p_{k+1}}\left(\varphi_0(x^{k+1}) - m_{\varphi_0}\right) +& \sum_{i\in\mathcal I} \max\left\{\varphi_i(x^{k+1}), 0\right\}\\
&\le   \frac{1}{p_k}\left(\varphi_0(x^{k}) - m_{\varphi_0}\right)+  \sum_{i\in\mathcal I} \max\left\{\varphi_i(x^{k}), 0\right\} - \sigma\alpha\tau_k\|d^k\|^2,
\end{aligned}
\end{equation*}
\begin{equation}\label{K}
\sum_{k \in \mathbb{N}} \| x^{k+1} - x^k \|^2 < \infty.
\end{equation}
\end{proposition}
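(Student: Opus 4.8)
The plan is to recognize that $A_k = \phi_{p_k}(x^k) - m_{\varphi_0}/p_k$, so that the sufficient decrease of $\phi_{p_k}$ guaranteed by the linesearch can be converted into the claimed decrease of the sequence $\{A_k\}$, with careful attention to the change of the penalty parameter from $p_k$ to $p_{k+1}$. First I would invoke the sufficient decrease condition \eqref{eq:linesearch}, which holds by Proposition~\ref{prop:stepsize_existence} for the accepted stepsize $\tau_k$, together with the update $x^{k+1}=x^k+\tau_k d^k$. Writing out the definition of $\phi_{p_k}$ and subtracting $m_{\varphi_0}/p_k$ from both sides yields
\begin{equation*}
\frac{1}{p_k}\left(\varphi_0(x^{k+1}) - m_{\varphi_0}\right) + \max\left\{\varphi_1(x^{k+1}), 0\right\} \le A_k - \sigma\tau_k\alpha\|d^k\|^2.
\end{equation*}

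Next I would bridge the gap between the left-hand side above and $A_{k+1}$, which differs only in the penalty coefficient (namely $p_{k+1}$ in place of $p_k$). The key observation is that the penalty parameter is nondecreasing, i.e., $p_{k+1}\ge p_k>0$ by the update rule, so $1/p_{k+1}\le 1/p_k$, while the standing boundedness assumption on $\varphi_0$ together with $x^{k+1}\in X$ gives $\varphi_0(x^{k+1}) - m_{\varphi_0}\ge 0$. Consequently $\frac{1}{p_{k+1}}\left(\varphi_0(x^{k+1}) - m_{\varphi_0}\right) \le \frac{1}{p_k}\left(\varphi_0(x^{k+1}) - m_{\varphi_0}\right)$, and adding $\max\{\varphi_1(x^{k+1}),0\}$ to both sides shows $A_{k+1}\le A_k - \sigma\tau_k\alpha\|d^k\|^2$, which is exactly the first assertion.

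For the summability \eqref{K} I would telescope. Since each $A_k\ge 0$ (again by $\varphi_0(x^k)\ge m_{\varphi_0}$, $p_k>0$, and $\max\{\cdot,0\}\ge 0$), summing the decrease inequality over $k$ and using $A_0<\infty$ gives $\sigma\alpha\sum_{k} \tau_k\|d^k\|^2 \le A_0 < \infty$. Finally, since $\tau_k=\beta^{q_k}\in(0,1]$ with $\beta\in(0,1)$ we have $\tau_k^2\le\tau_k$, so using $x^{k+1}-x^k=\tau_k d^k$ yields
\begin{equation*}
\sum_{k\in\mathbb{N}}\|x^{k+1}-x^k\|^2 = \sum_{k\in\mathbb{N}}\tau_k^2\|d^k\|^2 \le \sum_{k\in\mathbb{N}}\tau_k\|d^k\|^2 < \infty.
\end{equation*}
The only mildly delicate point is the penalty-parameter transition in the second step, where the sign condition $\varphi_0(x^{k+1})-m_{\varphi_0}\ge 0$ is precisely what makes the increase of $p_k$ work in our favor; everything else is a routine telescoping argument resting on the nonnegativity and finiteness of $A_k$.
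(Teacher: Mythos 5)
Your proposal is correct and follows essentially the same route as the paper's proof: the linesearch condition gives the decrease in $\phi_{p_k}$, the transition from $p_k$ to $p_{k+1}$ is handled via $p_{k+1}\ge p_k$ together with $\varphi_0(x^{k+1})-m_{\varphi_0}\ge 0$, and the summability follows by telescoping the nonnegative sequence $A_k$ and using $\tau_k^2\le\tau_k$. No gaps.
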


\begin{proof}\,
It follows from Proposition~\ref{lemma:stepsize_existence} that the backtracking linesearch condition \eqref{eq:linesearch} is satisfied at each iteration. And then Lemma \ref{lemma:sufficient_decrease} gives us the inequalities
\begin{equation*}
\begin{aligned}
&\frac{1}{p_{k+1}}\left(\varphi_0(x^{k+1}) - m_{\varphi_0}\right) + \sum_{i\in\mathcal I} \max\{\varphi_i(x^{k+1}), 0\} \\
\leq \, &\frac{1}{p_k}\left(\varphi_0(x^{k+1}) - m_{\varphi_0}\right) + \sum_{i\in\mathcal I} \max\{\varphi_i(x^{k+1}), 0\} \\
\leq \, &\frac{1}{p_k}\left(\varphi_0(x^{k})- m_{\varphi_0}\right)+\sum_{i\in\mathcal I} \max\{\varphi_i(x^{k}),0\}
-\sigma\alpha\tau_k\|d^k\|^2,
\end{aligned}
\end{equation*}
where the first one is a consequence of the nonnegativity of $\varphi_0(x^{k+1}) - m_{\varphi_0}$ and the fact that $p_{k+1} \ge p_k$.

Next we investigate the series convergence in \eqref{K}. 
Summing up the above estimates with respect to $k$ and noting that $A_k \ge 0$ tell us that 
\begin{equation}\label{prop2_eq2}
\sum_{k = 1}^K \|x^{k+1} - x^k\|^2 =  \sum_{k = 1}^K\tau_k^2 \|d^k\|^2 \leq  \sum_{k = 1}^K \tau_k \|d^k\|^2 \leq \frac{1}{\sigma\alpha }  \sum_{k = 1}^K \left(A_k - A_{k+1}\right) \leq \frac{A_1}{\sigma\alpha}.
\end{equation}
for any $K\in\mathbb{N}$. This readily verifies \eqref{K} and completes the proof of the proposition.
\end{proof}\vspace*{0.07in}

To establish the convergence properties of CDP-ESQM based on the {decrease of $\{A_k\}$}, we need to further demonstrate that the stepsize {sequence $\{\tau_k\}$} is bounded away from zero. %\textcolor{green}{\sout{and does not vanish}}.

\begin{proposition} \label{prop:tau-lower-bound}
Let $\left\{x^k\right\}$, $\left\{d^k\right\}$, and $\left\{\tau_k\right\}$ be the sequences generated by CDP-ESQM.
For any bounded subsequence $\left\{x^{k_j}\right\}$ of $\left\{x^k\right\}$, we have that $\inf_{j \in \mathbb{N}} \tau_{k_j} > 0$ and $d^{k_j} \rightarrow 0$ as $j\to\infty$.
\end{proposition}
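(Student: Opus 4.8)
The plan is to derive everything from the summability estimate \eqref{K} of Proposition~\ref{prop:bounded-subsequence}. Since $x^{k+1}-x^k=\tau_k d^k$, that estimate reads $\sum_k\tau_k^2\|d^k\|^2<\infty$, so along the bounded subsequence $\tau_{k_j}^2\|d^{k_j}\|^2\to0$. Hence it suffices to prove the single lower bound $\inf_j\tau_{k_j}>0$: once it holds, $\|d^{k_j}\|\to0$ follows at once. The whole argument therefore reduces to the standard line-search mechanism, namely that whenever $\tau_{k_j}<1$ the preceding trial value $s_j:=\tau_{k_j}/\beta$ must have violated the sufficient decrease condition \eqref{eq:linesearch}, which I will turn into a positive lower estimate on $s_j$.

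Before running that mechanism I would record two boundedness facts. On a compact set containing the bounded iterates, $g_0,g_1\in{\cal C}^{1,1}$ and the local Lipschitz continuity of $h_0,h_1$ make $\nabla g_0(x^{k_j}),\nabla g_1(x^{k_j})$ and the chosen subgradients $v_0^{k_j}\in\partial h_0(x^{k_j})$, $v_1^{k_j}\in\partial h_1(x^{k_j})$ uniformly bounded in $j$. To bound $\{d^{k_j}\}$ I would exploit optimality of $d^{k_j}$ in \eqref{direction_sub}: comparing the value at $d^{k_j}$ with the value at $d=0$ and using $x^{k_j},x^{k_j}+d^{k_j}\in X$ gives $\tfrac{\alpha}{2}\|d^{k_j}\|^2\le\tilde\phi_{k_j}(x^{k_j})-\tilde\phi_{k_j}(x^{k_j}+d^{k_j})$; bounding the right-hand side through the $1$-Lipschitzness of $\max\{\cdot,0\}$, Cauchy--Schwarz, and $p_{k_j}\ge p_0$, then dividing by $\|d^{k_j}\|$, yields the uniform bound $\|d^{k_j}\|\le\tfrac{2}{\alpha}\big(\tfrac{1}{p_0}\|\nabla g_0(x^{k_j})-v_0^{k_j}\|+\|\nabla g_1(x^{k_j})-v_1^{k_j}\|\big)$. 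Consequently all trial points $x^{k_j}+s d^{k_j}$ with $s\in[0,1]$ lie in a fixed compact convex subset $K\subset X$.

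The core step is a uniform quadratic overestimate of $\phi_{p_{k_j}}=\tfrac{1}{p_{k_j}}\varphi_0+\max\{\varphi_1,0\}$ along $d^{k_j}$. Treating the two parts separately, the descent lemma for the ${\cal C}^{1,1}$ functions $g_0,g_1$ together with the prox-regularity inequality of Definition~\ref{definition prox-regular} applied to $h_0,h_1$ at $x^{k_j}$ for $v_0^{k_j},v_1^{k_j}$ gives, for $s\in(0,1]$,
\begin{equation*}
\varphi_i(x^{k_j}+s d^{k_j})-\varphi_i(x^{k_j})\le s\langle\nabla g_i(x^{k_j})-v_i^{k_j},d^{k_j}\rangle+\tfrac{C_i}{2}s^2\|d^{k_j}\|^2,\quad i=0,1,
\end{equation*}
with $C_i=L_i+r_i$. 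Feeding the $i=1$ bound into $\max\{\cdot,0\}$ and using its monotonicity, $1$-Lipschitzness, and convexity (so that $\max\{a+s\ell,0\}\le(1-s)\max\{a,0\}+s\max\{a+\ell,0\}$), together with the directional-derivative estimate \eqref{prop1_eq4} already established inside Proposition~\ref{prop:stepsize_existence}, I would obtain
\begin{equation*}
\phi_{p_{k_j}}(x^{k_j}+s d^{k_j})-\phi_{p_{k_j}}(x^{k_j})\le s\Big[\tfrac{1}{p_{k_j}}\langle\nabla g_0(x^{k_j})-v_0^{k_j},d^{k_j}\rangle+\lambda_{k_j}\langle\nabla g_1(x^{k_j})-v_1^{k_j},d^{k_j}\rangle\Big]+\tfrac{C}{2}s^2\|d^{k_j}\|^2,
\end{equation*}
where $C:=C_0/p_0+C_1$. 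The descent inequality \eqref{eq:descent_direction} bounds the bracket by $-\alpha\|d^{k_j}\|^2$, so the right-hand side is at most $-s\alpha\|d^{k_j}\|^2+\tfrac{C}{2}s^2\|d^{k_j}\|^2$. Thus, if $\tau_{k_j}<1$, inserting $s=s_j$ into the violated condition \eqref{eq:linesearch} gives $-\sigma s_j\alpha\|d^{k_j}\|^2<-s_j\alpha\|d^{k_j}\|^2+\tfrac{C}{2}s_j^2\|d^{k_j}\|^2$; for $d^{k_j}\ne0$, dividing by $s_j\|d^{k_j}\|^2$ leaves $s_j>2(1-\sigma)\alpha/C$, whence $\tau_{k_j}=\beta s_j\ge2\beta(1-\sigma)\alpha/C$. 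Combined with the trivial value $\tau_{k_j}=1$ in the remaining cases, this gives $\inf_j\tau_{k_j}\ge\min\{1,2\beta(1-\sigma)\alpha/C\}>0$, and the claim $d^{k_j}\to0$ then follows from \eqref{K} as explained.

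I expect the main obstacle to be the uniformity of $C$ across the subsequence, i.e.\ obtaining a single prox-regularity modulus $r_i$ valid at every base point $x^{k_j}$. Prox-regularity in Definition~\ref{definition prox-regular} is pointwise, with $\varepsilon,r$ a priori depending on the point, so the quadratic overestimate above must be upgraded to one holding uniformly over the compact set $K$. I would handle this by a compactness argument: cover $K$ by finitely many neighborhoods on which the prox-regularity inequality holds, use the local Lipschitz bound to confine the subgradients $v_i^{k_j}$ to a fixed ball, and restrict to small $\|s d^{k_j}\|$, which is legitimate since only trial steps $s=s_j\to0$ with bounded $\|d^{k_j}\|$ enter the line-search estimate. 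This produces finite uniform moduli $L_i,r_i$, hence a uniform $C$, closing the argument.
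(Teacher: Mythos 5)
Your proposal is correct and uses essentially the same mechanism as the paper's proof: the violated line-search inequality at the previous trial step $\beta^{-1}\tau_{k_j}$, combined with the descent lemma for $g_i$, the prox-regularity quadratic minorant for $h_i$, the convexity of $\max\{\cdot,0\}$ together with Lemma~\ref{lemma:multiplier_inequality}, and the bound \eqref{eq:descent_direction}, yielding $\beta^{-1}\tau_{k_j}\gtrsim 2(1-\sigma)\alpha/C$. The only difference is organizational: the paper argues by contradiction along a sub-subsequence with $\tau\to0$ converging to a single point $\bar x$, so all moduli need only be local near $\bar x$, which sidesteps the uniform-constant issue you flag (and which your compactness/covering fix, or equivalently the lower-$\mathcal{C}^2$ argument the authors use in Section~\ref{sec:plk}, also resolves).
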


\begin{proof}\,
Assume on the contrary that there exists a subsequence $\{\tau_{k_l}\}$ of $\{\tau_{k_j}\}$ such that $\tau_{k_l} \rightarrow 0$ as $l \rightarrow \infty$. Without loss of generality, suppose that $\tau_{k_l} < 1$ for all $l$.
The backtracking linesearch strategy yields 
\begin{equation}\label{violate linesearch}
\phi_{p_{k_l}}\left(x^{k_l} + \beta^{-1}\tau_{k_l} d^{k_l}\right) > \tilde\phi_{k_l}\left(x^{k_l} + \beta^{-1}\tau_{k_l} d^{k_l}\right) +  (1-\sigma)\alpha\beta^{-1}\tau_{k_l}\|d^{k_l}\|^2\;\mbox{ for all }\;l\in\mathbb N.
\end{equation}

Since the sequence $\{x^{k_l}\}$ is bounded, we may assume that it converges to some point $\bar{x}\in X$. It follows from the continuous differentiability of the functions $g_i$ on $X$ that $\nabla g_i(x^{k_l}) \to \nabla g_i(\bar{x})$ for $i \in \{0\} \cup \mathcal{I}$. The local Lipschitz continuity of $h_i$ on $X$ and the subgradient inclusions $v_i^{k_l} \in \partial h_i(x^{k_l})$ ensure that $h_i(x^{k_l}) \to h_i(\bar{x})$ and that the sequences $\{v_i^{k_l}\}$ are bounded for all $i \in \{0\} \cup \mathcal{I}$. By passing to a subsequence if necessary, we obtain $v_i^{k_l} \to \bar{v}_i$ as $l \to \infty$ for all $i \in \{0\} \cup \mathcal{I}$. Due to the outer semicontinuity of the limiting subdifferential mappings $\partial h_i$, we then have $\bar{v}_i \in \partial h_i(\bar{x})$ for $i\in
\{0\}\cup\mathcal I$.

It follows from Proposition~\ref{prop:bounded-subsequence} that $\tau_k \|d^k\| = \|x^{k+1} - x^k\| \to 0$, which implies in turn that $x^{k_l} + \beta^{-1}{\tau_{k_l}} d^{k_l} \rightarrow \bar{x}$ as $l \rightarrow \infty$. 
Since  \( g_i \) for \( i\in\{0\} \cup\mathcal{I}\) are of class ${\cal C}^{1,1}$ on \( X \), there are \( r_{g_i} > 0 \) for \( i\in\{0\} \cup\mathcal{I}\) such that
\begin{equation}
g_i\left(x^{k_l} + \beta^{-1} \tau_{k_l} d^{k_l}\right) \leq g_i(x^{k_l}) + \beta^{-1} \tau_{k_l} \langle \nabla g_i(x^{k_l}), d^{k_l} \rangle + \frac{r_{g_i}}{2} \beta^{-2} \tau_{k_l}^2 \|d^{k_l}\|^2, \quad i\in\{0\} \cup\mathcal{I},  \label{g_descent} 
\end{equation}
for all $l\in\mathbb N$ sufficiently large; see, e.g., \cite[Lemma A.11]{IzmailovSolodov2014}). Furthermore, the prox-regularity of \( h_i \) at $\bar{x}$ gives us constants \( r_{h_i} > 0 \) for \( i\in\{0\} \cup\mathcal{I}\) ensuring the fulfillment of the estimates
\begin{equation}
h_i\left(x^{k_l} + \beta^{-1} \tau_{k_l} d^{k_l}\right) \ge  h_i(x^{k_l}) + \beta^{-1} \tau_{k_l} \langle v_i^{k_l}, d^{k_l} \rangle - \frac{r_{h_i}}{2} \beta^{-2} \tau_{k_l}^2 \|d^{k_l}\|^2\label{h_descent} 
\end{equation}
for \( i\in\{0\} \cup\mathcal{I}\) and all large $l\in\mathbb N$. 
Combining \eqref{g_descent} and \eqref{h_descent} yields
\begin{equation}\label{gh_descent}
\begin{aligned}
&  g_i\left(x^{k_l} + \beta^{-1} \tau_{k_l} d^{k_l}\right) - h_i\left(x^{k_l} + \beta^{-1} \tau_{k_l} d^{k_l}\right)  \\ 
\leq \, &g_i\left(x^{k_l}\right) - h_i\left(x^{k_l}\right) + \beta^{-1} \tau_{k_l} \langle \nabla g_i(x^{k_l}) - v_i^{k_l}, d^{k_l} \rangle + \frac{r_{g_i} + r_{h_i}}{2} \beta^{-2} \tau_{k_l}^2 \|d^{k_l}\|^2 
\end{aligned}
\end{equation}
for \( i\in\{0\} \cup\mathcal{I}\) and all large $l\in\mathbb N$. 
Then using \eqref{gh_descent} and an argument similar to that in the proof of Proposition~\ref{lemma:stepsize_existence} for deriving
\eqref{eq:quadratic_bound}, we can show that
\begin{equation}\label{merit function decent}
\begin{aligned}
\phi_{p_{k_l}} \left(x^{k_l} + \beta^{-1} \tau_{k_l} d^{k_l}\right) \leq \tilde\phi_{k_l} \left(x^{k_l} + \beta^{-1} \tau_{k_l} d^{k_l}\right)+\left(\frac{r_{g_0} + r_{h_0}}{2p_{k_l}} + \sum_{i\in\mathcal I} \frac{r_{g_i} + r_{h_i}}{2}\right)\beta^{-2}\tau_{k_l}^2 \|d^{k_l}\|^2.
\end{aligned}
\end{equation}
Combining \eqref{violate linesearch} and \eqref{merit function decent} and dividing by $\beta^{-1}\tau_{k_l}\|d^{k_l}\|^2$ yields 
\begin{equation*}
(1-\sigma)\alpha < \left(\frac{r_{g_0} + r_{h_0}}{2p_{k_l}} + \sum_{i\in\mathcal I} \frac{r_{g_i} + r_{h_i}}{2}\right)\beta^{-1}\tau_{k_l}
\end{equation*}
for large $l$. Passing to the limits as $l\to\infty$ in the above inequality and noting that $\tau_{k_l} \rightarrow 0$, we obtain $(1-\sigma)\alpha \leq 0$, which contradicts the fact that $0<\sigma < 1$ and $\alpha>0$. Therefore, $\inf_{j \in \mathbb{N}} \tau_{k_j} > 0$, and then the claimed convergence $d^{k_j} \rightarrow 0$ follows from $\tau_{k_j}\|d^{k_j}\| = \|x^{k_j+1} - x^{k_j} \| \rightarrow 0$ as established in Proposition~\ref{prop:bounded-subsequence}.
\end{proof}\vspace*{0.07in}

Based on the above propositions, we now establish the convergence theorem for CDP-ESQM.

\begin{theorem}\label{general problem convergence}
Let \( \{x^k\} \) and $\{p_k\}$ be the sequences generated by the algorithm CDP-ESQM. 
Then this algorithm either terminates at a stationary point of the difference programming problem {\rm(\ref{difference-programming})}, or for any subsequence $\{x^{k_j}\}$ with $x^{k_j} \rightarrow \bar{x}$ as $j \rightarrow \infty$ we have the following assertions:
\begin{enumerate}  
\item[\bf(i)] If $\{p_k\}$ is bounded, then \( \bar{x} \) is a stationary point of {\rm(\ref{difference-programming})}.
    
\item[\bf(ii)] If $\{p_k\}$ is unbounded, then there exist $\bar{\lambda}_i \in [0,1]$, for $i\in\mathcal{I}$ such that $\bar{\lambda}_i =0$ whenever $\varphi_i(\bar x)<0$, $\bar{\lambda}_i =1$ whenever $\varphi_i(\bar x)>0$, and
\begin{equation*}
    \begin{aligned}
        0\in\sum_{i\in\mathcal I}\lambda_i\left(\nabla g_i(\bar x)-\partial h_i(\bar x)\right)+\mathcal{N}_X(\bar x).
    \end{aligned}
\end{equation*}
\end{enumerate}  
Moreover, if the sequence $\{x^k\}$ is bounded, then the set of its accumulation points is nonempty, closed, and connected. If finally the sequence \( \{x^k\} \) has an isolated accumulation point \( \bar x \), then the entire sequence \( \{x^k\} \) converges to \( \bar x \) as $k\to\infty$.
\end{theorem}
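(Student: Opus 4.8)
The plan is to dispose of the terminating case directly and then analyze an arbitrary convergent subsequence $x^{k_j}\to\bar x$, branching on whether the penalty sequence $\{p_k\}$ stays bounded. If CDP-ESQM stops at iteration $k$, then $d^k=0$ and $\varphi_1(x^k)\le0$. Substituting $d^k=0$ into the first-order condition \eqref{eq:foc} of Lemma~\ref{lem:dk_foc} and scaling by $p_k$ gives $0\in\nabla g_0(x^k)-v_0^k+p_k\lambda_k(\nabla g_1(x^k)-v_1^k)+\mathcal N_X(x^k)$; putting $\lambda:=p_k\lambda_k\ge0$ and reading the complementarity off \eqref{eq:multiplier} (which forces $\lambda_k=0$ when $\varphi_1(x^k)<0$) recovers the KKT system \eqref{kkt}, so $x^k$ is stationary in the sense of Definition~\ref{def:critical-point}.

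For the non-terminating case, fix $x^{k_j}\to\bar x$. Being bounded, this subsequence satisfies $d^{k_j}\to0$ by Proposition~\ref{prop:tau-lower-bound}, and after passing to a further subsequence I may assume $v_i^{k_j}\to\bar v_i\in\partial h_i(\bar x)$ (local Lipschitz continuity bounds the subgradients, and outer semicontinuity/robustness of $\partial h_i$ places the limit in $\partial h_i(\bar x)$) and $\lambda_{k_j}\to\bar\lambda\in[0,1]$, while $\nabla g_i(x^{k_j})\to\nabla g_i(\bar x)$ and $x^{k_j}+d^{k_j}\to\bar x$. In case (i), monotonicity and boundedness of $\{p_k\}$ make it eventually constant, so $p_{k_j}\to\bar p>0$; passing to the limit in \eqref{eq:foc} (using $\alpha d^{k_j}\to0$ and outer semicontinuity of $\mathcal N_X$) and setting $\lambda:=\bar p\,\bar\lambda$ produces the stationarity inclusion. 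Feasibility $\varphi_1(\bar x)\le0$ follows because an eventually constant penalty means the update never triggers, whence $t^{k_j}<c_p\|d^{k_j}\|\to0$ and $t^{k_j}\to\max\{\varphi_1(\bar x),0\}$; complementarity follows by taking limits in \eqref{eq:multiplier}.

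The delicate part is the unbounded-penalty regime $p_{k_j}\to\infty$ underlying (ii) and (iii), where the $\frac{1}{p_{k_j}}$-weighted objective term in \eqref{eq:foc} disappears in the limit, leaving $0\in\bar\lambda(\nabla g_1(\bar x)-\bar v_1)+\mathcal N_X(\bar x)$. When $\varphi_1(\bar x)\ge0$ and $\bar\lambda>0$ this already yields case (iii) with $(\hat\lambda_0,\hat\lambda)=(0,\bar\lambda)\neq0$; and if $\varphi_1(\bar x)>0$ I first observe $t^{k_j}\to\varphi_1(\bar x)>0$, which via \eqref{eq:multiplier} forces $\lambda_{k_j}=1$, hence $\bar\lambda=1$. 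In case (ii), $\varphi_1(\bar x)<0$ forces $t^{k_j}=0$ and thus $\lambda_{k_j}=0$ for large $j$, so the plain limit is vacuous and I must rescale \eqref{eq:foc} by $p_{k_j}$. This is where the main obstacle lies: the rescaled proximal contribution $p_{k_j}\alpha d^{k_j}$ is only known to be bounded — the descent inequality \eqref{eq:descent_direction} combined with Cauchy–Schwarz gives $\alpha\|d^{k_j}\|\le\frac{1}{p_{k_j}}\|\nabla g_0(x^{k_j})-v_0^{k_j}\|+\lambda_{k_j}\|\nabla g_1(x^{k_j})-v_1^{k_j}\|$, hence $p_{k_j}\alpha\|d^{k_j}\|$ is bounded but need not vanish. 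I expect to treat this term, and symmetrically the borderline subcase $\bar\lambda=0$, $\varphi_1(\bar x)=0$ of (iii), through a single normalization of the coefficient pair $\big(\frac{1}{p_{k_j}},\lambda_{k_j}\big)$ combined with the non-increment bound $t^{k_j}<c_p\|d^{k_j}\|$, which ties the magnitude of $d^{k_j}$ to the constraint linearization and should be what absorbs the rescaled proximal term into the limiting normal cone $\mathcal N_X(\bar x)$.

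Finally, the topological assertions rest solely on $\|x^{k+1}-x^k\|=\tau_k\|d^k\|\to0$, which the summability \eqref{K} provides. For bounded $\{x^k\}$ the accumulation set is nonempty by Bolzano–Weierstrass and closed as the set of limit points of any sequence. Connectedness is the standard Ostrowski argument: were the accumulation set split into two closed pieces separated by $3\delta>0$, then, since consecutive iterates are eventually $\delta$-close, the sequence would have to deposit iterates in the separating gap each time it migrates between the pieces, and those gap iterates would yield a forbidden accumulation point. The isolated-accumulation-point statement follows in the same spirit: an isolated point of a connected accumulation set must be the entire set, forcing convergence, and more generally $\|x^{k+1}-x^k\|\to0$ by itself prevents the sequence from leaving and re-entering a punctured neighborhood of an isolated accumulation point infinitely often without manufacturing a second one.
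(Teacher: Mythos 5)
Your treatment of the terminating case, of case (i), and of the closing topological assertions matches the paper's proof step for step: the paper likewise scales \eqref{eq:foc} by $p_K$ at a terminating iterate to recover \eqref{kkt}, uses the eventual constancy of the bounded monotone sequence $\{p_k\}$ together with the non-triggering of the penalty update to get $t^{k_j}<c_p\|d^{k_j}\|\to0$ and hence feasibility of $\bar x$, passes to the limit in \eqref{eq:foc} and \eqref{eq:multiplier} exactly as you do, and then invokes the Ostrowski condition $\|x^{k+1}-x^k\|\to0$ from Proposition~\ref{prop:bounded-subsequence} (citing Facchinei--Pang) for the statements about the accumulation set.

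The gap is in cases (ii) and (iii). You correctly isolate the crux --- after multiplying \eqref{eq:foc} by $p_{k_j}$ in case (ii), or normalizing the coefficient pair $(1/p_{k_j},\lambda_{k_j})$ in the degenerate subcase $\bar\lambda=0$ of (iii), the rescaled proximal term is only known to be bounded, not vanishing --- but you then stop at ``I expect to treat this term \dots\ which should be what absorbs the rescaled proximal term into the limiting normal cone.'' That is a plan, not a proof. Concretely, your own estimate gives $p_{k_j}\alpha\|d^{k_j}\|\le\|\nabla g_0(x^{k_j})-v_0^{k_j}\|$ in case (ii), so along a further subsequence $p_{k_j}\alpha d^{k_j}\to w$ for some $w$, and the limit of the rescaled inclusion reads $0\in\nabla g_0(\bar x)-\bar v_0+w+\mathcal N_X(\bar x)$; this is the required stationarity condition only if $w=0$ or $w$ can be shown to lie in a direction already absorbed by $\mathcal N_X(\bar x)$, and the non-increment bound $t^{k_j}<c_p\|d^{k_j}\|$ constrains the constraint linearization, not the direction or magnitude of $w$. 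The identical difficulty reappears in the normalized version needed to produce a nonzero pair $(\hat\lambda_0,\hat\lambda)$ in (iii) when $\bar\lambda=0$. For what it is worth, the paper's own argument is terse at precisely this point: in case (ii) it writes the $p_{k_j}$-rescaled inclusion with the proximal contribution appearing simply as $d^{k_j}$ and passes to the limit ``noting that $d^{k_j}\to0$,'' i.e.\ it treats the rescaled term as negligible without comment. So you have located the genuinely delicate step of the theorem, but your proposal leaves it open; a complete argument must either establish $p_{k_j}\alpha d^{k_j}\to0$ (respectively, that the normalized proximal term vanishes) or supply a different route to the limiting inclusions.
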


\begin{proof} \, Suppose first that CDP-ESQM terminates in finite steps, and thus there exists an index $K\in\mathbb N$ such that $d^K = 0$ and $\varphi_i(x^K) \le 0 \text{ for all } i\in\mathcal{I}$. Since $x^K \in X$, this point is feasible to the constrained difference programming (\ref{difference-programming}). Furthermore, since $d^K = 0$ solves the convex subproblem \eqref{direction_sub}, Lemma~\ref{lem:dk_foc} tells us that there exist $\lambda_{k,i} \in [0,1]$ ($i\in\mathcal I$) satisfying \eqref{eq:foc} and \eqref{eq:multiplier}. Consequently, we have the conditions
\begin{equation*}
\begin{aligned}
0 \in \nabla g_0(x^K)-v_0^K + p_K \sum_{i\in\mathcal I} \lambda_{k,i} \left(\nabla g_i(x^K)-v_i^K\right) + \mathcal N_X \left(x^K\right), \quad \lambda_{k,i} \varphi_i(x^K) = 0, \quad i\in\mathcal I.
\end{aligned}
\end{equation*}
Taking into account that $v_i^K \in \partial h_i(x^K)$ for $i \in \{0\} \cup \mathcal I$ guarantees that $x^K$ is a stationary point of the constrained difference programming problem (\ref{difference-programming}).

Consider next a subsequence $\{x^{k_j}\}$ such that $x^{k_j} \rightarrow \bar{x}$ as $j \rightarrow \infty$ with $\bar{x} \in X$. Following the same reasoning as in the proof of Proposition~\ref{prop:tau-lower-bound} and remembering that the functions $g_i \ (i\in\{0\}\cup \mathcal I)$ are continuously differentiable on $X$ ensure that $\nabla g_i(x^{k_j}) \to \nabla g_i(\bar{x})$ as $j\to\infty$ for $i \in \{0\} \cup \mathcal I$. It follows from the local Lipschitz continuity of $h_i \ (i\in\{0\}\cup\mathcal I)$ on $X$ that $h_i(x^{k_j}) \rightarrow h_i(\bar{x})$, and that the sequences $\{v_i^{k_j}\}$ are bounded for $i\in\{0\}\cup\mathcal I$. Without loss of generality, suppose that $v_i^{k_j} \rightarrow \bar{v}_i \in \partial h_i(\bar{x})$ as $j \to \infty$ for $i \in \{0\} \cup \mathcal I$. 

By Lemma~\ref{lem:dk_foc}, there {exist $\lambda_{k_j,i} \in [0, 1]$ ($i\in\mathcal I$)} satisfying \eqref{eq:foc} and \eqref{eq:multiplier} at the $k_j$-th iteration. Since ${\{\lambda_{k_j,i}\}}$ are bounded, we can assume that ${\lambda_{k_j,i} \to \bar{\lambda}_i \in [0, 1] \text{ for each } i\in\mathcal I}$. 
Furthermore, Proposition~\ref{prop:tau-lower-bound} ensures that $d^{k_j} \rightarrow 0$ as $j \rightarrow \infty$. If $\{p_k\}$ is bounded, then there exists a natural number $k_0 > 0$ such that $p_k = p_{k_0}$ for all $k \ge k_0$. The penalty update strategy in CDP-ESQM tells us that for all $k_j \ge k_0$ we have
\begin{equation}
c_p \|d^{k_j}\| > t^{k_j} = {\sum_{i\in\mathcal I} \max \left\{ \varphi_i( x^{k_j}) + \langle \nabla g_i(x^{k_j}) - v_i^{k_j}, d^{k_j} \rangle, 0 \right\}}.
\end{equation}
Passing to the limit as $j \to \infty$ in the above inequality and noting that $d^{k_j} \rightarrow 0$ give us ${\sum_{i\in\mathcal I} \max\{\varphi_i(\bar{x}), 0\} \le 0}$, which verifies that ${\varphi_i(\bar{x}) \le 0 \text{ for all } i\in\mathcal I}$, meaning $\bar{x}$ is a feasible solution to the constrained difference programming problem (\ref{difference-programming}). Taking $k = k_j$ and passing to the limit as $j \to \infty$ in \eqref{eq:foc} and \eqref{eq:multiplier} lead us to
\begin{equation*}
0 \in \nabla g_0(\bar{x}) - \bar{v}_0 
+ p_{k_0} {\sum_{i\in\mathcal I} \bar{\lambda}_i \left( \nabla g_i(\bar{x}) - \bar{v}_i \right)} + \mathcal N_X(\bar{x}),  \quad {\bar{\lambda}_i\varphi_i(\bar{x}) = 0, \quad i\in\mathcal I.}
\end{equation*}
Since $\bar{v}_i \in \partial h_i(\bar{x})$ for ${i \in \{0\} \cup  \mathcal I}$, we conclude that $\bar{x}$ is a stationary point of the constrained difference programming problem (\ref{difference-programming}).

On the other hand, if $\{p_k\}$ is unbounded, then by taking $k = k_j$ and passing to the limit as \( j \to \infty \) in \eqref{eq:foc} and \eqref{eq:multiplier}, and noting that $d^{k_j} \rightarrow 0$ while $G_k$ is bounded by Assumption \ref{ass_G}, we get the existence of $\bar{\lambda}_i \in [0,1]$, for $i\in\mathcal{I}$ satisfying
\[
 0 \in \sum_{i\in\mathcal{I}} \bar{\lambda}_i \left( \nabla g_i(\bar{x}) - \bar{v}_i \right) + \mathcal N_X(\bar{x}),
\]
and 
\[
\bar{\lambda}_i \min\left\{ \varphi_i(\bar{x}), 0 \right\}=(1- \bar{\lambda}_i)\max\left\{ \varphi_i(\bar{x}) , 0 \right\} =0.
\]

Finally, Proposition~\ref{prop:bounded-subsequence} tells us that $\lim_{k \to \infty}\|x^{k+1} - x^k\| = 0$, which is the classical Ostrowski convergence condition. Thus the remaining results follow directly from \cite[Theorem~8.3.9 and Proposition~8.3.10]{FacchineiPang2003}, and we are done with the proof of the theorem.
\end{proof}\vspace*{0.05in}

Theorem~\ref{general problem convergence} verifies, in particular, that the boundedness of the penalty parameter $p_k$ ensures that any accumulation point of the sequence $\{x^k\}$ is a stationary point. The following proposition shows that the sequence of penalty parameters \( \{p_k\} \) remains bounded under the ENNAMCQ condition.

\begin{proposition}\label{boundedness of penalty parameter sequence}
Suppose that CDP-ESQM does not terminate, and  let \( \{x^k\} \) and $\{p_k\}$ be the sequences generated by this algorithm. If ENNAMCQ holds at any accumulation points of $\{x^{k}\}$, then the sequence of penalty parameters  $\{p_k\}$ is bounded.
\end{proposition}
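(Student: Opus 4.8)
The plan is to argue by contradiction: suppose $\{p_k\}$ is unbounded. Because the penalty is raised by the fixed amount $\varrho_p$ exactly when the test $t^k \ge c_p\|d^k\|$ succeeds, unboundedness forces $p_k \to \infty$ and forces this test to succeed at infinitely many indices. I collect those indices into a subsequence $\{k_j\}$, so that $t^{k_j} \ge c_p\|d^{k_j}\|$ for all $j$, and then pass to a further bounded subsequence along which $x^{k_j} \to \bar x$; this $\bar x$ is an accumulation point of $\{x^k\}$, so ENNAMCQ holds at $\bar x$ by hypothesis. Since the convergent subsequence is bounded, Proposition~\ref{prop:tau-lower-bound} yields $d^{k_j} \to 0$, while the continuity of $\nabla g_0, \nabla g_1$, the local Lipschitz continuity of $h_0, h_1$, and the outer semicontinuity (robustness) of $\partial h_0, \partial h_1$ let me extract $v_i^{k_j} \to \bar v_i \in \partial h_i(\bar x)$ for $i=0,1$, exactly as in the proof of Proposition~\ref{prop:tau-lower-bound}.

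The crucial observation is that along the penalty-increasing subsequence the multiplier must equal one. Indeed, on these iterations $t^{k_j} > 0$: if $d^{k_j} \ne 0$ this follows from $t^{k_j} \ge c_p\|d^{k_j}\| > 0$, and if $d^{k_j} = 0$ then non-termination of CDP-ESQM forces $\varphi_1(x^{k_j}) > 0$, whence $t^{k_j} = \varphi_1(x^{k_j}) > 0$. The complementarity relation \eqref{eq:multiplier} reads $(1-\lambda_{k_j})\,t^{k_j} = 0$, so $t^{k_j} > 0$ gives $\lambda_{k_j} = 1$ for every $j$.

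With $\lambda_{k_j} = 1$ fixed, I pass to the limit in the two defining relations. From the identity $t^{k_j} = \varphi_1(x^{k_j}) + \langle \nabla g_1(x^{k_j}) - v_1^{k_j}, d^{k_j}\rangle$, using $d^{k_j} \to 0$, $x^{k_j} \to \bar x$, and the established convergences, I get $t^{k_j} \to \varphi_1(\bar x)$; combining this with $t^{k_j} \ge c_p\|d^{k_j}\| \to 0$ yields $\varphi_1(\bar x) \ge 0$. Next I take the limit in the first-order inclusion \eqref{eq:foc} with $\lambda_{k_j} = 1$: since $p_{k_j} \to \infty$ the term $\frac{1}{p_{k_j}}\bigl(\nabla g_0(x^{k_j}) - v_0^{k_j}\bigr)$ vanishes in the limit (its bracket stays bounded), the proximal term $\alpha d^{k_j} \to 0$, the base point $x^{k_j} + d^{k_j} \to \bar x$, and the robustness of the normal-cone mapping $\mathcal N_X$ for convex $X$ gives $0 \in \nabla g_1(\bar x) - \bar v_1 + \mathcal N_X(\bar x)$. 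Since $\bar v_1 \in \partial h_1(\bar x)$, this means $0 \in \nabla g_1(\bar x) - \partial h_1(\bar x) + \mathcal N_X(\bar x)$, i.e., \eqref{NNAMCQ} fails at $\bar x$. But $\varphi_1(\bar x) \ge 0$, so ENNAMCQ at $\bar x$ precisely requires \eqref{NNAMCQ} to hold, a contradiction. Hence $\{p_k\}$ is bounded.

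I expect the main obstacle to be the bookkeeping that forces $\lambda_{k_j} = 1$ on exactly the penalty-increasing iterations: this is what makes the surviving limit of \eqref{eq:foc} reproduce the NNAMCQ expression in $g_1, h_1$ alone, after the objective contribution is annihilated by $p_{k_j} \to \infty$. A secondary point requiring care is securing a convergent (hence bounded) subsequence of $\{x^{k_j}\}$ so that Proposition~\ref{prop:tau-lower-bound} applies and $\bar x$ qualifies as an accumulation point at which ENNAMCQ is assumed; this is the step where the accumulation-point hypothesis is genuinely used.
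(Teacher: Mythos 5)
Your argument is correct and follows essentially the same route as the paper's proof: contradiction via an infinite subsequence of penalty-increasing iterations, forcing $\lambda_{k_j}=1$ through the complementarity relation \eqref{eq:multiplier}, deducing $\varphi_1(\bar x)\ge 0$, and passing to the limit in \eqref{eq:foc} (with the objective term annihilated by $p_{k_j}\to\infty$ and $d^{k_j}\to 0$ from Proposition~\ref{prop:tau-lower-bound}) to contradict ENNAMCQ. Your explicit handling of the case $d^{k_j}=0$ (where non-termination forces $\varphi_1(x^{k_j})>0$) is slightly more careful than the paper's bare assertion that $t^{k_j}\ge c_p\|d^{k_j}\|>0$, but the substance is identical.
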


\begin{proof}\,
Assume on the contrary that $\{p_k\}$ is unbounded. By the penalty update rule in CDP-ESQM, there exists a subsequence $\{x^{k_j}\}$ such that $t^{k_j} \ge c_p\|d^{k_j}\| > 0$. Since $\{x^{k_j}\}$ is bounded, we can assume without loss of generality that $x^{k_j}$ converges to some $\bar{x} \in X$. Because $t^{k_j} > 0$, it follows that $\sum_{i\in\mathcal{I}}\max\{\varphi_i(\bar{x}),0\} \ge 0$. Otherwise, $\varphi_i(\bar{x}) <0 $ for all $i\in\mathcal{I}$, and by the continuity of $g_i$ and $h_i$ around $\bar{x}$, we have that 
$$
\varphi_i(x^{k_j}) + \langle \nabla g_i(x^{k_j}) - v_i^{k_j}, d^{k_j} \rangle < 0
$$
for all $i\in\mathcal{I}$ and all $j$ sufficiently large, which clearly contradicts $t^{k_j} > 0$. 
By Lemma~\ref{lem:dk_foc}, there exist $\lambda_{k_j,i} \in [0, 1]$ for $i\in\mathcal{I}$ such that the inclusion 
\[
0 \in \frac{1}{p_{k_j}}\left(\nabla g_0(x^{k_j})-v_0^{k_j}\right) + \sum_{i\in\mathcal{I}} \lambda_{k_j,i} \left(\nabla g_i(x^{k_j})-v_i^{k_j} \right) + G_{k_j} d^{k_j}+\mathcal N_X \left(x^{k_j}+d^{k_j}\right)
\]
is satisfied, and the complementarity condition \eqref{eq:multiplier} holds for $k = k_j$ and all $i\in\mathcal{I}$. 

Since $t^{k_j} > 0$, there exists an index $i_{k_j}\in\mathcal{I}$ such that $\varphi_{i_{k_j}}(x^{k_j}) + \langle \nabla g_{i_{k_j}}(x^{k_j}) - v_{i_{k_j}}^{k_j}, d^{k_j} \rangle > 0$. The complementarity condition \eqref{eq:multiplier} then yields $\lambda_{k_j,i_{k_j}} = 1$, which implies that $\sum_{i\in\mathcal{I}} \lambda_{k_j,i} \ge 1$. Furthermore, the complementarity condition \eqref{eq:multiplier} implies that for all $i\in\mathcal{I}$
\[
 \lambda_{k_j,i} \left(\varphi_{i}(x^{k_j}) + \langle \nabla g_{i}(x^{k_j}) - v_{i}^{k_j}, d^{k_j} \rangle \right) \ge 0.
\]
Next, note that $d^{k_j} \rightarrow 0$ by Proposition~\ref{prop:tau-lower-bound}, and $G_k$ is bounded by Assumption \ref{ass_G}. Since $\{\lambda_{k_j,i}\}$ are bounded within $[0, 1]$, we can pass to a further subsequence to assume that $\lambda_{k_j,i} \to \bar{\lambda}_i \in [0, 1]$ for each $i\in\mathcal{I}$. Because $\sum_{i\in\mathcal{I}} \lambda_{k_j,i} \ge 1$ for all $j$, it follows that $\sum_{i\in\mathcal{I}} \bar{\lambda}_i \ge 1$, ensuring that not all $\bar{\lambda}_i$ are equal to zero.

Taking \( j \to \infty \) in the above inclusion and the inequality,  we obtain the existence of $\bar{\lambda}_i \in [0,1]$ for $i\in\mathcal{I}$, not all zero, such that
\[
0 \in \sum_{i\in\mathcal{I}} \bar{\lambda}_i \left( \nabla g_i(\bar{x})- \partial h_i(\bar{x}) \right) + \mathcal N_X\left(\bar{x}\right), \quad \text{and} \quad \bar{\lambda}_i \varphi_{i}(\bar{x}) \ge 0, \quad \forall i\in\mathcal{I},
\]
which contradicts the imposed ENNAMCQ and thus completes the proof of the proposition.
\end{proof}\vspace*{0.05in}

\section{Sequential Convergence of CDP-ESQM under PLK Property}\label{sec:plk}
\setcounter{equation}{0}
In this section, we establish efficient conditions for the {\em global sequential convergence } of CDP-ESQM that is based on the {\em Polyak-\L ojasiewicz-Kurdyka} properties. This terminology has been recently coined in \cite{bento2025convergence}, where the reader can find detailed historical remarks and discussions. Other names for such conditions and their special cases formulated in terms of various subgradient mappings and their smooth counterparts are used in \cite{Attouch2013,Bolte2007,karimi} and the references therein.

\begin{definition}
Given an l.s.c.\ function $f:\mathbb{R}^n\to (-\infty,\infty]$,
we say that $f$ satisfies the $($basic$)$ {\em Polyak-\L ojasiewicz-Kurdyka} $($PLK$)$ property at $\bar{x} \in \mathrm{dom}(f)$ if there exist positive numbers  $\varepsilon, \eta$ and a concave function $\psi : [0,\eta) \rightarrow [0, \infty)$ such that

(a) $\psi(0) = 0$,

(b) $\psi$ is ${\cal C}^1$ on $(0,\eta)$ and continuous at $0$,

(c) $\psi' > 0$ on $(0,\eta)$, \\[1ex] 
and the following subdifferential inequality 
\[
\psi'(f(x) - f(\bar{x})) \mathrm{dist}(0, {\partial} f(x)) \ge 1
\]
holds whenever $x \in\{ x \in \mathbb{R}^n \mid  \|x - \bar{x}\| < \varepsilon, ~~ f(\bar{x}) < f(x) <f(\bar{x}) +\eta \}$.
A function $f$ satisfying the PKL condition at any point in $\mathrm{dom}(f)$ is called a {\em PKL function}.
\end{definition}

To establish the global sequential convergence of CDP-ESQM, we assume in what follows that the sequence of iterates \( \{x^k\} \) generated by CDP-ESQM is bounded. To proceed further, we construct an appropriate merit function. Since $ \{x^k\} $ is bounded, there exists a compact convex set $D \subset X$ such that $\{x^k\} \subset D$. According to our standing assumptions, {the functions $h_i$ ($i \in \mathcal I$)} are locally Lipschitz continuous and prox-regular over $X$, which implies that {they} are lower-$\mathcal{C}^2$, i.e., they can be expressed as the difference of a convex function and a quadratic function around any point in $X$. The compactness of $D$ allows us to select its finite covering and then, by using \cite[{Theorem~7.102}]{mordukhovich2022convex}, to find $\rho_h > 0$ such that the functions $h_i(x) + \frac{\rho_h}{2}\|x\|^2$, for {$i \in \mathcal I$}, are convex on $D$. Define the approximations
\[
\hat g_i(x): = g_i(x) + \frac{\rho_h}{2}\|x\|^2, \quad \hat{h}_i(x) = h_i(x) + \frac{\rho_h}{2}\|x\|^2, \quad {i \in \mathcal I,}
\]
and then construct the parametric family of {\em merit functions} by
\begin{equation}\label{eq: E definition}  
{
\begin{aligned}
E_{p}(x,\tau,d,G,\mathbf{w},\mathbf{\hat{v}}):=&\frac{1}{p}\left(\hat g_0(x)+\langle w_0,\tau d\rangle+\hat{h}_0^*(\hat v_0)-\langle \hat{v}_0,x\rangle\right)\\
&+\sum_{i\in\mathcal I} \max\left\{\hat g_i(x)+\langle w_i, \tau d\rangle+\hat{h}_i^*(\hat v_i)-\langle \hat{v}_i,x\rangle,0\right\}+\tau\langle Gd,d \rangle+\delta_X(x+d),
\end{aligned}
}
\end{equation}
where {$\mathbf{w} = (w_i)_{i\in\{0\}\cup\mathcal I}$, $\mathbf{\hat{v}} = (\hat{v}_i)_{i\in\{0\}\cup\mathcal I}$}, and $\hat{h}_i^*$ denotes the {\em Fenchel conjugate} of $\hat{h}_i(x) + \delta_D(x)$ given by 
$$
\hat{h}_i^*(\hat v):=\sup_{x \in D} \left\{\langle \hat v,x\rangle-\hat{h}_i(x)\right\}, \quad i \in \mathcal I.
$$

{
Notice that to enable the sequential convergence of the generalized CDP-ESQM algorithm under the PLK property with an iteration-varying matrix $G_k$, we designed the merit function $E_{p}$ \eqref{eq: E definition}
to explicitly incorporate $G$ as a parameter. This technique is inspired by \cite{panshaohua2024proximalnewton}. Moreover, to establish sequential convergence when employing non-unit step sizes generated by a line search procedure, the merit function is also designed to explicitly incorporate the step size $\tau$ as a parameter.
}

Having the sequences $\{x^k\}$, $\{d^k\}$, {and $\{v_i^k\}$ ($i\in \{0\} \cup \mathcal I$)} generated by CDP-ESQM, denote $w_i^k := \nabla g_i(x^k) - v_i^k$ and $\hat{v}_i^k:=v_i^k+\rho_h x^k$ for {$i \in \{0\} \cup \mathcal I$}. {Letting $\mathbf{w}^k = (w_i^k)_{i\in\{0\}\cup\mathcal I}$ and $\mathbf{\hat{v}}^k = (\hat{v}_i^k)_{i\in\{0\}\cup\mathcal I}$}, the next lemma verifies the sufficient decrease condition and the relative error condition for each merit function $E_{p}(x,\tau,d,G,{\mathbf{w},\mathbf{\hat{v}}})$ evaluated along the algorithmic sequence $\{(x^k, \tau_k,d^k, G_k,{\mathbf{w}^k, \mathbf{\hat{v}}^k})\}$.

\begin{lemma}\label{lemma: error condition of E}
Suppose that CDP-ESQM does not terminate in a finite number of steps, and let $\{x^k\}$, $\{d^k\}$, and {$\{v_i^k\}$ ($i\in\{0\}\cup\mathcal I$)} be the sequences generated by CDP-ESQM. 
Assume that both sequences $\{x^k\}$ and $\{p_k\}$ are bounded. Then, for all sufficiently large $k$, we have $p_k = \bar{p}$ with some positive constant $\bar{p}$. Defining further {$w_i^k := \nabla g_i(x^k) - v_i^k$ for $i\in\mathcal I$ and the vectors $\mathbf{w}^k = (w_i^k)_{i\in\{0\}\cup\mathcal I}$, $\mathbf{\hat{v}}^k = (\hat{v}_i^k)_{i\in\{0\}\cup\mathcal I}$}, and assuming that the sequence $\{\mathbf{w}^k\}$ is bounded, there exist positive constants $a$ and $b$ such that the following conditions hold for all sufficiently large $k$:
\begin{gather}
E_{\bar{p}}(x^{k+1}, \tau_{k+1}, d^{k+1}, G_{k+1}, {\mathbf{w}^{k+1}, \mathbf{\hat{v}}^{k+1}}) + {a \|x^{k+1} - x^k\|^2} \le E_{\bar{p}}(x^k, \tau_k, d^k, G_k, {\mathbf{w}^k, \mathbf{\hat{v}}^k}),  \label{eq: suf_decrease} \\
\mathrm{dist}\left(0, \partial E_{\bar{p}}(x^{k}, \tau_{k}, d^{k}, G_{k}, {\mathbf{w}^{k}, \mathbf{\hat{v}}^{k}}) \right) \le {b\|x^{k+1} - x^k\|}. \label{eq: rel_err}
\end{gather}
\end{lemma}

\begin{proof}\,
By the penalty update rule, the boundedness of $\{p_k\}$ yields $p_k = \bar{p}$ with some constant $\bar{p}$ when $k$ is large enough. 
Remembering that $\hat{h}_i^*$ is the Fenchel conjugate of $\hat{h}_i + \delta_D$ and that $\hat{v}_i^k \in \partial \hat{h}_i(x^k) + \mathcal{N}_D(x^k)$ gives us
\begin{equation*}
-\hat{h}_i(x^k)=\hat{h}_i^*(\hat{v}_i^k)-\langle \hat{v}_i^k,x^k\rangle, \quad {i\in\{0\}\cup\mathcal I,}
\end{equation*}
which implies in turn the following relationships valid for all large $k$:
\begin{equation}\label{eq:E_k}
\begin{aligned}
& E_{\bar p}(x^k,\tau_k,d^k,G_k,{\mathbf{w}^k,\mathbf{\hat{v}}^k})\\
  =\, &\frac{1}{\bar p}\left(\hat{g}_0(x^k)+\langle w_0^k,\tau_kd^k\rangle-\hat h_0(x^k)\right)+{\sum_{i\in\mathcal I}}\max\left\{\hat g_i(x^k)+\langle w_i^k,\tau_kd^k\rangle-\hat h_i(x^k),0\right\}+ \tau_k \langle G_kd^k,d^k\rangle\\
  = \, &\tilde\phi_k(x^k+\tau_kd^k)+ \tau_k \langle G_kd^k,d^k\rangle\\
  = \, &\tilde\phi_k(x^k+\tau_kd^k)+{(1-\sigma)\alpha\tau_k\|d^k\|^2}+\tau_k\langle  G_kd^k,d^k\rangle-{(1-\sigma)\alpha\tau_k\|d^k\|^2}\\
  \geq\, &\phi_{\bar p}(x^k+\tau_kd^k)+\tau_k\left(\langle  G_kd^k,d^k\rangle-{(1-\sigma)\alpha}\|d^k\|^2\right) {\ge \phi_{\bar p}(x^{k+1}) + \sigma\alpha\tau_k\|d^k\|^2,}
\end{aligned} 
\end{equation}
where the first inequality follows from the line search condition \eqref{eq:linesearch}, and the second uses $\langle G_k d^k, d^k \rangle \ge \alpha\|d^k\|^2$ from Assumption \ref{ass_G}. Similarly, evaluating $E_{\bar p}$ at the $(k+1)$-th iteration yields
\begin{equation}\label{eq:E_k+1}
\begin{aligned}
&E_{\bar p}(x^{k+1},\tau_{k+1},d^{k+1},G_{k+1},{\mathbf{w}^{k+1},\mathbf{\hat{v}}^{k+1}})\\
  =\, &\tilde\phi_{k+1}(x^{k+1}+\tau_{k+1}d^{k+1}) +\tau_{k+1}\langle G_{k+1}d^{k+1},d^{k+1} \rangle\\
  \leq \,& \phi_{\bar p_{k+1}}(x^{k+1}) - \tau_{k+1}\langle G_{k+1}d^{k+1},d^{k+1} \rangle  +\tau_{k+1}\langle G_{k+1}d^{k+1},d^{k+1} \rangle  = \phi_{\bar p}(x^{k+1}),
\end{aligned} 
\end{equation}
where the inequality follows from \eqref{lem36_eq2} (evaluated at $k+1$) as established in Lemma \ref{lemma:sufficient_decrease}. Combining \eqref{eq:E_k} and \eqref{eq:E_k+1} gives
\begin{equation*}
    \begin{aligned}
        &E_{\bar p}(x^k,\tau_k,d^k,G_k,{\mathbf{w}^k,\mathbf{\hat{v}}^k})-E_{\bar p}(x^{k+1},\tau_{k+1},d^{k+1},G_{k+1},{\mathbf{w}^{k+1},\mathbf{\hat{v}}^{k+1}})\\
        &\geq {\sigma\alpha\tau_k\|d^k\|^2 = \sigma\alpha\tau_k \left\| \frac{x^{k+1}-x^k}{\tau_k} \right\|^2 = \frac{\sigma\alpha}{\tau_k}\|x^{k+1}-x^k\|^2}\\
        &\geq{\sigma\alpha\|x^{k+1}-x^k\|^2},
    \end{aligned}
\end{equation*}
{since $\tau_k \le 1$. By letting $a = \sigma\alpha$, this justifies the claimed sufficient decrease condition \eqref{eq: suf_decrease}.}

To proceed further with verifying the relative error condition \eqref{eq: rel_err}, we employ the limiting subdifferential calculus rules for finite sums and finite maxima taken from \cite[Theorem~2.19 and 4.10]{mordukhovich2018variational}, together {with} the fact that the function $\hat g_i(x)+\langle w_i,\tau d\rangle+\hat{h}_i^*(\hat v_i)-\langle \hat{v}_i,x\rangle$ under the pointwise maximum operator is lower regular. The latter is due to the smoothness of $\hat g_i$ and the convexity of $\hat{h}_i^*$.
Based on these considerations, the limiting subdifferential of $E_{p}(x,\tau,d,G,{\mathbf{w},\mathbf{\hat{v}}})$ is calculated by
\begin{equation*}
    \begin{aligned}
        \partial E_p&(x,\tau,d,G,{\mathbf{w},\mathbf{\hat{v}}}) =\\
        &\left\{
\begin{bmatrix}
\xi_x\\ \xi_\tau\\ \xi_d \\ \xi_G \\ {\xi_{\mathbf{w}}}\\ {\xi_{\mathbf{\hat{v}}}}
\end{bmatrix} 
~\left| ~\begin{array}{l}
\xi_x \in \frac{1}{p}\left(\nabla g_0(x) - {\hat{v}_0} \right) + {\sum_{i\in\mathcal I} \lambda_i}\left( \nabla g_i(x) - {\hat{v}_i}\right) + \mathcal{N}_X(x+d),\\
\xi_\tau = \frac{1}{p}\langle w_0, d \rangle + {\sum_{i\in\mathcal I} \lambda_i} \langle w_i, d \rangle + \langle Gd,d\rangle, \\
\xi_d \in \frac{\tau}{p}w_0 + {\sum_{i\in\mathcal I} \lambda_i} \tau w_i + 2\tau G d + \mathcal{N}_X(x+d), \\
\xi_G = {\tau} dd^T, \quad {\xi_{w_0} = \frac{\tau}{p}d, \quad \xi_{w_i} = \lambda_i \tau d \ (i\in\mathcal I)}, \\
{\xi_{\hat{v}_0}} \in \frac{1}{p}\left(\partial \hat{h}_0^*({\hat{v}_0})-x\right), \quad {\xi_{\hat{v}_i}} \in {\lambda_i}\left(\partial \hat{h}_i^*({\hat{v}_i})-x\right) {\ (i\in\mathcal I)}, \\
{\lambda_i} \in [0,1], {\lambda_i} \min\left\{\hat{g}_i(x)+\langle w_i,\tau d\rangle+\hat{h}_i^*({\hat{v}_i})-\langle {\hat{v}_i},x\rangle, 0 \right\} = 0, \\ 
\quad\,\,\,\, (1- {\lambda_i})\max\left\{ \hat{g}_i(x)+\langle w_i,\tau d\rangle+\hat{h}_i^*({\hat{v}_i})-\langle {\hat{v}_i},x\rangle, 0 \right\} = 0 {\ (i\in\mathcal I)}
\end{array}\right. \right\}.
    \end{aligned}
\end{equation*}
Since $d^k$ solves the convex subproblem \eqref{direction_sub}, there {exist $\lambda_{k,i} \in \left[0, 1\right]$} satisfying \eqref{eq:foc} together with the equalities in \eqref{eq:multiplier}. Furthermore, the definition of $\hat{v}_i$ and the result of \cite[Theorem~4.20]{beck2017first} tell us that $x^k\in\partial\hat{h}_i^*(\hat v_i^k)$ for {$i \in \{0\}\cup\mathcal I$}.
Therefore, we arrive at the inclusion

\begin{equation*}
  \begin{aligned}
     &\left( 
     -G_kd^k, 
     \big\langle \frac{1}{\bar p}w_0^k + \sum_{i \in \mathcal{I}} \lambda_{k,i} w_i^k, d^k \big\rangle + \langle G_kd^k, d^k \rangle, 
     (2\tau_k - 1)G_k d^k, 
     \tau_k d^k (d^k)^T, 
     \tau_k \frac{d^k}{\bar p}, 
     ( \tau_k \lambda_{k,i} d^k )_{i \in \mathcal{I}}, 
     \mathbf{0}
     \right) \\ 
     &\subset \partial E_{\bar p}(x^k, \tau_k, d^k, G_k, \mathbf{w}^k, \mathbf{\hat{v}}^k)
  \end{aligned}  
\end{equation*}
Combining this with the facts that $\tau_k \in (0,1]$ and $\lambda_{k,i} \in [0,1]$ for $i \in \mathcal{I}$, yields the following estimate for all sufficiently large $k$ (where $p_k = \bar{p}$)
\begin{equation*}
    \begin{aligned}
        &\mathrm{dist}\left(0, \partial E_{\bar p}(x^k,\tau_k,d^k,G_k,\mathbf{w}^k,\mathbf{\hat{v}}^k) \right) \\
        \le \, & \left( 2\|G_k\|+ \frac{1}{\bar p}\| w^k_0\|+\sum_{i=1}^m\| w^k_i\| + \|G_k\|\|d_k\| + \|d_k\| + 1/\bar{p} + \sum_{i \in \mathcal{I}}\lambda_{k,i} \right) \|d^k\|.
    \end{aligned}
\end{equation*}
Notice that $ \inf_k \tau_k =: \underline{\tau} > 0$ by Proposition~\ref{prop:tau-lower-bound}. Furthermore, $\| x^{k+1} - x^k \| \rightarrow 0$ by Proposition \ref{prop:bounded-subsequence}, which implies $\|d_k\| = \| x^{k+1} - x^k \|/\tau_k \le \| x^{k+1} - x^k \|/\underline{\tau}  \rightarrow 0$. Since the sequence $\{\mathbf{w}^k\}$ is bounded by assumption, and $G_k$ is bounded by Assumption \ref{ass_G}, we obtain the existence of $b > 0$ such that the relative error condition \eqref{eq: rel_err} holds. This completes the proof.
\end{proof}\vspace*{0.05in}

Having in hand the sufficient decrease condition \eqref{eq: suf_decrease} and the relative error condition \eqref{eq: rel_err} for $	E_{p}$ obtained above, we now establish the global convergence of CDP-ESQM under the basic PLK property. The proof mainly follows the scheme of applying PLK conditions outlined in \cite{Attouch2013,bento2025convergence}.

\begin{theorem}\label{thm: KL_convergence}
Let  \( \{x^k\} \)  and \( \{p_k\} \) be the sequences generated by CDP-ESQM. 
If both sequences \( \{x^k\} \) and \( \{p_k\} \) are bounded, then we have $p_k = \bar{p}$  for some positive constant $\bar{p}$ when $k$ is sufficiently large. 
Assuming furtheremore that $E_{\bar{p}}$ from \eqref{eq: E definition} is a PLK function ensures that \( \{x^k\} \) converges to a stationary point of the constrained difference programming problem {\rm(\ref{difference-programming})}.
\end{theorem}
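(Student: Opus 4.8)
The plan is to run the now-standard Kurdyka--\L ojasiewicz descent argument of \cite{Attouch2013,bento2025convergence} on the merit function, exploiting the two inequalities already secured in Lemma~\ref{lemma: error condition of E}. Write $F:=E_{\bar p,\bar\alpha}$ and $z^k:=(x^k,d^k,w_0^k,w_1^k,\hat v_0^k,\hat v_1^k)$, and set $\delta_k:=\|x^{k+1}-x^k\|$. First I would record the eventual constancy $p_k=\bar p$, $\alpha_k=\bar\alpha$ (large $k$) together with $\tau_k=1$, all furnished by Proposition~\ref{prop:tau-lower-bound proximal} and Lemma~\ref{lemma: error condition of E}. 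The sufficient-decrease estimate \eqref{eq: suf_decrease} makes $\{F(z^k)\}$ nonincreasing; since $\varphi_0$ is bounded below over $X$ and the $\max$-term in \eqref{eq: E definition} is nonnegative, $F$ is bounded below along the sequence, so $F(z^k)\downarrow F^*$ for some finite $F^*$. Summing \eqref{eq: suf_decrease} gives $\sum_k a\,\delta_k^2\le F(z^{k_0})-F^*<\infty$, hence $\delta_k\to0$. If $F(z^k)=F^*$ for some $k$, monotonicity forces $F(z^j)=F^*$ and then $\delta_j=0$ for all $j\ge k$, so $\{x^k\}$ is eventually constant and the claim is trivial; thus I may assume $F(z^k)>F^*$ for all $k$.

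Next I would control the accumulation set. Boundedness of $\{x^k\}$ is assumed; $\{d^k\}$ is bounded since $\delta_k\to0$, while $\{w_i^k\}$ and $\{\hat v_i^k\}$ are bounded because $\nabla g_i$ is continuous and the subgradients $v_i^k\in\partial h_i(x^k)$ stay bounded on the compact set containing $\{x^k\}$ by local Lipschitz continuity of $h_i$. Hence $\{z^k\}$ is bounded and its accumulation set $\Omega$ is nonempty and compact, each element having the form $(\bar x,0,\bar w_0,\bar w_1,\hat v_0,\hat v_1)$. Using the identity $-\hat h_i(x^k)=\hat h_i^*(\hat v_i^k)-\langle\hat v_i^k,x^k\rangle$ already exploited in \eqref{eq:E_k}, the value $F(z^k)$ reduces to $\tilde\phi_k(x^k+d^k)+\tfrac{\bar\alpha}{2}\|d^k\|^2$, a continuous expression in $(x^k,d^k,\nabla g_i(x^k),v_i^k)$; passing to a convergent subsequence and using $\delta_k\to0$ then yields $F\equiv F^*$ on $\Omega$. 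With $\Omega$ compact and $F$ constant on it, the PLK assumption upgrades, via the uniformization lemma of \cite{bento2025convergence}, to a single concave desingularizing function $\psi$ and numbers $\varepsilon,\eta>0$ such that $\psi'(F(z)-F^*)\,\mathrm{dist}(0,\partial F(z))\ge1$ for every $z$ with $\mathrm{dist}(z,\Omega)<\varepsilon$ and $F^*<F(z)<F^*+\eta$.

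Finally I would close the finite-length estimate. Since $\mathrm{dist}(z^k,\Omega)\to0$ and $F(z^k)\downarrow F^*$, for all large $k$ the point $z^k$ lies in the uniform KL region, so by the relative-error bound \eqref{eq: rel_err},
\[
\psi'\!\big(F(z^k)-F^*\big)\ \ge\ \frac{1}{\mathrm{dist}(0,\partial F(z^k))}\ \ge\ \frac{1}{b\,\delta_k}.
\]
Concavity of $\psi$ together with the descent \eqref{eq: suf_decrease} then gives
\[
\psi\!\big(F(z^k)-F^*\big)-\psi\!\big(F(z^{k+1})-F^*\big)\ \ge\ \psi'\!\big(F(z^k)-F^*\big)\big(F(z^k)-F(z^{k+1})\big)\ \ge\ \frac{a}{b}\,\delta_k,
\]
so that $\delta_k\le\frac{b}{a}\big[\psi(F(z^k)-F^*)-\psi(F(z^{k+1})-F^*)\big]$. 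Summation telescopes and is bounded by $\frac{b}{a}\psi(F(z^{k_0})-F^*)<\infty$, whence $\sum_k\|x^{k+1}-x^k\|<\infty$. Thus $\{x^k\}$ is Cauchy and converges to some $\bar x\in X$; being the unique accumulation point, $\bar x$ is then a stationary point of \eqref{difference-programming} by Theorem~\ref{convergence2}.

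The step I expect to be the main obstacle is the passage from the pointwise PLK property of $F$ to the \emph{uniform} inequality over a neighborhood of $\Omega$, while simultaneously verifying $F\equiv F^*$ on $\Omega$. The delicate issue is the continuity of $F$ along the sequence: the merit function \eqref{eq: E definition} carries the conjugates $\hat h_i^*$, the nonsmooth $\max$, and the indicator $\delta_X$, none of which is continuous as a function on the whole product space. One must instead use that along the generated sequence the conjugate terms collapse to $-\hat h_i(x^k)$ via $x^k\in\partial\hat h_i^*(\hat v_i^k)$ and that $x^k+d^k\in X$, so that $F(z^k)$ coincides with the manifestly continuous quantity $\tilde\phi_k(x^k+d^k)+\tfrac{\bar\alpha}{2}\|d^k\|^2$; ensuring that these limiting values agree with $F$ evaluated at the accumulation points is precisely where care is required before the uniformization lemma can be invoked.
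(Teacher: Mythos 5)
Your proposal is correct and follows essentially the same route as the paper: eventual constancy of $p_k,\alpha_k$ and $\tau_k=1$ via Proposition~\ref{prop:tau-lower-bound proximal}, the sufficient-decrease and relative-error conditions from Lemma~\ref{lemma: error condition of E}, constancy of $E_{\bar p,\bar\alpha}$ on the compact accumulation set, uniformization of the PLK inequality, and the standard telescoping to get $\sum_k\|x^{k+1}-x^k\|<\infty$ before invoking Theorem~\ref{convergence2}. The only difference is that you spell out the concavity/telescoping step and the continuity of the merit value along the sequence (via $x^k\in\partial\hat h_i^*(\hat v_i^k)$), which the paper delegates to ``standard arguments as in \cite{Attouch2013}.''
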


\begin{proof}\,
Since $\{x^k\}$ is bounded, Proposition~\ref{prop:tau-lower-bound} ensures that $\inf_{k}\tau_k >0$, and Proposition \ref{prop:bounded-subsequence} ensures that $\|d^k \| \rightarrow 0$. Define
$$
w_i^k := \nabla g_i(x^k) - v_i^k\;\mbox{ and }\;\hat v_i^k=v_i^k + \rho_h x^k,\quad {i \in \{0\} \cup \mathcal I,}
$$
and denote by $\Omega$ the collection of all limiting points of the sequence {$\{(x^k, \tau_k, d^k, G_k, \mathbf{w}^k, \mathbf{\hat{v}}^k)\}$}. Using the boundedness of $\{x^k\}$, the conditions $w_i^k = \nabla g_i(x^k) - v_i^k$ and $\hat v_i^k=v_i^k + \rho_h x^k$ with $v_i^k \in \partial h_i(x^k)$ for {$i \in \{0\}\cup\mathcal I$}, the continuous differentiability of {$g_i$} on $X$, and the local Lipschitz continuity of {$h_i$} on $X$, it follows that the {sequences $\{\mathbf{w}^{k}\}$ and $\{\mathbf{\hat{v}}^{k}\}$} are also bounded. {Combining this with $d^k \to 0$, $\tau_k \in (0,1]$, and the boundedness of $\{G_k\}$ from Assumption \ref{ass_G}, we conclude that $\{(x^k, \tau_k, d^k, G_k, \mathbf{w}^k, \mathbf{\hat{v}}^k)\}$} is a bounded sequence. Thus $\Omega$ is a bounded set, which is clearly closed, and so we have $\lim_{k \rightarrow \infty} \mathrm{dist}({(x^k, \tau_k, d^k, G_k, \mathbf{w}^k, \mathbf{\hat{v}}^k)}, \Omega) = 0$.

The boundedness of {$\{(x^k, \tau_k, d^k, G_k, \mathbf{w}^k, \mathbf{\hat{v}}^k)\}$} ensures that the sequence {$\{E_{\bar p}(x^k, \tau_k, d^k, G_k, \mathbf{w}^k, \allowbreak \mathbf{\hat{v}}^k)\}$} is also bounded. It follows from the sufficient decrease condition \eqref{eq: suf_decrease} that the latter sequence is nonincreasing. Therefore, there exists a constant $\bar{E}$ such that any subsequence of {$\{E_{\bar{p}}(x^k, \tau_k, d^k, G_k, \mathbf{w}^k, \mathbf{\hat{v}}^k)\}$} converges to $\bar{E}$. This tells us that $E_{\bar{p}}$ is constant on $\Omega$.
Without loss of generality, assume that 
$$
{E_{\bar{p}}(x^k, \tau_k, d^k, G_k, \mathbf{w}^k, \mathbf{\hat{v}}^k)} > \bar{E}\;\mbox{ for all }\; k\in\mathbb N.
$$
If there exists some $k\in\mathbb N$ such that ${E_{\bar{p}}(x^k, \tau_k, d^k, G_k, \mathbf{w}^k, \mathbf{\hat{v}}^k)} = \bar{E}$, then \eqref{eq: suf_decrease} implies that {$\|x^{k+1}-x^k\| = 0$} for all sufficiently large $k$. In this case, the sequence $\{x^k\}$ of CDP-ESQM iterates converges {in finite steps}, and the conclusion of the theorem clearly holds.

It follows from $\lim_{k \rightarrow \infty} \mathrm{dist}({(x^k, \tau_k, d^k, G_k, \mathbf{w}^k, \mathbf{\hat{v}}^k)}, \Omega) = 0$ and {$\lim_{k \rightarrow \infty} E_{\bar{p}}(x^k, \tau_k, d^k, G_k, \mathbf{w}^k, \allowbreak \mathbf{\hat{v}}^k) = \bar{E}$} that for any $\varepsilon, \eta > 0$, there exists $k_0\in\mathbb{N}$ such that 
\begin{equation*}
    \mathrm{dist}({(x^k, \tau_k, d^k, G_k, \mathbf{w}^k, \mathbf{\hat{v}}^k)}, \Omega) < \varepsilon  \text{ and }  \bar{E} < {E_{\bar{p}}(x^k, \tau_k, d^k, G_k, \mathbf{w}^k, \mathbf{\hat{v}}^k)} < \bar{E} + \eta \quad \mbox{whenever } k \ge k_0.
\end{equation*}
Since $E_{\bar{p}}$ is a PLK function, $E_{\bar{p}}$ is constant on $\Omega$, and the set $\Omega$ is compact, it is easy to get the uniform version of the PLK property of $E_{\bar{p}}$ on $\Omega$ saying that there exists a continuous concave function $\psi$ such that for all $k \ge k_0$ we get the inequality
\[
\psi' \left( {E_{ \bar{p}}(x^k, \tau_k, d^k, G_k, \mathbf{w}^k, \mathbf{\hat{v}}^k)} - \bar{E} \right) \mathrm{dist}\big( 0, {\partial} {E_{\bar{p}}(x^k, \tau_k, d^k, G_k, \mathbf{w}^k, \mathbf{\hat{v}}^k)} \big)\ge 1.
\]
Combining the latter with \eqref{eq: suf_decrease} and \eqref{eq: rel_err}, and applying the standard arguments as in \cite{Attouch2013} (which we omit for brevity) brings us to the series convergence
$$
\sum_{k=1}^\infty\| x^{k+1} - x^k \| < \infty,
$$
and thus the sequence $\{x^k\}$ converges. Then the conclusion of the theorem follows from Theorem~\ref{general problem convergence}.
\end{proof}\vspace*{0.05in}

It has been well recognized that the PLK property holds for broad classes of functions that appear in optimization and variational analysis.  A remarkable class of such functions consists of semialgebraic ones.

\begin{definition} A {\em set} $D \subset \mathbb{R}^n$ is called  {\em semialgebraic} if there exists a finite number
of real polynomial functions $g_{ij}, h_{ij}: \mathbb{R}^n \rightarrow \mathbb{R}$ such that
\[
D = \bigcup_{j = 1}^p\bigcap_{i = 1}^q\{x \in \mathbb{R}^n ~|~ g_{ij}(x) = 0, ~ h_{ij}(x) < 0 \}.
\]
A {\em function} $f : \mathbb{R}^n \rightarrow (-\infty, \infty]$ is {\em semialgebraic} if its graph
is a semialgebraic set.
\end{definition}

Recall that the class of semialgebraic functions is invariant with respect to taking sums and compositions, that the indicator function of semialgebraic sets is semialgebraic, and that the partial minimization of
semialgebraic functions over semialgebraic sets is semialgebraic; see, e.g., \cite{Attouch2013}. This allows us to conclude that the merit function \( E_{\bar{p}} \), built in \eqref{eq: E definition} upon the semialgebraic data of the difference programming problem \eqref{difference-programming}, is semialgebraic. Therefore, we get the following efficient consequence of Theorem~\ref{thm: KL_convergence}.

\begin{corollary}\label{cor:semi} Assume in problem \eqref{difference-programming}, the functions $\varphi_0$ and $\varphi_i$ for $i\in\mathcal{I}$, and the set $X$ are semialgebraic, and let $\{x^k\}$ and $\{p_k\}$  be the sequences generated by CDP-ESQM. 
If both sequences $\{x^k\}$ and $\{p_k\}$ are bounded, then $\{x^k\}$  converges to a stationary point of \eqref{difference-programming}.
\end{corollary}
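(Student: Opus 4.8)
The plan is to realize this corollary as a direct specialization of Theorem~\ref{thm: KL_convergence}. That theorem already delivers the desired conclusion---convergence of $\{x^k\}$ to a stationary point of \eqref{difference-programming}---provided that two hypotheses are met: first, that $\{x^k\}$ and $\{p_k\}$ are bounded (which is assumed here), and second, that the merit function $E_{\bar p,\bar\alpha}$ from \eqref{eq: E definition} is a PLK function. Since the boundedness hypotheses are granted in the corollary's statement, the entire burden reduces to verifying the single implication: if $\varphi_0,\varphi_1$ and $X$ are semialgebraic, then $E_{\bar p,\bar\alpha}$ is a PLK function.

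To establish that implication, I would argue that semialgebraicity propagates through every operation used to build $E_{\bar p,\bar\alpha}$, and then invoke the well-known fact that every semialgebraic function automatically satisfies the PLK (Kurdyka--\L ojasiewicz) property; this is exactly the content recalled from \cite{Attouch2013} in the paragraph immediately preceding the corollary. Concretely, I would trace through the construction in \eqref{eq: E definition}: the functions $\hat g_i(x)=g_i(x)+\tfrac{\rho_h}{2}\|x\|^2$ are semialgebraic because $g_i$ is semialgebraic and $\tfrac{\rho_h}{2}\|x\|^2$ is a polynomial; the inner products $\langle w_i,d\rangle$ and $\langle\hat v_i,x\rangle$ are polynomials in the respective variables; the conjugates $\hat h_i^*$ are semialgebraic because Fenchel conjugation is a partial maximization of a semialgebraic function over the semialgebraic set $D$, and partial optimization preserves semialgebraicity; the indicator $\delta_X$ is semialgebraic since $X$ is a semialgebraic set; and the pointwise maximum against $0$ together with the finite sums again preserve semialgebraicity. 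Collecting these observations, the whole expression $E_{\bar p,\bar\alpha}$ is a finite combination---sums, inner products, a $\max$ with $0$, a conjugate, and an indicator---of semialgebraic building blocks, hence semialgebraic, and therefore a PLK function at every point of its domain.

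With $E_{\bar p,\bar\alpha}$ confirmed to be PLK, I would simply apply Theorem~\ref{thm: KL_convergence}: its hypotheses (boundedness of $\{x^k\}$ and $\{p_k\}$, plus the PLK property of $E_{\bar p,\bar\alpha}$) are now all in force, and its conclusion that $\{x^k\}$ converges to a stationary point of \eqref{difference-programming} is precisely the assertion of the corollary. I do not anticipate any genuine obstacle in this argument, since the heavy analytic machinery has already been discharged inside Theorem~\ref{thm: KL_convergence}; the only point requiring care is the semialgebraicity of the Fenchel conjugates $\hat h_i^*$, where one must explicitly note that $\hat h_i$ is semialgebraic---which follows because $h_i$ is built from the semialgebraic data $\varphi_i=g_i-h_i$ together with the semialgebraic $\varphi_i$---and that taking the conjugate amounts to a partial maximization over the semialgebraic compact set $D$, an operation under which the class of semialgebraic functions is closed. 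Once this verification is in place, the corollary follows at once.
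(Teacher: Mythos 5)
Your proposal follows exactly the route the paper takes: the paper's entire justification of Corollary~\ref{cor:semi} is the paragraph preceding it (closure of semialgebraic functions under sums, compositions, indicators of semialgebraic sets, and partial optimization, so that $E_{\bar p,\bar\alpha}$ in \eqref{eq: E definition} is semialgebraic and hence PLK) followed by an appeal to Theorem~\ref{thm: KL_convergence}, and you reproduce this with more detail on each building block. The one weak spot is your closing remark that $h_i$ is semialgebraic ``because $h_i$ is built from the semialgebraic data $\varphi_i=g_i-h_i$'': this is circular, and semialgebraicity of the difference $\varphi_i$ alone does not yield semialgebraicity of $g_i$ and $h_i$ separately, which is what the construction of $\hat h_i^*$ actually requires; the paper's own statement shares this imprecision, and the intended reading is that the individual data $g_i,h_i$ (hence $\hat g_i,\hat h_i$) are semialgebraic. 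With that understanding, your argument is correct and coincides with the paper's.
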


\section{VI-Constrained Difference Programming}\label{sec:VI-constrained}
\setcounter{equation}{0}

In this section, we consider the {\em VI-constrained difference programming problem} defined by
\begin{equation}\label{VI-constrained}
\begin{aligned}
\min_{ y \in Y, z \in \mathbb{R}^{m_2}}  
~~& \Psi(y, z) := \Psi_1(y,z) - \Psi_2(y,z) \\
\text{subject to } ~~~~& z \in S(y) := \left\{
z \in\Gamma(y) \mid \langle F(y, z), \theta - z \rangle \geq 0 \text{ for all } \theta \in \Gamma(y)
\right\},
\end{aligned}
\end{equation}  
where $Y \subset \mathbb{R}^{m_1}$ is nonempty, convex, and closed sets. Throughout the section, the function \( \Psi_1: \mathbb{R}^{m_1} \times \mathbb{R}^{m_2} \to \mathbb{R} \) and mapping $ F: \mathbb{R}^{m_1} \times \mathbb{R}^{m_2} \to \mathbb{R}^{m_2} $ are assumed to be continuously differentiable with locally Lipschitzian  gradients and Jacobians, respectively, on \(  \left( Y \times \mathbb{R}^{m_2} \right) \cap \mathrm{gph} (\Gamma) \), while the function \( \Psi_2: \mathbb{R}^{m_1}\times \mathbb{R}^{m_2} \to \mathbb{R} \) is a locally Lipschitz continuous and prox-regular function over \(  \left( Y \times \mathbb{R}^{m_2} \right) \cap \mathrm{gph} (\Gamma) \). The set-valued mapping $\Gamma(y) : \mathbb{R}^{m_1} \rightrightarrows \mathbb{R}^{m_2} $ is defined by
\begin{equation}\label{Gamma}
\Gamma(y) := \{z \in \mathbb{R}^{m_2} \mid \zeta_i(y,z) \le 0, i =1, \ldots,s\},
\end{equation}
where each $\zeta_i(y,z): \mathbb{R}^{m_1} \times \mathbb{R}^{m_2} \to \mathbb{R}$ is a continuously differentiable and convex function on  \( Y \times \mathbb{R}^{m_2} \).\vspace*{0.05in}

To ensure the well-posedness of problem \eqref{VI-constrained}, we impose the additional standing assumptions.

\begin{assumption}\label{assumption1} 
{\rm The following conditions hold:
\begin{enumerate}
\item[\bf(a)] The objective function $\Psi$ is bounded from below on $ \left( Y \times \mathbb{R}^{m_2} \right) \cap \mathrm{gph} (\Gamma)$.

\item[\bf(b)] There exists an open set $U\subset\mathbb{R}^{m_1}$ such that $Y \subset U$ and $\Gamma(y) \neq \varnothing$ for all $ y \in U$.
\end{enumerate}}
\end{assumption}

A classical approach to solving VI involves reformulating VI as a minimization problem using a gap function. Utilizing the {\em regularized gap function} introduced in \cite{fukushima1992equivalent}, we define
\begin{equation}\label{def_mu}
\mu_\gamma(y,z)	:= \inf\limits_{\theta\in \Gamma(y)}\left\{
\langle F(y,z),\theta\rangle+\frac{1}{2\gamma}\|\theta-z\|^2
\right\},
\end{equation}
where $\gamma >0$. Then $\langle F(y,z),z\rangle - \mu_\gamma(y,z)$ corresponds precisely to the regularized gap function for VI in \eqref{VI-constrained} given \cite{fukushima1992equivalent}. As proved in 
\cite[Theorem~3.1]{fukushima1992equivalent}, the following properties hold:
$$\langle F(y,z),z\rangle - \mu_\gamma(y,z) \ge 0\;\mbox{ for all }\;z \in \Gamma(y),$$ 
$$\langle F(y,z),z\rangle - \mu_\gamma(y,z) \le 0, ~z \in \Gamma(y) \quad \text{if and only if} \quad z \in S(y).$$ Leveraging this equivalence between the solution set of VI and the level set of the regularized gap function yields the single-level reformulation approach introduced in \cite{marcotte1996exact}. Consequently, the VI-constrained difference programming problem can be equivalently reformulated as follows
\begin{equation}\label{VI-constraint_equivalent}
\begin{aligned}
\min_{ y \in Y, z\in \Gamma(y)}  ~~& 
\Psi(y,z)\\
\text{subject to } ~~~~& \langle F(y,z),z\rangle - \mu_\gamma(y,z) \le 0.
\end{aligned}
\end{equation}

Our subsequent analysis demonstrates that reformulation \eqref{VI-constraint_equivalent} constitutes a special case of the constrained difference programming problem \eqref{difference-programming}. Consequently, the proposed 
Algorithm~\ref{alg:ESQM} (CDP-ESQM) can be directly applied to solving \eqref{VI-constraint_equivalent}. To proceed in this way, let us first investigate the properties of the function $\mu_\gamma(y,z)$ defined in \eqref{def_mu}. Specifically, we show that $\mu_\gamma(y,z)$ is prox-regular and then provide an explicit expression for its subdifferential.

Observe that the optimization problem defining $\mu_\gamma(y,z)$ in \eqref{def_mu} is strongly convex.  Its unique solution admits the closed-form expression
\begin{equation}\label{def_theta}
\theta_\gamma^*(y, z) :=  \mathop{\arg\min}\limits_{\theta \in \Gamma(y)} \left\{
\langle F(y,z), \theta \rangle + \frac{1}{2\gamma} \|\theta - z\|^2
\right\} = \mathrm{Proj}_{\Gamma(y)}\left(z - \gamma F(y,z)\right).
\end{equation}
Thus, when  the set $\Gamma(y)$ is simple and the projection onto it is computationally tractable, $\theta_\gamma^*(y, z)$ can be efficiently computed. Consequently, $\mu_\gamma(y,z)$ is also readily computable since it has the expression $$\mu_\gamma(y,z) = \langle F(y,z), \theta_\gamma^*(y, z)  \rangle + \frac{1}{2\gamma} \|\theta_\gamma^*(y, z)  - z\|^2.$$ Furthermore, the following lemma verifies the local boundedness of $\theta_\gamma^*(y, z)$ on $Y \times \mathbb{R}^{m_2}$.

\begin{lemma}\label{lemma:local_boundedness}
The mapping \( \theta_\gamma^*(y, z)\) is locally bounded around every point in $Y \times \mathbb{R}^{m_2}$.
\end{lemma}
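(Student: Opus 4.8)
The plan is to exploit the closed-form expression $\theta_\gamma^*(y,z) = \mathrm{Proj}_{\Gamma(y)}\!\left(w(y,z)\right)$ from \eqref{def_theta}, where $w(y,z) := z - \gamma F(y,z)$, and to bound this projection by combining the nonexpansiveness of the metric projection with the existence of a uniformly bounded feasible selection of the moving set $\Gamma(\cdot)$. First I would record the elementary projection estimate: for any $\hat\theta \in \Gamma(y)$, the optimality of the projection gives $\|\theta_\gamma^*(y,z) - w(y,z)\| \le \|\hat\theta - w(y,z)\|$, whence
\[
\|\theta_\gamma^*(y,z)\| \le 2\|w(y,z)\| + \inf_{\hat\theta \in \Gamma(y)}\|\hat\theta\| = 2\|z - \gamma F(y,z)\| + \mathrm{dist}(0, \Gamma(y)).
\]
Fixing a point $(\bar y, \bar z) \in Y \times \mathbb{R}^{m_2}$, the term $\|z - \gamma F(y,z)\|$ is bounded on a neighborhood of $(\bar y, \bar z)$ by the continuity of $F$, so the whole question reduces to showing that $y \mapsto \mathrm{dist}(0, \Gamma(y))$ stays bounded for $y$ near $\bar y$.

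To control $\mathrm{dist}(0, \Gamma(y))$ I would use the convexity of the graph $G := \mathrm{gph}\,\Gamma = \{(y,z) \mid \zeta_i(y,z) \le 0,\ i = 1,\dots,s\}$, which is convex because each $\zeta_i$ is jointly convex, together with Assumption~\ref{assumption1}(b). The latter guarantees that $\Gamma(y) \neq \varnothing$ for all $y$ in the open set $U \supseteq Y$, i.e., $\bar y$ lies in the interior of the projection $\mathrm{proj}_{\mathbb{R}^{m_1}} G$. Choosing a radius $\rho > 0$ with $\bar y \pm \rho e_j \in U$ for every coordinate direction $e_j$, $j = 1, \dots, m_1$, and selecting feasible points $(\bar y \pm \rho e_j, \theta^{\pm}_j) \in G$ together with some $(\bar y, \hat z) \in G$, I would form the polytope $P := \mathrm{co}\{(\bar y \pm \rho e_j, \theta^{\pm}_j),\ (\bar y, \hat z)\} \subseteq G$. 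Its projection onto the $y$-space is the cross-polytope spanned by $\bar y \pm \rho e_j$, which contains a ball $\mathbb{B}_\delta(\bar y)$; consequently, for each $y \in \mathbb{B}_\delta(\bar y)$ there is a point $(y, z) \in P \subseteq G$ whose $z$-component is a convex combination of the finitely many vectors $\theta^{\pm}_j$ and $\hat z$. Setting $M := \max\{\|\theta^\pm_j\|, \|\hat z\|\}$ then yields $\mathrm{dist}(0, \Gamma(y)) \le \|z\| \le M$ for all $y \in \mathbb{B}_\delta(\bar y)$.

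Combining the two steps gives $\|\theta_\gamma^*(y,z)\| \le 2C + M$ on a neighborhood of $(\bar y, \bar z)$, where $C$ bounds $\|z - \gamma F(y,z)\|$ there, which is exactly the asserted local boundedness. The routine parts are the projection estimate and the continuity bound on $w$; the main obstacle is the middle step, namely controlling the minimal-norm feasible selection $\mathrm{dist}(0, \Gamma(y))$ of the perturbed feasible set as $y$ varies. The essential point is that the convexity of $G$ prevents the slices $\Gamma(y)$ from ``escaping to infinity'' near an interior point of $\mathrm{dom}\,\Gamma$ --- a phenomenon that is perfectly possible for nonconvex constraint systems --- so it is precisely here that the joint convexity of the $\zeta_i$ and the openness in Assumption~\ref{assumption1}(b) are indispensable. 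A minor technical caveat to address is that the convexity of the $\zeta_i$ is posited on $Y \times \mathbb{R}^{m_2}$; when $\bar y$ lies on the boundary of $Y$, one restricts the polytope construction to directions remaining in this region of convexity (equivalently, works relative to $Y$), which does not affect the conclusion.
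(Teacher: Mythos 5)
Your proof is correct and arrives at the conclusion by a genuinely different route than the paper. The paper argues by contradiction: if $\|\theta_\gamma^*(y_k,z_k)\|\to\infty$ along $(y_k,z_k)\to(\bar y,\bar z)$, then the optimal value $\langle F(y_k,z_k),\theta_\gamma^*(y_k,z_k)\rangle+\frac{1}{2\gamma}\|\theta_\gamma^*(y_k,z_k)-z_k\|^2$ must blow up (the quadratic term dominates the linear one), whereas it is bounded above by evaluating the objective at a feasible point $\theta_k\in\Gamma(y_k)$ of controlled norm; that feasible point is produced by noting that the marginal function $\nu(y)=\inf\{\|\theta\|\mid\theta\in\Gamma(y)\}$ is convex and finite on the open set $U\supseteq Y$, hence locally Lipschitz near $\bar y$. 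You instead exploit the projection characterization directly to get the a priori estimate $\|\theta_\gamma^*(y,z)\|\le 2\|z-\gamma F(y,z)\|+\mathrm{dist}(0,\Gamma(y))$, and then bound $\mathrm{dist}(0,\Gamma(y))$ by inscribing an explicit polytope in the convex graph of $\Gamma$ whose $y$-projection covers a ball around $\bar y$. Both arguments hinge on the same mechanism --- joint convexity of the $\zeta_i$ together with nonemptiness of $\Gamma(\cdot)$ on an open set forces a locally bounded feasible selection --- but yours is more self-contained (it replaces the appeal to marginal-function convexity and the Lipschitz-continuity theorem for convex functions by an elementary convex-hull construction) and is direct rather than by contradiction. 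The caveat you raise about convexity of the $\zeta_i$ being posited only on $Y\times\mathbb{R}^{m_2}$ is real but applies equally to the paper's own proof, which needs $\nu$ to be convex and Lipschitz on a full neighborhood of $\bar y$ even when $\bar y$ lies on the boundary of $Y$; in both arguments one implicitly uses convexity on $U\times\mathbb{R}^{m_2}$, and your explicit acknowledgment of this is a point in your favor rather than a gap.
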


\begin{proof}\,
Let $(\bar y, \bar z) \in Y \times \mathbb{R}^{m_2}$. Assume on the contrary that there exists a sequence $\{(y_k,z_k)\}$ such that $(y_k, z_k) \to (\bar y, \bar z)$ as $k \to \infty$ and $\|\theta_\gamma^*(y_k,z_k)\| > k$ for every $k \in \mathbb{N}$. It follows from the continuity of $F$ that
\begin{equation}\label{limit goes to infinity}
\begin{aligned}
\left\langle F(y_k,z_k), \theta^*_\gamma(y_k,z_k) \right\rangle
+ \frac{1}{2\gamma} \left\|\theta_\gamma^*(y_k,z_k) - z_k\right\|^2
\to \infty\;\text{ as }\;k \to \infty.
\end{aligned}
\end{equation}
Consider the value function $\nu(y) := \inf_\theta \{ \|\theta\| \mid \theta \in \Gamma(y)\}$ and deduce from the convexity of $\zeta_i(y,z)$ for $i = 1,\ldots,s$ on  \( Y \times \mathbb{R}^{m_2} \) that $\nu(y)$ is convex on $Y$; see \cite[Theorem 2.129]{mordukhovich2022convex}. The imposed assumption tells us that  $\Gamma(y) \neq \varnothing$ on an open set $U$ with $Y \subset U$, which yields $\nu(y) < \infty$ on $U$. Consequently, $\mathrm{dom}(\nu) \subset U$, and it follows from \cite[Corollary 2.152]{mordukhovich2022convex} that
$$
| \nu(y_k) -\nu(\bar{y})| \le L_\theta\|y_k - \bar{y}\|
$$ 
for all $k$ any sufficiently large. Therefore, for such $k$ we can find
$\theta_k \in \Gamma(y_k)$ satisfying 
$$
\|\theta_k\| \le \| \theta^*_\gamma(\bar{y}, \bar{z})\| + L_\theta\|y_k - \bar{y}\|.
$$
This leads us to the estimates
\begin{equation*}
\begin{aligned}
&\left\langle F(y_k,z_k), \theta^*_\gamma(y_k,z_k) \right\rangle
+ \frac{1}{2\gamma} \left\|\theta_\gamma^*(y_k,z_k) - z_k\right\|^2 \\
&\leq\left\langle  F(y_k,z_k), \theta_k \right\rangle
+ \frac{1}{2\gamma} \left\|\theta_k - z_k\right\|^2 \\
&\leq\left\| F(y_k,z_k) \right\| \left(\| \theta^*_\gamma(\bar{y}, \bar{z})\| + L_\theta\|y_k - \bar{y}\| \right) + \frac{1}{\gamma} \left( 2\| \theta^*_\gamma(\bar{y}, \bar{z})\|^2 + 2L_\theta^2\|y_k - \bar{y}\|^2 + \left\|z_k\right\|^2 \right).
\end{aligned}
\end{equation*}
However, the right-hand side above remains bounded as  $k \to \infty$, which contradicts \eqref{limit goes to infinity}. 
\end{proof}\vspace*{0.07in}

Next we verify the prox-regularity of $\mu_\gamma(y,z)$ and provide an explicit formula for its subdifferential.
Observe that the normal cone representation \eqref{thm_proxreg_eq5} used below holds, in particular, when the functions $\zeta_i$ in \eqref{Gamma} satisfy the classical Mangasarian-Fromowitz constraint qualification; see, e.g., \cite[Theorem~6.14]{rockafellar1998variational}.

\begin{proposition}\label{prop:prox-regularity} 
For any $(\bar{y}, \bar{z}) \in Y \times \mathbb{R}^{m_2}$, there exist constants $\varepsilon, \rho_{\mu} > 0$ such that the function $$\mu_\gamma(y, z) + \frac{\rho_{\mu}}{2}\|(y, z)\|^2$$ is convex on $\mathbb{B}_{\varepsilon}(\bar{y}, \bar{z})$. Consequently, $\mu_\gamma(y, z)$ is prox-regular at \( (\bar{x}, \bar{y}) \). Furthermore, if the  condition
\begin{equation}\label{thm_proxreg_eq5}
\mathcal{N}_{\mathrm{gph}(\Gamma)}(y,z) = \left\{ \left.\sum_{i = 1}^{s} \nu_i \nabla \zeta_i({y}, z)  ~\right|~ \nu \in \mathbb{R}^s,\;\nu \ge 0,\;\sum_{i = 1}^{s}\nu_i \zeta_i({y},z ) = 0   \right\}
\end{equation}  
holds at $(\bar{y}, \theta_\gamma^*(\bar y, \bar{z}))$, then the subdifferential $\partial\mu_\gamma(\bar{y}, \bar{z})$ of $ \mu_\gamma$ at $(\bar{y}, \bar{z})$ is computed by
\[
\left\{ \left. \begin{bmatrix}
\nabla_y F(\bar{y}, \bar{z})^T \bar\theta^* + \displaystyle	\sum_{i = 1}^{s} \nu_i \nabla_y \zeta_i(\bar{y}, \bar\theta^*) \\
\nabla_z F(\bar{y}, \bar{z})^T \bar\theta^* + 
\displaystyle \frac{\bar{z} - \bar\theta^*}{\gamma}
\end{bmatrix}	~\right|~ 
\begin{array}{c}
\nu \in \mathbb{R}^s, \nu \ge 0, \displaystyle\sum_{i = 1}^{s}\nu_i \zeta_i(\bar{y}, \bar\theta^* ) = 0, \\
F(\bar y,\bar z)+\displaystyle\frac{\bar\theta^*-\bar z}{\gamma}+\displaystyle\sum_{i=1}^s\nu_i\nabla_z\zeta_i(\bar y,\bar\theta^*)=0
\end{array}
\right\},
\]
where $\bar\theta^*=\theta_\gamma^*(\bar x,\bar y)$, $\nabla_y F(y,z)$ and $\nabla_z F(y,z)$ denotes the partial Jacobian of $F$ with respect to $y$ and $z$, respectively, and where $\nabla_y \zeta_i(y,z)$ stands for the partial gradient of $\zeta_i$ with respect to the variable $y$.
\end{proposition}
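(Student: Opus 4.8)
The plan is to reduce the infimal definition of $\mu_\gamma$ to a partial minimization of a \emph{jointly convex} integrand, from which both the prox-regularity and the subdifferential formula will follow. Write $u:=(y,z)$ and
\[
G(y,z,\theta):=\langle F(y,z),\theta\rangle+\tfrac{1}{2\gamma}\|\theta-z\|^2,
\]
so that $\mu_\gamma(y,z)=\inf_{\theta\in\Gamma(y)}G(y,z,\theta)$. By Lemma~\ref{lemma:local_boundedness} the minimizer $\theta_\gamma^*(y,z)$ remains in a bounded set for $(y,z)$ near $(\bar y,\bar z)$, so I would fix a closed ball $B$ containing these minimizers and an $\varepsilon>0$ with $\mathbb{B}_\varepsilon(\bar y,\bar z)$ inside that neighborhood. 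Since the inner problem is strongly convex, restricting $\theta$ to $B$ does not alter the value, i.e., $\mu_\gamma(y,z)=\inf_{\theta\in\Gamma(y)\cap B}G(y,z,\theta)$ on $\mathbb{B}_\varepsilon(\bar y,\bar z)$; moreover the indicator $\delta_{\mathrm{gph}(\Gamma)}(y,\theta)$ is convex in $(y,\theta)$ because each $\zeta_i$ is jointly convex.

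The key step is to produce $\rho_\mu>0$ for which
\[
\Phi(u,\theta):=G(y,z,\theta)+\tfrac{\rho_\mu}{2}\|u\|^2+\delta_{\mathrm{gph}(\Gamma)}(y,\theta)
\]
is jointly convex in $(u,\theta)$ on $\mathbb{B}_\varepsilon(\bar y,\bar z)\times B$. Because the indicator is already convex, it suffices to treat $G+\tfrac{\rho_\mu}{2}\|u\|^2$. Since $F$ is of class $\mathcal{C}^{1,1}$ and $\theta$ is confined to the bounded set $B$, the map $(u,\theta)\mapsto\langle F(y,z),\theta\rangle$ is $\mathcal{C}^{1,1}$ on the compact set, so by Rademacher's theorem its Hessian exists almost everywhere and is bounded. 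In the resulting a.e.\ Hessian of $G$ the $\theta\theta$-block equals $\tfrac{1}{\gamma}I\succ0$ (coming entirely from the Tikhonov term), while all remaining blocks are bounded in norm by a constant depending on $1/\gamma$, the $\mathcal{C}^{1,1}$-modulus of $F$, and $\sup_{\theta\in B}\|\theta\|$. Taking the Schur complement with respect to the positive-definite $\theta\theta$-pivot, I would show that augmenting the $(u,u)$-block by $\rho_\mu I$ makes this complement, and hence the whole Hessian, positive semidefinite once $\rho_\mu$ is large enough; an a.e.\ positive-semidefinite Hessian on the convex set $\mathbb{B}_\varepsilon(\bar y,\bar z)\times B$ then forces convexity of $\Phi$. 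I expect \emph{this} to be the main obstacle: one must exploit the curvature $\tfrac{1}{\gamma}I$ to absorb the indefinite coupling $\nabla F(y,z)^{\top}\theta$ while adding a quadratic only in $u$ (and not in $\theta$), and one must extract convexity from a Hessian that is only defined almost everywhere for the $\mathcal{C}^{1,1}$ datum $F$.

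Granting this, partial minimization preserves convexity, so $p(u):=\inf_{\theta}\Phi(u,\theta)=\mu_\gamma(u)+\tfrac{\rho_\mu}{2}\|u\|^2$ is convex on $\mathbb{B}_\varepsilon(\bar y,\bar z)$; thus $\mu_\gamma$ is convex-plus-quadratic there, i.e.\ lower-$\mathcal{C}^2$, which yields prox-regularity at $(\bar y,\bar z)$ and, in particular, subdifferential regularity (so the limiting and convex subdifferentials agree). For the explicit formula I would use that $p$ is convex with unique inner minimizer $\bar\theta^*=\theta_\gamma^*(\bar y,\bar z)$ and invoke the convex marginal-function rule (see, e.g., \cite{mordukhovich2022convex}) to get $\partial p(\bar u)=\{v\mid(v,0)\in\partial\Phi(\bar u,\bar\theta^*)\}$. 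Splitting $\Phi$ into its smooth part and the indicator and inserting the normal-cone representation \eqref{thm_proxreg_eq5} at $(\bar y,\bar\theta^*)$, the vanishing of the $\theta$-component reproduces the stationarity equation $F(\bar y,\bar z)+\tfrac{\bar\theta^*-\bar z}{\gamma}+\sum_i\nu_i\nabla_z\zeta_i(\bar y,\bar\theta^*)=0$ together with $\nu\ge0$ and $\sum_i\nu_i\zeta_i(\bar y,\bar\theta^*)=0$, while the $y$- and $z$-components of $v$ provide the two entries of the claimed gradient plus the $\rho_\mu$-terms. Finally, subtracting the regularization through $\partial\mu_\gamma(\bar u)=\partial p(\bar u)-\rho_\mu\bar u$ cancels the $\rho_\mu\bar y$ and $\rho_\mu\bar z$ contributions, leaving exactly the stated expression for $\partial\mu_\gamma(\bar y,\bar z)$.
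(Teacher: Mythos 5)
Your proposal is correct and follows the same overall architecture as the paper's proof: localize $\theta$ using Lemma~\ref{lemma:local_boundedness}, show that $G(y,z,\theta)+\tfrac{\rho_\mu}{2}\|(y,z)\|^2+\delta_{\mathrm{gph}(\Gamma)}(y,\theta)$ is jointly convex near the reference point, invoke preservation of convexity under partial minimization to get the convex-plus-quadratic representation of $\mu_\gamma$ (hence prox-regularity), and then derive the subdifferential formula from the convex marginal-function rule of \cite[Theorem~2.61]{mordukhovich2023easy} combined with the normal-cone representation \eqref{thm_proxreg_eq5} and the cancellation $\partial\mu_\gamma=\partial p-\rho_\mu(\bar y,\bar z)$. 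The only genuine divergence is in how the joint convexity is verified. You propose a second-order argument: Rademacher applied to the Lipschitz gradient of the ${\cal C}^{1,1}$ integrand, boundedness of the a.e.\ Hessian blocks on the compact set, and a Schur complement against the $\tfrac{1}{\gamma}I$ pivot in the $\theta\theta$-block to choose $\rho_\mu$. This works (the coupling block is bounded by $L_F+1/\gamma$ and the $uu$-block is bounded below by $-2L_{\nabla F}M_\theta$, so $\rho_\mu$ of order $2L_{\nabla F}M_\theta+\gamma(L_F+1/\gamma)^2$ suffices), though it requires the standard but slightly delicate step of passing from an a.e.\ positive-semidefinite Hessian to monotonicity of the gradient via a Fubini argument along segments. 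The paper instead stays entirely at first order: it splits off $\tfrac{1}{4\gamma}\|\theta\|^2$, verifies monotonicity of the gradient of $\langle F(y,z),\theta\rangle+(\gamma L_F^2+L_{\nabla F}M_\theta)\|(y,z)\|^2+\tfrac{1}{4\gamma}\|\theta\|^2$ directly by Cauchy--Schwarz and Young-type estimates (citing \cite[Theorem~12.17]{rockafellar1998variational}), and absorbs the remaining quadratic via the identity $\tfrac{1}{2\gamma}\|\theta-z\|^2+\tfrac{1}{2\gamma}\|(y,z)\|^2-\tfrac{1}{4\gamma}\|\theta\|^2=\tfrac{1}{2\gamma}\|y\|^2+\tfrac{1}{4\gamma}\|2z-\theta\|^2$. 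The paper's route avoids any appeal to almost-everywhere second derivatives of ${\cal C}^{1,1}$ data and produces the explicit modulus $\rho_\mu=\gamma L_F^2+L_{\nabla F}M_\theta+\tfrac{1}{2\gamma}$; your route is more mechanical and generalizes more readily, at the cost of the measure-theoretic step and a somewhat larger constant. Both are sound.
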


\begin{proof}\, Since both $F$ and $\nabla F$ are assumed to be locally Lipschitzian around $(\bar{y}, \bar{z})$, there exist constants $L_F, L_{\nabla F}, \varepsilon > 0$ such that $F$ and $\nabla F$ are $L_F$- and $L_{\nabla F}$-Lipschitz continuous on $\mathbb{B}_{\varepsilon}(\bar{y}, \bar{z})$, respectively. It follows from 
Lemma~\ref{lemma:local_boundedness} that \( \theta_\gamma^*(y, z)\) is locally bounded around $(\bar{y}, \bar{z})$, and so we can assume without loss of generality that there exists $M_\theta > 0$ for which $ \| \theta_\gamma^*(y, z) \| \le M_{\theta}$ whenever $(y,z) \in \mathbb{B}_{\varepsilon}(\bar{y}, \bar{z})$.
	
We aim further to demonstrate that the function
\begin{equation}
\mu_\gamma(y, z) + \left(  \gamma L_F^2 + L_{\nabla F}M_\theta + \frac{1}{2\gamma}\right)\|(y, z)\|^2
\end{equation}
is convex on $\mathbb{B}_{\varepsilon}(\bar{y}, \bar{z})$. Picking any $(y,z) \in \mathbb{B}_{\varepsilon}(\bar{y}$ and using $\bar{z})$, since $ \| \theta_\gamma^*(y, z) \| \le M_{\theta}$ give us
\begin{equation}\label{thm_proxreg_eq2}
\begin{aligned}
& \mu_\gamma(y, z) + \left( \gamma L_F^2 + L_{\nabla F}M_\theta + \frac{1}{2\gamma} \right)\|(y, z)\|^2 \\
=&\inf\limits_{\theta} \left\{  
\langle F(y,z), \theta \rangle + \frac{1}{2\gamma} \|\theta - z\|^2 + \left( \gamma L_F^2 + L_{\nabla F}M_\theta + \frac{1}{2\gamma}\right)\|(y,z)\|^2 + \delta_{\mathrm{gph}(\Gamma)}(y, \theta) 	\right\} \\
=&\inf\limits_{\theta} \left\{  \left.
\langle F(y,z), \theta \rangle + \frac{1}{2\gamma} \|\theta - z\|^2 + \left( \gamma L_F^2 + L_{\nabla F}M_\theta + \frac{1}{2\gamma}\right)\|(y,z)\|^2 + \delta_{\mathrm{gph}(\Gamma)}(y, \theta) 
~\right|~ \| \theta \| \le 2M_{\theta} 	\right\}. 
\end{aligned}
\end{equation}
To justify the claimed convexity, let us first show that the function
\begin{equation}\label{Fconvex}
\langle F(y,z), \theta \rangle + (\gamma L_F^2+ L_{\nabla F}M_\theta)
\|(y,z)\|^2
+ \frac{1}{4\gamma}\|\theta\|^2
\end{equation}
is convex with respect to $(y,z, \theta)$ on the set $\mathbb{B}_{\varepsilon}(\bar{y}, \bar{z}) \times \mathbb{B}_{ 2M_{\theta}}$. This is achieved by verifying the monotonicity of its gradient. Specifically, we intend to check that the function
\begin{equation}\label{thm_proxreg_eq1}
\left\langle \begin{bmatrix}
\nabla_y F(y,z)^T\theta - \nabla_y F(y',z')^T \theta' \\  
\nabla_z F(y,z)^T\theta - \nabla_z F(y',z')^T \theta' \\  
F(y,z) - F(y',z')
\end{bmatrix},
\begin{bmatrix}
y - y' \\ z-z' \\\theta - \theta'  
\end{bmatrix} \right\rangle + (2\gamma L_F^2 + 2L_{\nabla F}M_\theta)
\|(y,z) - (y',z')\|^2
+ \frac{1}{2\gamma}\|\theta - \theta'\|^2.
\end{equation}
is nonnegative for any $(y,z,\theta), (y',z',\theta') \in \mathbb{B}_{\varepsilon}(\bar{y}, \bar{z}) \times \mathbb{B}_{ 2M_{\theta}}$. Indeed, picking such triples brings us to 
\begin{equation*}
\begin{aligned}
&\left\langle 
\begin{bmatrix}
\nabla_y F(y,z)^T\theta - \nabla_y F(y',z')^T \theta' \\  
\nabla_z F(y,z)^T\theta - \nabla_z F(y',z')^T \theta' \\  
\end{bmatrix},
\begin{bmatrix}
y - y' \\ z-z' 
\end{bmatrix} \right\rangle \\
&=\left\langle
\begin{bmatrix}
\nabla_y F(y,z)^T\theta - \nabla_y F(y',z')^T \theta \\  
\nabla_z F(y,z)^T\theta - \nabla_z F(y',z')^T \theta \\  
\end{bmatrix},
\begin{bmatrix}
y - y' \\ z-z' 
\end{bmatrix} \right\rangle + \left\langle 
\begin{bmatrix}
\nabla_y F(y',z')^T\theta - \nabla_y F(y',z')^T\theta' \\
\nabla_z F(y',z')^T\theta - \nabla_z F(y',z')^T\theta'
\end{bmatrix},
\begin{bmatrix}
y - y' \\ z-z' 
\end{bmatrix} \right\rangle\\
&\ge -L_{\nabla F}\|\theta\| \cdot \|(y,z) - (y',z')\|^2 - L_F\|\theta - \theta'\| \cdot \|(y,z) - (y',z')\|\\
&\ge -2L_{\nabla F}M_\theta\|(y,z) - (y',z')\|^2 - \frac{1}{4\gamma}\|\theta - \theta'\|^2 - \gamma L_F^2
\|(y,z) - (y',z')\|^2
\end{aligned}
\end{equation*}
together with the estimates
\begin{equation*}
\left\langle  
F(y,z) - F(y',z'), \theta - \theta'
\right\rangle  
\geq 
-L_F\|\theta - \theta'\|\cdot \|(y,z) - (y',z')\| \ge  - \frac{1}{4\gamma}\|\theta - \theta'\|^2 - \gamma L_F^2
\|(y,z) - (y',z')\|^2.
\end{equation*}
Combining the above yields the desired inequality \eqref{thm_proxreg_eq1}. Consequently, it follows from \cite[Theorem~12.17]{rockafellar1998variational} that the function defined in \eqref{Fconvex} is convex with respect to $(y,z, \theta)$ on $\mathbb{B}_{\varepsilon}(\bar{y}, \bar{z}) \times \mathbb{B}_{ 2M_{\theta}}$.

By taking into account the equality
$$
\frac{1}{2\gamma} \|\theta - z\|^2 + \frac{1}{2\gamma} \|(y,z)\|^2 - \frac{1}{4\gamma} \|\theta\|^2
= \frac{1}{2\gamma}\|y\|^2 + \frac{1}{4\gamma}\left\|2z - \theta\right\|^2,
$$
we readily conclude that the function
\[
\frac{1}{2\gamma} \|\theta - z\|^2 + \frac{1}{2\gamma} \|(y,z)\|^2 - \frac{1}{4\gamma} \|\theta\|^2
\]
is convex with respect to $(y,z, \theta)$,  Combining  now these convexity results with the convexity of the graphical set $\mathrm{gph}(\Gamma)(y,z)$ tells us that the function
\[
\langle F(y,z), \theta \rangle + \frac{1}{2\gamma} \|\theta - z\|^2 + \left( \gamma L_F^2 + L_{\nabla F}M_\theta + \frac{1}{2\gamma}\right)\|(y,z)\|^2 + \delta_{\mathrm{gph}(\Gamma)}(y, \theta) 
\]
is convex with respect to $(y,z, \theta)$ on $\mathbb{B}_{\varepsilon}(\bar{y}, \bar{z}) \times \mathbb{B}_{ 2M_{\theta}}$. By using \cite[Proposition~1.50]{mordukhovich2023easy}, we can deduce from the representations in  \eqref{thm_proxreg_eq2} that the function
\begin{equation}\label{mu-convex}
\mu_\gamma(y, z) + \left(  \gamma L_F^2 + L_{\nabla F}M_\theta + \frac{1}{2\gamma}\right)\|(y, z)\|^2
\end{equation}
is convex on $\mathbb{B}_{\varepsilon}(\bar{y}, \bar{z})$. This therefore verifies the prox-regularity of $\mu_\gamma(y, z)$ at $(\bar{y}, \bar{z})$.

It follows from the convexity of the function in \eqref{mu-convex} on $\mathbb{B}_{\varepsilon}(\bar{y}, \bar{z})$ that
\begin{equation}\label{thm_proxreg_eq3}
\begin{array}{ll}
\disp\partial \mu_\gamma(\bar{y}, \bar{z})& = \partial \left(  \mu_\gamma(\bar{y}, \bar{z}) + \left(  \gamma L_F^2 + \disp\frac{1}{2}L_{\nabla F}M_\theta + \frac{1}{2\gamma}\right)\|(\bar{y}, \bar{z})\|^2 \right)\\
&-\disp\left(  \gamma L_F^2 + \frac{1}{2}L_{\nabla F}M_\theta + \frac{1}{2\gamma}\right) \nabla \|(\bar{y}, \bar{z})\|^2.
\end{array}
\end{equation}
Using \eqref{thm_proxreg_eq2} and the subdifferential formula for the partial minimization of convex functions established in \cite[Theorem~2.61]{mordukhovich2023easy} along with the subdifferential calculus rule 
\cite[Theorem~3.18]{mordukhovich2023easy}, we get
\begin{equation}\label{thm_proxreg_eq4}
\begin{aligned}
&\partial \left(  \mu_\gamma(\bar{y}, \bar{z})+ \left(  \gamma L_F^2 + \frac{1}{2}L_{\nabla F}M_\theta + \frac{1}{2\gamma}\right)\|(\bar{y}, \bar{z})\|^2 \right) \\
= &\left\{ (\xi_1, \xi_2) ~\left| ~
\begin{bmatrix}
\xi_1 \\ \xi_2 \\0 
\end{bmatrix} \!\in\! 
\begin{bmatrix}
\nabla_y F(\bar{y}, \bar{z})^T \theta \\ \nabla_z F(\bar{y}, \bar{z})^T \theta \\ F(\bar{y}, \bar{z})
\end{bmatrix} \!+\! 
\begin{bmatrix}
\left(2\gamma L_F^2 + 2L_{\nabla F}M_\theta + 1/\gamma \right) \bar{y} \\
\left(2\gamma L_F^2 + 2L_{\nabla F}M_\theta + 1/\gamma \right) \bar{z} + (\bar{z} - \theta)/\gamma \\
(\theta - \bar{z})/\gamma
\end{bmatrix} \!+\! \mathcal{N}_{\mathcal{F}}(\bar y, \bar{z}, \theta)
\right.\right\},
\end{aligned}
\end{equation}
where $\mathcal{F}:= \{(y,z,\theta) ~|~ (y,\theta) \in \mathrm{gph}(\Gamma)\}$. In particular, this tells us that
$$
\mathcal{N}_{\mathcal{F}}( y, z, \theta) = \{ (\xi_1, 0, \xi_3)~|~ (\xi_1,\xi_3) \in \mathcal{N}_{\mathrm{gph}(\Gamma)}(y,\theta)\}.
$$
It is easy to see that $\mathcal{N}_{\mathrm{gph}(\Gamma)}(y , \theta) \subset \mathbb{R}^{m_1} \times \mathcal{N}_{\Gamma(y)}(\theta)$, and consequently
\begin{equation*}
    \begin{aligned}
        &\{ \theta ~|~ 0 = F(\bar{y}, \bar{z}) + (\theta - \bar{z})/\gamma + \xi_3,  (\xi_1, \xi_2, \xi_3)\in \mathcal{N}_{\mathcal{F}}(\bar y, \bar{z}, \theta)\} \\
        =& \{ \theta ~|~ 0 = F(\bar{y}, \bar{z}) + (\theta - \bar{z})/\gamma + \mathcal{N}_{\Gamma(y)}(\theta)\} \\
        =& \{	\theta_\gamma^*(\bar y, \bar{z})\}.
    \end{aligned}
\end{equation*}
Therefore, combining the normal cone representation in \eqref{thm_proxreg_eq5}  at $(\bar{y}, \theta_\gamma^*(\bar y, \bar{z}))$ with \eqref{thm_proxreg_eq3} and \eqref{thm_proxreg_eq4}, we arrive at  the subdifferential formula 
\begin{equation*}
    \begin{aligned}
        \partial&\mu_\gamma(\bar y,\bar z)=\\
&\left\{ \left. \begin{bmatrix}
\nabla_y F(\bar{y}, \bar{z})^T \bar\theta^* \\
\nabla_z F(\bar{y}, \bar{z})^T \bar\theta^* 
\end{bmatrix}+  \begin{bmatrix}
\sum_{i = 1}^{s} \nu_i \nabla_y \zeta_i(\bar{y}, \bar\theta^*)  \\ (\bar{z} - \bar\theta^*)/\gamma 	
\end{bmatrix} 	~\right|~ 
\begin{array}{c}
\nu \in \mathbb{R}^s, \nu \ge 0, \sum_{i = 1}^{s}\nu_i \zeta_i(\bar{y}, \bar\theta^* ) = 0 \\
F(\bar y,\bar z)+(\bar\theta^*-\bar z)/{\gamma}+\sum_{i=1}^s\nu_i\nabla_z\zeta_i(\bar y,\bar\theta^*)=0
\end{array}   
\right\},
    \end{aligned}
\end{equation*}
where $\bar\theta^*:=\theta_\gamma^*(\bar y,\bar z)$ for the notational simplicity. This completes the proof of the proposition.
\end{proof}\vspace*{0.07in}

The preceding result demonstrates that the reformulation problem \eqref{VI-constraint_equivalent} constitutes a special case of the constrained difference programming problem \eqref{difference-programming}. This correspondence is established by setting $x: = (y,z)$, $X:= (Y \times \mathbb{R}^{m_2}) \cap \mathrm{gph}(\Gamma)$, and
\[
g_0(x):= \Psi_1(y,z), \quad h_0(x): = \Psi_2(y,z), \quad g_1(x): = \langle F(y,z),z\rangle,  \quad h_1(x): = \mu_\gamma(y,z).
\]  
Under Assumption~\ref{assumption1}(a,b), the reformulation \eqref{VI-constraint_equivalent} satisfies all the assumptions imposed on the constrained difference programming problem \eqref{difference-programming}. Consequently, the proposed Algorithm~\ref{alg:ESQM} (CDP-ESQM) can be directly applied to solve the reformulation problem \eqref{VI-constraint_equivalent}.

However, by employing the same proof technique as in \cite[Proposition~7]{ye2021difference}, it can be shown that NNAMCQ is {\em violated} at every feasible point of the reformulation problem \eqref{VI-constraint_equivalent}. This observation motivates the consideration of the following 
{\em $\epsilon$-relaxation problem} associated with \eqref{VI-constraint_equivalent}:
\begin{equation}\label{relaxation}
\begin{aligned}
\min_{ y \in Y, z\in \Gamma(y)}  ~~& 
\Psi(y,z)  \\
\text{subject to } ~~~~& \langle F(y,z),z\rangle - \mu_\gamma(y,z) \le \epsilon,
\end{aligned}
\end{equation}
where $\epsilon> 0$ is the relaxation parameter. The next proposition tells us that for any $\delta > 0$, there exists $\epsilon > 0$ such that we can find a local minimizer of the $\epsilon$-relaxation problem \eqref{relaxation}, which is $\delta$-close to the solution set of the reformulation problem \eqref{VI-constraint_equivalent}. The proof of this fact is a direct extension of that in \cite[Proposition~6]{ye2021difference}, and thus it is omitted.

\begin{proposition}\label{prop:prox}
Suppose that the solution set $\mathcal{S}^*$ of problem \eqref{VI-constrained} is nonempty and compact. Then for any $\delta > 0$ there exists $\bar{\epsilon} > 0$ such that for any $\epsilon \in (0,\bar{\epsilon}]$ we can find a pair $(y_{\epsilon},z_{\epsilon})$, which gives a local minimum to the $\epsilon$-relaxation problem \eqref{relaxation} and satisfies the condition $\mathrm{dist}((y_{\epsilon}, z_{\epsilon}),\mathcal{S}^*) < \delta$.
\end{proposition}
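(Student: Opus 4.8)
The plan is to localize the $\epsilon$-relaxation problem to a compact $\delta$-neighborhood of $\mathcal{S}^*$, analyze the limiting behavior of its minimizers as $\epsilon\to 0^+$, and then upgrade these minimizers to local minimizers of the full problem \eqref{relaxation}. Write $G(y,z):=\langle F(y,z),z\rangle-\mu_\gamma(y,z)$ for the constraint function and let $v^*$ denote the common optimal value of \eqref{VI-constrained} and its equivalent reformulation \eqref{VI-constraint_equivalent}, whose solution set is $\mathcal{S}^*$. Since $\mathcal{S}^*$ is compact, the set $C_\delta:=\{(y,z)\in X\mid \mathrm{dist}((y,z),\mathcal{S}^*)\le\delta\}$ is compact, where $X=(Y\times\mathbb{R}^{m_2})\cap\mathrm{gph}(\Gamma)$. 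First I would record that $\Psi=\Psi_1-\Psi_2$ and $G$ are continuous on $X$ (since $\Psi_1$ is $\mathcal{C}^{1,1}$, $\Psi_2$ and $\mu_\gamma$ are locally Lipschitz by Proposition~\ref{prop:prox-regularity}, and $F$ is continuous). Consequently, for each $\epsilon>0$ the restricted problem $\min\{\Psi(y,z)\mid (y,z)\in C_\delta,\ G(y,z)\le\epsilon\}$ has a nonempty compact feasible set — it contains $\mathcal{S}^*$, on which $G\equiv 0$ — and therefore attains a minimizer $(y_\epsilon,z_\epsilon)$.

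Next I would show that $\mathrm{dist}((y_\epsilon,z_\epsilon),\mathcal{S}^*)\to 0$ as $\epsilon\to 0^+$ for any selection of such minimizers. Because every point of $\mathcal{S}^*$ is feasible for the restricted problem with value $v^*$, one has $\Psi(y_\epsilon,z_\epsilon)\le v^*$. Arguing by contradiction, suppose there were $\eta>0$ and $\epsilon_k\to 0^+$ with $\mathrm{dist}((y_{\epsilon_k},z_{\epsilon_k}),\mathcal{S}^*)\ge\eta$. By compactness of $C_\delta$, pass to a subsequence with $(y_{\epsilon_k},z_{\epsilon_k})\to(\bar y,\bar z)\in C_\delta$. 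Continuity of $G$ together with $G(y_{\epsilon_k},z_{\epsilon_k})\le\epsilon_k\to 0$ yields $G(\bar y,\bar z)\le 0$; combined with the gap-function nonnegativity $G\ge 0$ on $\mathrm{gph}(\Gamma)$ from \cite[Theorem~3.1]{fukushima1992equivalent}, this forces $G(\bar y,\bar z)=0$, i.e.\ $\bar z\in S(\bar y)$, so $(\bar y,\bar z)$ is feasible for \eqref{VI-constraint_equivalent}. Hence $\Psi(\bar y,\bar z)\ge v^*$, while continuity gives $\Psi(\bar y,\bar z)=\lim_k\Psi(y_{\epsilon_k},z_{\epsilon_k})\le v^*$; therefore $\Psi(\bar y,\bar z)=v^*$ and $(\bar y,\bar z)\in\mathcal{S}^*$, contradicting $\mathrm{dist}((y_{\epsilon_k},z_{\epsilon_k}),\mathcal{S}^*)\ge\eta$ by continuity of the distance.

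Finally I would convert the restricted minimizer into a local minimizer of \eqref{relaxation}. The convergence just established provides $\bar\epsilon>0$ such that $\mathrm{dist}((y_\epsilon,z_\epsilon),\mathcal{S}^*)<\delta$ for every $\epsilon\in(0,\bar\epsilon]$. Since the distance function is $1$-Lipschitz, there is a radius $r>0$ with $\mathbb{B}_r(y_\epsilon,z_\epsilon)\cap X\subset C_\delta$. Then for every $(y,z)\in\mathbb{B}_r(y_\epsilon,z_\epsilon)$ feasible for \eqref{relaxation} — that is, $(y,z)\in X$ with $G(y,z)\le\epsilon$ — one has $(y,z)\in C_\delta$ feasible for the restricted problem, whence $\Psi(y_\epsilon,z_\epsilon)\le\Psi(y,z)$. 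Thus $(y_\epsilon,z_\epsilon)$ is a local minimizer of \eqref{relaxation} satisfying $\mathrm{dist}((y_\epsilon,z_\epsilon),\mathcal{S}^*)<\delta$, as required. I expect the main obstacle to be the feasibility-of-the-limit step: it is exactly here that the structural properties of the regularized gap function (its nonnegativity on $\mathrm{gph}(\Gamma)$ and the zero-level characterization of $S(y)$) are indispensable, since without them a limit of $\epsilon$-relaxed points need not solve the original VI. The interiority argument converting restricted minimality into genuine local minimality is then routine, but it hinges on the compactness of $\mathcal{S}^*$, which is what keeps $C_\delta$ bounded and the minimizers from escaping to infinity.
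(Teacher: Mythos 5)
Your argument is correct and is essentially the standard localization proof that the paper itself omits, deferring to \cite[Proposition~6]{ye2021difference}: restrict to the compact set $C_\delta$, extract minimizers of the restricted relaxed problem, use the nonnegativity and zero-level characterization of the regularized gap function to show limit points are feasible and hence optimal for \eqref{VI-constraint_equivalent}, and then use interiority in $C_\delta$ to upgrade to local minimality of \eqref{relaxation}. All the ingredients you invoke (continuity of $\mu_\gamma$ from Proposition~\ref{prop:prox-regularity}, compactness of $C_\delta$ from closedness of $(Y\times\mathbb{R}^{m_2})\cap\mathrm{gph}(\Gamma)$, and the gap-function properties from \cite[Theorem~3.1]{fukushima1992equivalent}) are available in the paper, so the proof goes through as written.
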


A significant advantage of the $\epsilon$-relaxation problem \eqref{relaxation} is that both NNAMCQ and ENNAMCQ are satisfied for it under certain conditions. We provide below a easily verifiable sufficient condition under which ENNAMCQ holds for the $\epsilon$-relaxation problem \eqref{relaxation}.

\begin{proposition} Let \( (\bar{y}, \bar{z}) \) satisfy $\bar{y} \in Y$ and $\bar{z} \in \Gamma(\bar{y})$. If the partial Jacobian matrix $\nabla_z F(\bar{y}, \bar{z})$ is positive-definite, then ENNAMCQ holds at \( (\bar{y}, \bar{z}) \) for the $\epsilon$-relaxation problem \eqref{relaxation} whenever $\epsilon >0$.
\end{proposition}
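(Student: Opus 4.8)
The plan is to verify ENNAMCQ straight from its definition after identifying the $\epsilon$-relaxation problem \eqref{relaxation} with \eqref{difference-programming} via $g_1(y,z)=\langle F(y,z),z\rangle$, $h_1(y,z)=\mu_\gamma(y,z)$, and $X=(Y\times\mathbb{R}^{m_2})\cap\mathrm{gph}(\Gamma)$. Since ENNAMCQ is automatic when $\varphi_1(\bar y,\bar z)<0$, the only case to treat is $\varphi_1(\bar y,\bar z)\ge 0$, where I must show the NNAMCQ inclusion $0\notin\nabla g_1(\bar y,\bar z)-\partial\mu_\gamma(\bar y,\bar z)+\mathcal N_X(\bar y,\bar z)$. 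I would argue by contradiction, assuming there exist $v\in\partial\mu_\gamma(\bar y,\bar z)$ and $n=(n_y,n_z)\in\mathcal N_X(\bar y,\bar z)$ with $\nabla g_1(\bar y,\bar z)-v+n=0$.

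First I record two facts. Writing $\bar\theta^*:=\theta_\gamma^*(\bar y,\bar z)=\mathrm{Proj}_{\Gamma(\bar y)}(\bar z-\gamma F(\bar y,\bar z))$, the definition of $\mu_\gamma$ gives $\langle F(\bar y,\bar z),\bar z\rangle-\mu_\gamma(\bar y,\bar z)=\langle F(\bar y,\bar z),\bar z-\bar\theta^*\rangle-\frac{1}{2\gamma}\|\bar z-\bar\theta^*\|^2$, which vanishes whenever $\bar z=\bar\theta^*$; since $\varphi_1(\bar y,\bar z)\ge 0$ forces $\langle F(\bar y,\bar z),\bar z\rangle-\mu_\gamma(\bar y,\bar z)\ge\epsilon>0$, I conclude $\bar z\ne\bar\theta^*$. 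Second, because $(\bar y,\bar\theta^*)\in X$ (as $\bar y\in Y$ and $\bar\theta^*\in\Gamma(\bar y)$) and $X$ is convex, testing the normal-cone inequality for $n$ at this feasible point gives $\langle n_z,\bar z-\bar\theta^*\rangle\ge 0$; crucially, this sidesteps any explicit description of $\mathcal N_X$.

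The core computation is confined to the $z$-component. Using $\nabla_z g_1=F(\bar y,\bar z)+\nabla_z F(\bar y,\bar z)^T\bar z$ and the formula for $\partial\mu_\gamma$ from Proposition~\ref{prop:prox-regularity}, whose $z$-component is $v_z=\nabla_z F(\bar y,\bar z)^T\bar\theta^*+(\bar z-\bar\theta^*)/\gamma$ subject to $F(\bar y,\bar z)+(\bar\theta^*-\bar z)/\gamma+\sum_i\nu_i\nabla_z\zeta_i(\bar y,\bar\theta^*)=0$ for some $\nu\ge 0$ with $\sum_i\nu_i\zeta_i(\bar y,\bar\theta^*)=0$, I substitute $n_z=v_z-\nabla_z g_1$ and eliminate $F(\bar y,\bar z)$ via the constraint. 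The $(\bar z-\bar\theta^*)/\gamma$ terms cancel, leaving $n_z=-\nabla_z F(\bar y,\bar z)^T(\bar z-\bar\theta^*)+\sum_i\nu_i\nabla_z\zeta_i(\bar y,\bar\theta^*)$. Pairing with $\bar z-\bar\theta^*$ yields $\langle n_z,\bar z-\bar\theta^*\rangle=-(\bar z-\bar\theta^*)^T\nabla_z F(\bar y,\bar z)(\bar z-\bar\theta^*)+\sum_i\nu_i\langle\nabla_z\zeta_i(\bar y,\bar\theta^*),\bar z-\bar\theta^*\rangle$. The first term is strictly negative by positive-definiteness of $\nabla_z F(\bar y,\bar z)$ together with $\bar z\ne\bar\theta^*$; the second is nonpositive, since convexity of each $\zeta_i$ in $z$ gives $\langle\nabla_z\zeta_i(\bar y,\bar\theta^*),\bar z-\bar\theta^*\rangle\le\zeta_i(\bar y,\bar z)-\zeta_i(\bar y,\bar\theta^*)$, and then $\nu_i\ge 0$, $\zeta_i(\bar y,\bar z)\le 0$, and the complementarity $\sum_i\nu_i\zeta_i(\bar y,\bar\theta^*)=0$ force $\sum_i\nu_i\langle\nabla_z\zeta_i(\bar y,\bar\theta^*),\bar z-\bar\theta^*\rangle\le 0$. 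Hence $\langle n_z,\bar z-\bar\theta^*\rangle<0$, contradicting $\langle n_z,\bar z-\bar\theta^*\rangle\ge 0$.

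The hard part is conceptual rather than computational: recognizing that the contradiction lives entirely in the $z$-component and choosing the test point $(\bar y,\bar\theta^*)$ for the normal cone, which simultaneously produces the sign condition $\langle n_z,\bar z-\bar\theta^*\rangle\ge 0$ and is exactly the direction along which positive-definiteness of $\nabla_z F$ bites. The remaining steps—extracting the $z$-components of $\nabla g_1$ and the subdifferential, using the defining relation of $\bar\theta^*$ to cancel the $F$ and $1/\gamma$ terms, and disposing of the $\zeta_i$ contributions by convexity and complementarity—are routine bookkeeping. I would also note that invoking the $\partial\mu_\gamma$ formula presupposes the normal-cone representation \eqref{thm_proxreg_eq5} at $(\bar y,\bar\theta^*)$, which holds under the standing qualification on the $\zeta_i$ (e.g.\ Mangasarian--Fromovitz).
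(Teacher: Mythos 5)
Your proof is correct and follows essentially the same route as the paper's: argue by contradiction, isolate the $z$-component of the stationarity relation, combine it with the optimality condition defining $\bar\theta^*=\theta_\gamma^*(\bar y,\bar z)$ so that the $F$ and $1/\gamma$ terms cancel, and let positive-definiteness of $\nabla_z F(\bar y,\bar z)$ act on the direction $\bar z-\bar\theta^*$ to force $\bar z=\bar\theta^*$ and hence a vanishing gap, contradicting the $\ge\epsilon$ assumption. The only cosmetic difference is that you invoke the explicit multiplier formula for $\partial\mu_\gamma$ (hence the representation \eqref{thm_proxreg_eq5}) and re-derive the sign of the $\zeta_i$-contribution from convexity and complementarity, and you obtain the inequality $\langle n_z,\bar z-\bar\theta^*\rangle\ge 0$ by testing $\mathcal N_X$ at $(\bar y,\bar\theta^*)$, whereas the paper works with the abstract cones $\mathcal N_{\Gamma(\bar y)}(\bar z)$ and $\mathcal N_{\Gamma(\bar y)}(\bar\theta^*)$ and their monotonicity, which avoids assuming \eqref{thm_proxreg_eq5}.
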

\begin{proof}\,
Denoting $\bar{\theta}^* =  \theta_\gamma^*(\bar{x}, \bar{y}) $ as above, suppose on the contrary that ENNAMCQ fails at $(\bar{y}, \bar{z})$. Then we have $ \langle F(\bar{y}, \bar{z}), \bar{z}\rangle - \mu_\gamma(\bar{y}, \bar{z}) \ge \epsilon$ and 
\[
0 \in \begin{bmatrix}
\nabla_y F(\bar{y}, \bar{z})^T\bar{z} \\ \nabla_z F(\bar{y}, \bar{z})^T\bar{z} + F(\bar{y}, \bar{z})
\end{bmatrix}
 - \partial \mu_\gamma(\bar{y}, \bar{z}) + \mathcal{N}_{\mathrm{gph}(\Gamma)}(\bar{y}, \bar{z}).
\]
It follows from \eqref{thm_proxreg_eq4} and the obvious inclusion $\mathcal{N}_{\mathrm{gph}(\Gamma)}(\bar{y}, \bar{z}) \subset \mathbb{R}^{m_1} \times \mathcal{N}_{\Gamma(\bar{y})}(\bar{z})$ that
\begin{equation}\label{prop_cq_eq1}
0 \in  \nabla_z F(\bar{y}, \bar{z})^T\bar{z} + F(\bar{y}, \bar{z}) -  \nabla_z F(\bar{y}, \bar{z})^T\bar{\theta}^* - (\bar{z} - \bar{\theta}^* )/\gamma + \mathcal{N}_{\Gamma(\bar{y})}(\bar{z}).
\end{equation}
By the standard first-order necessary optimality condition for the optimization problem defining $\bar{\theta}^*$ in \eqref{def_theta}, we get the inclusion
\begin{equation}\label{prop_cq_eq2}
0 \in F(\bar{y}, \bar{z}) + (\bar{\theta}^* - \bar{z})/\gamma + \mathcal{N}_{\Gamma(\bar{y})}( \bar{\theta}^*  ).
\end{equation}
Combining \eqref{prop_cq_eq1} and \eqref{prop_cq_eq2} and using the monotonicity of the normal cone mapping $\mathcal{N}_{\Gamma(\bar{y})}$ associated with the convex set $\Gamma(\bar y)$ give us the inequality
\begin{equation}\label{monot}
\left\langle \nabla_z F(\bar{y}, \bar{z})^T \left(\bar{z} - \bar{\theta}^*\right), \bar{z} -   \bar{\theta}^* \right\rangle \le 0.
\end{equation}
Since \( \nabla_z F(\bar{y}, \bar{z}) \) is positive-definite, \eqref{monot} yields $\bar{z} = \bar{\theta}^*$ and leads us to $\langle F(\bar{y}, \bar{z}), \bar{z}\rangle - \mu_\gamma(\bar{y}, \bar{z}) = 0$, which contradicts the assumption that $\langle F(\bar{y}, \bar{z}), \bar{z}\rangle - \mu_\gamma(\bar{y}, \bar{z}) \ge \epsilon$ and thus completes the proof.
\end{proof}

\section{Applications to Continuous Network Design with Numerical Experiments}\label{sec:num}
\setcounter{equation}{0}

This section tests the numerical performance of Algorithms~\ref{alg:ESQM} (CDP-ESQM) in the case of VI-constrained  difference programming. We apply {this algorithm} to the {\em continuous network design problem} (CNDP) in \cite{abdulaal1979continuous}, which is a model of practical value. The effectiveness of the new {algorithm} is demonstrated by numerical experiments by comparing {its} performance with the existing benchmarks for CNDP provided in \cite{suwansirikul1987edo}. We also compare {our algorithm} with those developed for mathematical programs with complementarity constraints (MPCCs) to evaluate their computational efficiency.  All the experiments were conducted in MATLAB on a laptop with Intel i7-1260 CPU (2.10 GHz) and 32 GB of RAM. Subproblems arising in the iterative process were solved by using MATLAB's built-in optimization toolboxes; namely, \texttt{quadprog} and \texttt{fmincon}, with the interior-point solver.

Following \cite{abdulaal1979continuous,marcotte1986network}, the CNDP model can be mathematically formulated as:
\begin{equation}\label{problem:CNDP}
\begin{aligned}
\min\limits_{y\geq0,\, v\in\Omega}\, &  \sum\limits_{a \in \mathcal{A}} t_a(y_a, v_a) v_a 
+G_a(y_a) \\
\text{subject to } &  v \in \mathcal{S}(y):=\left\{
v\in\Omega\,\middle|\, 
\langle t(y,v),v'-v\rangle\geq0,\ \forall v'\in\Omega
\right\},
\end{aligned}
\end{equation}
where \(\mathcal{A}\) is the set of directed links, 
\(y = (y_a)_{a \in \mathcal{A}}\) represents the vector of capacity expansion variables, 
and \(v = (v_a)_{a \in \mathcal{A}}\) denotes the vector of link flow variables. 
The travel time cost function is denoted by \(t_a(y_a, v_a)\), 
and \(G_a(y_a)\) represents the cost function associated with capacity expansion 
for each link \(a \in \mathcal{A}\). Moreover, the set $\Omega$ in \eqref{problem:CNDP} is defined by
\begin{equation}
\Omega:=\left\{v'=\Delta h\,\middle|\,\Lambda h=r,\,h\geq0\right\},
\end{equation}
where \(r\) is the vector of origin-destination demands. 

We evaluate the performance of CDP-ESQM on the network described in {\cite[Figure 3 and Table VI]{suwansirikul1987edo}}. Specifically, this network consists of 16 links implying that \( y, v \in \mathbb{R}^{16} \). For each \( a \in \mathcal{A} \), the cost functions are defined by
\begin{equation} t_a(y_a,v_a): = A_a + B_a \left(\frac{v_a}{K_a + y_a}\right)^4, \quad G_a(y_a) = D_a y_a,
\end{equation}
where the vectors \( A, B, K, D \) are taken from {\cite[Table VI]{suwansirikul1987edo}}. The matrices \( \Delta \) and \( \Lambda \) are obtained from {\cite[Figure~3]{suwansirikul1987edo}}, where \( \Delta \) is a \( 16 \times 16 \) binary matrix with elements 0 and 1 and \( \Lambda \) is given by
\begin{equation}
\Lambda: = 
\begin{bmatrix}
\mathbf{1}_8  & \mathbf{0}_8\\
\mathbf{0}_8  & \mathbf{1}_8
\end{bmatrix}^T.
\end{equation}

For CDP-ESQM in the network setting of {\cite[Figure~3 and Table~VI]{suwansirikul1987edo}}, the initial values were set as $y^0 = \mathbf{0}_{16}, v^0 = \frac{1}{8}\Delta(r \otimes \mathbf{1}_8 )$, and the common parameters were chosen as {$p_0=10, \varrho_p=0.1, c_p=5, \gamma=1, \epsilon =1, \beta =0.5,\sigma=0.5$}. {To investigate the algorithmic performance under different second-order approximations, we conducted two sets of experiments for each demand scenario by employing distinct strategies for the matrix $G_k$. The first variant, CDP-ESQM with a scaled identity matrix, employs  $G_k = \alpha \mathbf{I}$ with $\alpha = 10$. The second variant, CDP-ESQM with a regularized exact Hessian of the objective function, takes the form $G_k = \nabla^2 \varphi_0(y^k,v^k) + \alpha \mathbf{I}$.} The stopping criterion for both variants was defined as $\|d^k\|\leq {10^{-4}}$ with the constraint violation measure
\begin{equation}
\kappa_k=\left\langle t(y^k,v^k),v^k\right\rangle-\min\limits_{v\in\Omega}\left\{\left\langle t(y^k,v^k),v\right\rangle+\frac{1}{2\gamma}\left\|v-v^k\right\|^2\right\}\leq {\epsilon}.
\end{equation}

To evaluate the effectiveness of CDP-ESQM, we compare its numerical results with several existing benchmarks in the CNDP framework including the Modular In-Core Nonlinear Optimization System (MINOS), the Hooke-Jeeves (H-J) method, the Equilibrium Decomposed Optimization (EDO), and the Iterative Optimization-Assignment (IOA). The benchmark results are drawn from \cite[Table VII]{suwansirikul1987edo}, which also provides descriptions of these baseline methods.

In this application, the objective is to find an optimal value of the upper-level variable 
$y$ (the capacity expansion vector) such that, after solving the lower-level VI in CNDP for the link flow vector $v \in \mathcal{S}(y)$, the resulting pair $(y,v)$ yields a small value of the upper-level objective function (the total network cost).
By design, the benchmark algorithms (MINOS, H-J, EDO, and IOA) generate feasible pairs $(y,v)$ that satisfy this VI constraint $v \in \mathcal{S}(y)$, and their reported objective values are evaluated exactly at these feasible points.
To ensure a fair comparison, the final objective value for CDP-ESQM is computed using a two-step procedure. First, the capacity expansion vector $y$ obtained at the termination of CDP-ESQM is recorded. Then, the corresponding VI problem in the CNDP is solved to determine the link flow vector 
$v$. This gives a feasible pair $(y,v)$ for the CNDP. Finally, the total cost (i.e., the objective value) is calculated by substituting the obtained $y$ and $v$ into the upper-level objective function.

\begin{table}[htbp]
\centering
\caption{Objective values for different methods under three demand scenarios}
\label{tab:Comparison of Methods for Demand Scenarios}
\small 
\begin{tabular}{@{}lccc@{}}
\toprule
\textbf{Algorithm} & $r=(2.5,5.0)^T$ & $r=(5.0,10.0)^T$ & $r=(10.0,20.0)^T$ \\ 
\midrule
CDP-ESQM (Scaled Identity)   & {90.37}    & \bf{199.55}    &  \bf{528.24}    \\
CDP-ESQM (Regularized Hessian)  & {90.40}    & \bf{199.72}    & \bf{527.98}    \\
MINOS               & 92.10       & 211.25      & 557.14       \\
H-J                 & 90.10       & 215.08      & 557.22       \\
EDO                 & 92.41       & 201.84      & 540.74       \\ 
IOA                 & 100.25      & 210.86      & 556.61       \\ 
\bottomrule
\end{tabular}
\end{table}

Table~\ref{tab:Comparison of Methods for Demand Scenarios} summarizes the objective values computed under three different origin-destination demand scenarios \( r \), with the results presented in rows three to six sourced from \cite[Table VII]{suwansirikul1987edo}.  
From Table~\ref{tab:Comparison of Methods for Demand Scenarios}, it can be observed that in the mid- and high-demand scenarios, \( r=(5.0,10.0)^T \) and \( r=(10.0,20.0)^T \), the proposed algorithm achieves lower objective values with reducing them by 2.12 and {12.76}, respectively, compared to the best benchmark results.

To test the efficiency of CDP-ESQM, we compare its performance with the MPCC approach in optimization. The MPCC reformulation rewrites the constraints of CNDP as the following form of the KKT conditions:
\[
\Delta^T t(y,v) - \Lambda^T \mu \geq 0, \quad h \geq 0, \quad \langle \Delta^T t(y,v) - \Lambda^T \mu, h \rangle = 0, \quad \Delta h = v, \quad \Lambda h = r.
\] 
This reformulation is solved by using the MPCC approach with a relaxation parameter of 0.01 for the complementarity constraints.

We compare the objective values and feasibility as metrics to assess the degree of constraint violation during the iterative process. 
The objective values for both CDP-ESQM and the MPCC approach are calculated using the two-step procedure described above, consistent with the results presented in Table~\ref{tab:Comparison of Methods for Demand Scenarios}.
Feasibility at each iteration \( k \) is computed by using the following formula:
\[
\frac{\max\{0,\kappa_k\}}{\max\{1,\kappa_0\}}.
\]

We present the evolution of both the objective function values and feasibility over the iterations for the largest demand scenario. The results in 
Figures~\ref{fig:performance_combined}
highlight the efficiency of both CDP-ESQM variants (i.e., employing scaled identity and regularized Hessian matrices) compared to the MPCC approach. 
Both proposed variants reach the optimal objective value significantly faster and achieve tighter feasibility tolerances,
while exhibiting smoother and more stable convergence.

\begin{figure}[htbp]
\centering
   
\begin{subfigure}[b]{0.48\textwidth}
\centering
\includegraphics[width=\textwidth]{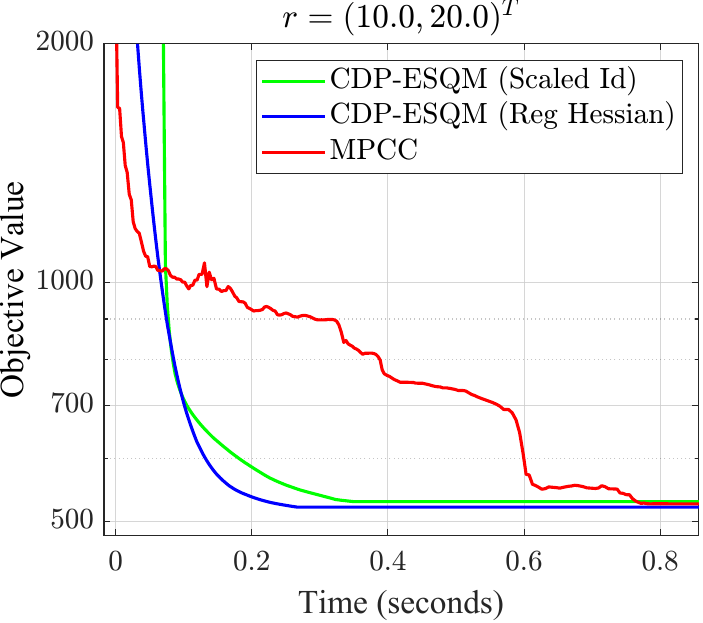}
\caption{Objective values for \( r=(10.0,20.0)^T \).}
\label{fig:obj_high}
\end{subfigure}
\hfill
\begin{subfigure}[b]{0.48\textwidth}
\centering
\includegraphics[width=\textwidth]{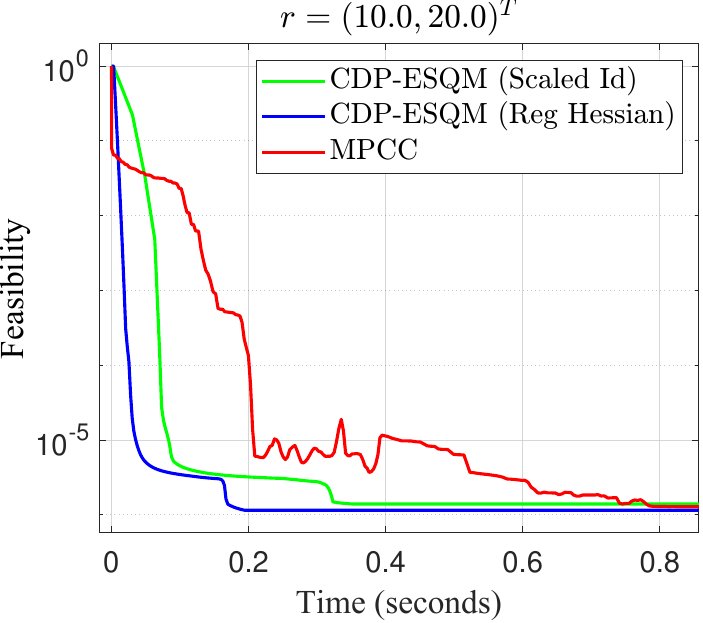}
\caption{Feasibility for \( r=(10.0,20.0)^T \).}
\label{fig:feas_high}
\end{subfigure}
    
\caption{Performance comparison of objective values and feasibility for demand scenario  \( r=(10.0,20.0)^T \).}
\label{fig:performance_combined}
\end{figure}

Finally, we analyze the behavior of the penalty parameter sequence \(\{p_k\}\) in CDP-ESQM. We do not set a stopping criterion.  The parameters 
remain unchanged. As illustrated by 
Figure~\ref{fig:penalty_stability} in the given demand scenario \(r = (10.0, 20.0)^T\), the penalty parameter \(p_k\) eventually converges to a constant value demonstrating in this way the stability of the proposed algorithms in the later iterations.

\begin{figure}[htbp]
    \centering
    \includegraphics[width=1.0\textwidth, height=4.5cm]{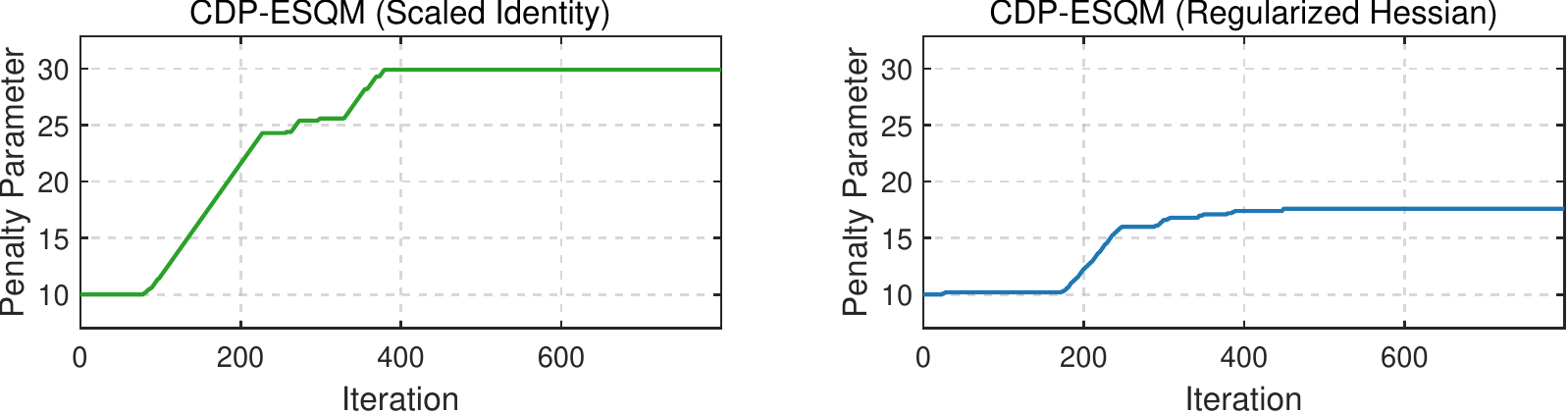}
    \caption{Evolution of the penalty parameter over iterations for demand scenario $r=(10.0, 20.0)^T$.}
    \label{fig:penalty_stability}
\end{figure}

\section{Concluding Remarks and Future Research}\label{conc}

This paper designs and justifies a novel algorithm of the generalized SQP type to solve a new class of constrained difference programming. The developed algorithm is implemented for difference programming problems with variational inequality constraints and then is applied to practical continuous network design models. For the latter class of models, numerical experiments were performed, and the computation results were compared with those obtained by using other numerical techniques.

Among directions of our future research, we mention investigating {\em convergence rates} for  CDP-ESQM. In particular, we plan to employ the exponential PLK conditions from \cite{bento2025convergence}
to determine exponent values ensuring the finite termination as well as linear and polynomial convergence of the designed algorithms. Another important area of the future research is considering problems of the constrained difference 
programming of type \eqref{difference-programming} (and the associated problems with variational inequality constraints) in which the functions $g_0$ and $g_i$, $i \in \mathcal{I}$ are {\em not of class ${\cal C}^{1,1}$} and even {\em not differentiable} on the set in question. Some developments in this direction have been recently done in \cite{ferreira} for the boosted difference of convex algorithm (BDCA) \cite{av20} in problems of DC programming.

\end{document}